\documentclass[12pt]{amsart}
\usepackage{amsfonts}
\usepackage{amssymb}
\usepackage{color}
\usepackage{dsfont}
\usepackage{setspace}
\usepackage[fleqn,tbtags]{mathtools}
\usepackage{microtype}
\numberwithin{equation}{section}
\theoremstyle{plain}
 \newtheorem{theorem}{Theorem}[section]
 \newtheorem{lemma}[theorem]{Lemma}
 \newtheorem{corollary}[theorem]{Corollary}
 \newtheorem{proposition}[theorem]{Proposition}

\theoremstyle{definition}
 \newtheorem{definition}[theorem]{Definition}
 \newtheorem{example}[theorem]{Example}
 \newtheorem{remark}[theorem]{Remark}
\newcommand{\cF}{\mathcal{F}}

\newcommand{\One}{\mathds{1}}

\allowdisplaybreaks

\begin{document}
\author[David Berger]{David Berger$^{*}$}
\thanks{$^{*}$Institute of Mathematical Stochastics, TU Dresden, Zellescher Weg 12--14, 01069 Dresden, Germany.
Email: \texttt{david.berger2@tu-dresden.de}.}

\author[Maximilian Strobel]{Maximilian Strobel$^{\dagger}$}
\thanks{$^{\dagger}$Institute of Mathematical Finance, Ulm University, Helmholtzstr. 18, 89081 Ulm, Germany.
Email: \texttt{maximilian.strobel@uni-ulm.de}.}
\title{Random sampling of almost periodic stochastic processes}
\vspace{5mm}
\begin{abstract}
We study the effect of random sampling on stochastic processes whose finite-dimensional distributions are Bohr almost periodic under time shifts. The sampling times are generated independently by a two-sided random walk with positive increments or by a two-sided
subordinator. Under a non-degeneracy condition we show that the shifted finite-dimensional distributions of the sampled process converge to those of a strictly stationary process. Consequently, if the sampled process is itself almost periodic in distribution, then it must already be strictly stationary. We further prove an $L^2$-law of large numbers for sampled averages under covariance decay and a uniform second-moment bound, and identify its limit with the mean of the stationary limit process. Finally, for $m$-dependent, $L^2$-uniformly integrable almost periodic processes, we establish a central limit theorem for the centered sampled sums, whose asymptotic variance is given by the autocovariance series of the stationary limit.
\end{abstract}
\maketitle
\section{Introduction}
Throughout the paper, $\mathbb{G}$ denotes either $\mathbb{Z}$ or $\mathbb{R}$. We study the effect of random sampling on stochastic processes that are almost periodic in distributional sense. Given a process $X=(X_t)_{t\in \mathbb{R}}$ and an independent random sampling process $Y=(Y_g)_{g\in \mathbb{G}}$, our object of interest is the sampled process $(X_{Y_g})_{g\in \mathbb{G}}$. The basic question is whether almost periodicity survives under random evaluation, and what asymptotic distributional properties emerge after sampling.\\
The classical theory of periodically and almost periodically correlated
processes, introduced by Gladyshev \cite{Gladyshev1963} and developed further by Hurd \cite{Hurd1991}, requires the mean and covariance function to be almost periodic in time. We work with a different distributional formulation, which has been studied and characterised by Berger and Mohamed \cite{BM-22}. We call the process $X$ almost periodic in distributional sense, if for every $n\in\mathbb N$ and every $g_1,\dots,g_n\in \mathbb{R}$, the map
\begin{align*}
    \mathbb{R}\ni g\mapsto \mathcal L(X_{g_1+g},\dots,X_{g_n+g})
\end{align*}
is Bohr almost periodic (c.f. Definition \ref{def_bohr_ap} and \ref{def_bohr_ap_proc}), where $\mathcal{L}(\cdot)$ denotes the law of a random vector. Thus, our stationarisation result concerns the full
finite-dimensional laws, rather than only first- and second-order
characteristics.\\
Random translations have previously been used to obtain stationary processes from nonstationary ones. Hurd \cite{Hurd1974} showed that an independent shift uniformly distributed over one period stationarises a process with periodic finite-dimensional distributions. Iksanov, Kabluchko and Marynych \cite{IksanovKabluchkoMarynych2025} obtained stationary limits from increasingly spread-out uniform translations of deterministic Besicovitch almost periodic functions, including finite-dimensional convergence under additional spectral assumptions.
In our work, the sampling mechanism is different, since a random sampling randomises the spacings between observations, instead of preserving it, as a common random translation would do. In our first main result, we establish a stationarisation mechanism under random sampling of a process which is almost periodic in distributional sense.
More precisely, Theorem \ref{Theorem_stat_Z} shows that, under the assumption that $\mathcal{L}(Y_1)$ is not a lattice distribution, we have
\begin{align*}
     (X_{Y_{g_1+g}},\ldots,X_{Y_{g_n+g}})\underset{|g|\to\infty}{\overset{d}{\longrightarrow}}(Z_{g_1},\ldots,Z_{g_n})
\end{align*}
for every $g_1<\cdots<g_n$ in $\mathbb{G}$, where $Z=(Z_g)_{g\in \mathbb{G}}$ is strictly
stationary. No moment or dependence assumptions on $X$ are needed for this
result. In contrast to stationarisation by a uniformly distributed shift, as done in \cite{Hurd1974}, our conclusion is initially asymptotic. Corollary \ref{cor_ap_stat} identifies when it becomes exact: the sampled process is almost periodic in distributional sense if and only if it is already strictly stationary. Consequently, nonstationary almost periodic behaviour cannot be preserved by random sampling as covered in this corollary.\\
Laws of large numbers for periodically and almost periodically correlated
processes were established by Cambanis, Houdr\'e, Hurd and Le\'skow \cite{CambanisHoudreHurdLeskow1994}. Our Theorem \ref{LLN_sampled} concerns an $L_2$ law of large numbers for the sampled process $(X_{Y_g})_{g\in\mathbb{G}}$. Specifically, write $f_X(t)=\mathbb E(X_t)$ and let $M(f_X)$ denote its Bohr mean (c.f. Lemma \ref{lemma1}). Under a covariance decay condition and a uniform second-moment bound, we obtain
\begin{align*}
    \frac1n\sum_{k=1}^nX_{Y_k}\underset{n\to\infty}{\overset{L_2}{\longrightarrow}}
     M(f_X)=\mathbb E Z_0.
\end{align*}
This result connects averages along the sampling sequence with the stationary limit from Section 3.\\
Mean estimation from random observations has also been studied directly. Dehay and Dudek \cite{DehayDudek2015} studied estimation of the Fourier-Bohr coefficients $\alpha_X(\lambda)=M(t\mapsto e^{-i\lambda t}f_X(t))$ for almost periodically correlated processes observed at the jump times of an independent homogeneous Poisson process. They established asymptotic normality of the estimation error and consistency of a circular block bootstrap procedure. Our law of large numbers is related to this estimation problem: since $M(f_X)$ is the zero-frequency Fourier-Bohr coefficient of the mean function, Theorem \ref{LLN_sampled} establishes mean-square consistency of the sampled average for this coefficient.\\
For second-order characteristics, Monsan \cite{Monsan1994} and Dehay and Monsan \cite{DehayMonsan1996} constructed consistent spectral density estimators from random observations. Dehay and Monsan \cite{DehayMonsan2007} established consistency and asymptotic normality of spectral covariance estimators under jittered periodic sampling, while Dehay and Dudek \cite{DehayDudek2017} studied bootstrap methods for second-order analysis under Poisson sampling. These works concern inference for covariance and spectral characteristics. In this paper, we instead identify the limiting covariance structure of the sampled process and use it to determine the variance in our central limit theorem.\\
Central limit theorems for randomly sampled sums have been established in several stationary settings. Guillotin-Plantard and Prieur \cite{GuillotinPlantardPrieur2010} prove a central limit theorem for weakly dependent sequences sampled by a transient integer-valued random walk. Brandes and Curato [3, Theorem 3.2] proved a central limit theorem for the sample mean of a stationary Lévy-driven moving-average process observed at independent renewal times. Brandes, Curato and Stelzer \cite{BrandesCuratoStelzer2023} obtained inheritance results for strong mixing and weak dependence under renewal sampling of strictly stationary processes. Our Theorem \ref{CLT_sampled} needs the original process to be almost periodic in distribution, but not necessarily stationary. We assume $m$-dependence and $L_2$-uniform integrability, and establish a Gaussian limit for the centered sums. The asymptotic variance is identified through the stationary limit from Section 3. Other works on central limit theorems of processes, which are almost periodic in distributional sense include Berger and Mohamed \cite{BM-22}, who obtained a central limit theorem for unsampled, $L_2$-uniformly integrable, $m$-dependent processes, which are almost periodic in distributional sense.\\
The paper is organized as follows. Section 2 collects the required
almost periodicity and sampling tools. Section 3 establishes the stationary
limit and the consequent rigidity of almost periodicity under sampling.
Section 4 proves the law of large numbers and the central limit theorem for
sampled processes.
\section{Preliminaries}
We first state conventions, notations and definitions, which are used frequently throughout this paper.
\begin{enumerate}
    \item Let $(M,d)$ be a metric space. For $r>0$ and $x\in M$, we denote $B_r(x):=\{y\in M: d(x,y)<r\}$, i.e. the open ball around $x$ with radius $r$. With $\overline{B_r(x)}$ we denote the closure of $B_r(x)$.
    \item For a metric space $(M,d)$ we denoty by $C_b(\mathbb{R},M)$ the space of continuous, bounded functions from $\mathbb{R}$ to $M$, endowed with the supremum norm. With $C(\mathbb{R}, M)$ we denote the space of continuous functions from $\mathbb{R}$ to $M$.
    \item For a random vector $X$ we denote its distribution by $\mathcal{L}(X)$ and its Fourier transform, i.e. its characteristic function, by $\widehat{\mathcal{L}}(X)$, with the convention that $\widehat{\mathcal{L}}(X)(z)=\mathbb{E}(e^{iz^TX})=\int_{\mathbb{R}^n}e^{iz^T x}\, \mathcal{L}(X)(dx)$.
    \item We write $X\overset{d}{=}Y$ for two random vectors $X$ and $Y$ having the same distribution.
    \item Similarly for a sequence of random vectors $(X_n)_{n\in\mathbb{N}}$, we denote convergence in distribution to a random vector $X$ by $X_n\underset{n\to\infty}{\overset{d}{\longrightarrow}}X$. Moreover $X_n\underset{n\to\infty}{\overset{L_p}{\longrightarrow}}X$ denotes convergence in $L_p(P)$ for $p\geq 1$, where $P$ is the underlying probability measure.
    \item We call a family of random variables $(X_t)_{t\in J}$, $J\subset \mathbb{R}$, $L_p$-uniformly integrable, if for all $\varepsilon>0$ there exists a $K>0$ such that
    \begin{align*}
        \sup\limits_{t\in J}\mathbb{E}(|X_t|^p\One_{|X_t|>K})<\varepsilon,
    \end{align*}
    where $\One_{A}$ denotes the indicator function of a set $A$.
    \item A stochastic process $X=(X_t)_{t\in \mathbb{R}}$ is called strictly stationary, if for all $t_1,...,t_n,s\in \mathbb{R}$ it holds
    \begin{align*}
        (X_{t_1},...,X_{t_n})\overset{d}{=}(X_{t_1+s},...,X_{t_n+s}).
    \end{align*}
    \item If not specified otherwise, we will from here on assume that the introduced random variables and stochastic processes are one-dimensional and real valued.  
\end{enumerate}
Let us recall the notion of almost periodic functions.
\begin{definition}\label{def_bohr_ap}
    Let $f:\mathbb{R}\to M$ be continuous, where $(M,d)$ is a complete metric space. We say that $f$ is \emph{almost periodic in the sense of Bohr} if $\{f(\cdot+g):g\in \mathbb{R}\}$ is relatively compact in the space $C_b(\mathbb{R},M)$. We denote the space of functions, which are almost periodic in the sense of Bohr, with $AP(\mathbb{R},M)$. In particular $AP(\mathbb{R}, M)\subset C_b(\mathbb{R}, M)$.
\end{definition}
\noindent
\begin{remark}\label{remark_equiv_bohr_ap}
    \begin{enumerate}
    \item $f\in AP(\mathbb{R},M)$ if and only if $f$ is continuous and for all $\varepsilon>0$ there exists $L_\varepsilon>0$ and $\tau(a,\varepsilon)\in [a,a+L_\varepsilon]$ for all $a\in\mathbb{R}$ such that
    \begin{align*}
        d(f(x+\tau), f(x))<\varepsilon \qquad \forall x\in \mathbb{R}.
    \end{align*}
    For a full proof, see e.g. \cite[Theorem 3.2]{Corduneanu2009}. The above property is also called \emph{almost periodic in the sense of Bochner}.
    \item If $M=\mathbb{C}$, then $AP(\mathbb{R},\mathbb{C})$ is the uniform closure of all $\mathbb{C}$-valued trigonometric polynomials, i.e. 
    \begin{align*}
        AP(\mathbb{R},\mathbb{C})=\overline{\mathcal{T}}^\infty,
    \end{align*}
    with
    \begin{align*}
        \mathcal{T}:=\{f:\mathbb{R}\to\mathbb{C}:f(t)=\sum_{k=1}^n\alpha_ke^{i\lambda_kt} \text{ for some }n\in\mathbb{N}, \alpha_1,...,\alpha_n\in\mathbb{C}, \lambda_1,...,\lambda_n\in\mathbb{R}\},
    \end{align*}
    and $||f||_\infty:=\sup\limits_{t\in\mathbb{R}}|f(t)|$ (see e.g. \cite[Proposition 3.20 and Theorem 4.8]{Corduneanu2009}).\\
    \end{enumerate}
\end{remark}

For $\mathbb{G}=\mathbb{R}$ we denote by $(Y_t)_{t\ge 0}$ a subordinator (i.e. an $\mathbb{R}$-valued Lévy process with increasing sample paths, c.f. \cite[Definition 13.17]{BrockwellLindner2024}) and define a two-sided extension of $Y$ by taking an independent, but identically distributed subordinator $(Y'_{t})_{t\ge 0}$ and setting
\begin{align*}
Y_t:=\begin{cases}
Y_t,\,\textrm{ for } t\ge 0,\\
-Y'_{(-t)-},\,\textrm{ for }t<0,
\end{cases}
\end{align*}
where $Y'_s=\lim\limits_{u\uparrow s}Y'_u$. We then call the new process $Y=(Y_t)_{t\in\mathbb{R}}$ a \emph{random sampling}. If $\mathbb{G}=\mathbb{Z}$, we take a sequence of positive, independent and identically distributed random variables $(\psi_i)_{i\in\mathbb{Z}\setminus\{0\}}$ and define
\begin{align*}
Y_k=\begin{cases}
\sum\limits_{i=1}^k \psi_i,\textrm{ for }k\geq 1,\\
0,\textrm{ for }k=0,\\
\sum\limits_{i=1}^{-k} -\psi_{-i},\textrm{ for }k\leq -1.
\end{cases}
\end{align*}
This defines a \emph{random sampling} in $\mathbb{Z}$.\\
Throughout the remainder of the paper we will assume that a random sampling $Y=(Y_g)_{g\in\mathbb{G}}$ satisfies
\begin{align}\label{eq_lattice_sampling}
    |\widehat{\mathcal{L}}(Y_1)(z)|\ne 1 \qquad \forall z\ne 0.
\end{align}
Note that $|\widehat{\mathcal{L}}(Y_1)(z_0)|=1$ for $z_0\ne 0$ is equivalent to $\mathcal{L}(Y_1)$ being a lattice distribution, i.e. there exists an $a\in\mathbb{R}$ such that $P\left(Y_1\in a+ \frac{2\pi}{|z_0|}\mathbb{Z}\right)=1$ (see e.g. \cite[Theorem 2.1.4]{Lukacs1970}). So our assumption is equivalent to $Y_1$ not being a lattice distribution.\\
We first prove some lemmata, which will mainly be used as tools for the coming sections. The first lemma shows that the sampled expectation asymptotically recovers the Bohr mean $M(f):=\lim\limits_{t\to\infty}\frac{1}{2t}\int_{(-t,t)}f(s)\,ds$ of an almost periodic function $f$. 
\begin{lemma}\label{lemma1}
    Let $f\in AP(\mathbb{R}, \mathbb{C})$. Assume $Y=(Y_g)_{g\in \mathbb{G}}$ is a random sampling such that $|\widehat{\mathcal{L}}(Y_1)(z)|\neq 1$ for all $z\neq 0$ and define
    \begin{align*}
    T_g(f):=\mathbb{E}(f(Y_g)).
    \end{align*}
    Then the limit of $T_g$ for $|g|\to\infty$ exists and is given by
    \begin{align*}
    \lim_{|g|\to\infty}T_g(f)=M(f):=\lim_{t\to\infty}\frac{1}{2t}\int\limits_{(-t,t)} f(s)ds\in\mathbb{R}.
    \end{align*}
\end{lemma}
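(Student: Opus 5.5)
The plan is to reduce to trigonometric polynomials via Remark \ref{remark_equiv_bohr_ap}(2), prove the claim for single exponentials, and then extend by uniform approximation.

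\textbf{Step 1: single exponentials.} Let $f(t)=e^{i\lambda t}$.
\begin{itemize}
\item If $\lambda=0$, then $T_g(f)=1=M(f)$ trivially.
\item If $\lambda\neq 0$, then $M(f)=\lim_{t\to\infty}\frac{\sin(\lambda t)}{\lambda t}=0$.
\end{itemize}
For $\lambda\ne0$ it therefore remains to show $\mathbb{E}(e^{i\lambda Y_g})\to 0$ as $|g|\to\infty$. This uses the structure of the random sampling.
\begin{itemize}
\item For $g\ge 0$, $Y_g$ is a sum of $g$ i.i.d.\ copies of $Y_1$ (case $\mathbb{Z}$), or has infinitely divisible law with $\widehat{\mathcal{L}}(Y_g)=\widehat{\mathcal{L}}(Y_1)^g$ (case $\mathbb{R}$, subordinator). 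In both cases $\mathbb{E}(e^{i\lambda Y_g})=\widehat{\mathcal{L}}(Y_1)(\lambda)^g$.
\item For $g<0$, $Y_g\overset{d}{=}-Y_{|g|}$. In the continuous case this holds because $Y'_{s-}=Y'_s$ almost surely for fixed $s$. Hence $\mathbb{E}(e^{i\lambda Y_g})=\overline{\widehat{\mathcal{L}}(Y_1)(\lambda)}^{\,|g|}$.
\end{itemize}
In either case $|T_g(f)|=|\widehat{\mathcal{L}}(Y_1)(\lambda)|^{|g|}$. By \eqref{eq_lattice_sampling}, $|\widehat{\mathcal{L}}(Y_1)(\lambda)|<1$, so this tends to $0$ as $|g|\to\infty$ (for real $g$ as well). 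By linearity of $T_g$ and $M$, the claim $T_g(p)\to M(p)$ then holds for every $p\in\mathcal{T}$.

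\textbf{Step 2: approximation.} For general $f\in AP(\mathbb{R},\mathbb{C})$ and $\varepsilon>0$, choose $p\in\mathcal{T}$ with $\|f-p\|_\infty<\varepsilon$ by Remark \ref{remark_equiv_bohr_ap}(2). Two bounds follow directly from the definitions:
\begin{align*}
|T_g(f)-T_g(p)|\le\varepsilon \quad\text{for all } g,
\end{align*}
since $T_g$ is an expectation, and
\begin{align*}
\Bigl|\tfrac{1}{2t}\textstyle\int_{(-t,t)}(f-p)\,ds\Bigr|\le\varepsilon \quad\text{for all } t.
\end{align*}
From the second bound, the averages $\frac{1}{2t}\int_{(-t,t)} f(s)\,ds$ form a Cauchy net as $t\to\infty$, since those of $p$ converge. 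So $M(f)$ exists and $|M(f)-M(p)|\le\varepsilon$. Combining the bounds,
\begin{align*}
\limsup_{|g|\to\infty}|T_g(f)-M(f)|\le 2\varepsilon,
\end{align*}
and letting $\varepsilon\to0$ proves the claim. (The statement writes $M(f)\in\mathbb{R}$; for complex $f$ the limit is in $\mathbb{C}$, and it is real when $f$ is real.)

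\textbf{Main obstacle.} The only delicate point is the identification $\widehat{\mathcal{L}}(Y_g)=\widehat{\mathcal{L}}(Y_1)^{g}$ in the two-sided subordinator case, including the left-limit convention for negative times. I would handle this with the Lévy–Khintchine exponent, $\widehat{\mathcal{L}}(Y_t)=e^{t\Psi}$, which gives $|\widehat{\mathcal{L}}(Y_t)(\lambda)|=|\widehat{\mathcal{L}}(Y_1)(\lambda)|^{t}$ for all real $t\ge0$. Together with stochastic continuity, which gives $Y'_{s-}=Y'_s$ almost surely, this covers negative times as well.
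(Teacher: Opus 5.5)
Your proof is correct and follows the paper's route. You approximate $f$ uniformly by a trigonometric polynomial, use $\mathbb{E}e^{i\lambda Y_g}=\widehat{\mathcal{L}}(Y_1)(\lambda)^g$ (the distinguished power via L\'evy--Khintchine in the subordinator case) together with $|\widehat{\mathcal{L}}(Y_1)(\lambda)|<1$, and close with a $2\varepsilon$ estimate. The differences are minor: you treat $g<0$ directly via $Y_g\overset{d}{=}-Y_{|g|}$, justifying $Y'_{s-}=Y'_s$ a.s., which the paper leaves implicit, instead of applying the positive case to $f(-\cdot)$; you prove existence of $M(f)$ rather than citing it; and you rightly note that $M(f)$ lies in $\mathbb{C}$, not necessarily $\mathbb{R}$.
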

\begin{proof}
    First note that \cite[Corollary 3.4]{Corduneanu2009} ensures the existence of the Bohr mean $M(f)$ for all $f\in AP(\mathbb{R},\mathbb{C})$. We first look at the case $g\to\infty$.\\
    Let $\varepsilon>0$. Since $f\in AP(\mathbb{R}, \mathbb{C})$, by Remark \ref{remark_equiv_bohr_ap} (ii), there exists $f_{\varepsilon}\in \mathcal{T}$ such that
    \begin{align*}
    \|f-f_{\varepsilon}\|_{\infty}<\varepsilon.
    \end{align*}
    We estimate
    \begin{align*}
    &|T_g(f)-M(f)|\\
    \leq&|T_g(f)-T_g(f_{\varepsilon})|+|T_g(f_{\varepsilon})-M(f_\varepsilon)|+|M(f_\varepsilon)-M(f)|\\
    \leq& \|f-f_{\varepsilon}\|_{\infty}+|T_g(f_{\varepsilon})-M(f_\varepsilon)|+\limsup_{t\to\infty}\frac{1}{2t}\int_{-t}^t|f(s)-f_{\varepsilon}(s)|ds\\
    \leq& 2\|f-f_{\varepsilon}\|_{\infty}+|T_g(f_{\varepsilon})-M(f_\varepsilon)|,
    \end{align*}
    so it suffices to show that $|T_g(f_{\varepsilon})-M(f_\varepsilon)|\longrightarrow 0$ as $g\to\infty$. Let $f_{\varepsilon}(x)=\sum\limits_{k=0}^n \alpha_{k} e^{i\lambda_k x}$ with $\lambda_k\in \mathbb{R}$, $a_k\in \mathbb{C}$ for all $k=0,\dotso,n\in \mathbb{N}$, $\lambda_0=0$ and $\lambda_k\ne 0$ for all $k\ne 0$. It clearly holds that
    \begin{align*}
    M(f_\varepsilon)=\alpha_0 
    \end{align*} 
    and
    \begin{align*}
    T_g(f_{\varepsilon})
    =\alpha_0+\sum\limits_{k=1}^n \alpha_k \mathbb{E}e^{i\lambda_kY_g}=\alpha_0+\sum\limits_{k=1}^n \alpha_k (\mathbb{E}e^{i\lambda_kY_1})^g;
    \end{align*}
    here, the second equality is clear when $\mathbb{G}=\mathbb{Z}$ and follows from the Lévy-Khintchine formula for Lévy processes when $\mathbb{G}=\mathbb{R}$ with $(\mathbb{E}(e^{iY_1\lambda_k}))^g$ being the distinguished $g$'th power of $(\mathbb{E}(e^{iY_1\cdot}))^g$, evaluated at $\lambda_k$, see e.g. \cite[Definition 10.24 and Theorem 10.26]{BrockwellLindner2024}.
    Since we assumed that $|\mathbb{E}e^{izY_1}|=|\widehat{\mathcal{L}}(Y_1)(z)|\ne 1$ for all $z\ne 0$, we obtain that
    \begin{align*}
    |T_g(f_\varepsilon)-\alpha_0|\leq  \sum\limits_{k=1}^n |\alpha_k||\mathbb{E}e^{i\lambda_kY_1}|^g\underset{g\to\infty}{\longrightarrow}0.
    \end{align*}   
    Now let $g<0$, then $Y_g=-Y_{-g}'$ a.s. Since $|\widehat{\mathcal{L}}(-Y_{-g}')(z)|=|\widehat{\mathcal{L}}(Y_{-g})(-z)|\ne 1$ for all $z\ne 0$, we can apply what we have already proven and get
    \begin{align*}
        \lim\limits_{g\to\infty}T_{-g}(f)=\lim\limits_{g\to\infty}\mathbb{E}(f(-Y_{g}'))=M(f(-\cdot))=M(f),
    \end{align*}
    which completes the proof.
\end{proof}
\noindent
It is worth noting that this result is not true for deterministic sampling. If we take $Y_g=ag$ for some $a\ne 0$ then the lemma already fails for monomials of the kind $f(x)=e^{i\lambda x}$ for some $\lambda >0$, since $M(f)=0$ and $|\mathbb{E}(f(Y_g))|= 1$ for all $g\in\mathbb{R}$. As we saw in the previous lemma, random sampling with the extra condition $|\widehat{\mathcal{L}}(Y_1)(z)|\neq 1$ for all $z\neq 0$ changes this behaviour. We now prove that this behaviour even holds true uniformly over relatively compact families of almost periodic functions.

\begin{lemma}\label{lemma_1_sup}
    Let $F\subset AP(\mathbb{R},\mathbb{C})$ be a relatively compact set with respect to $||\cdot||_\infty$. Let further $Y=(Y_g)_{g\in\mathbb{G}}$ be a random sampling with $|\widehat{\mathcal{L}}(Y_1)(z)|\ne 1$ for all $z\ne 0$. Then
    \begin{align*}
        \lim\limits_{|g|\to\infty} \sup\limits_{f\in F}|\mathbb{E}f(Y_g)-M(f)|=0.
    \end{align*}
\end{lemma}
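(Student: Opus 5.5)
The plan is a standard $\varepsilon$-net argument that upgrades the pointwise convergence of Lemma \ref{lemma1} to uniform convergence over $F$. The key observation is that both functionals $f\mapsto \mathbb{E}f(Y_g)$ and $f\mapsto M(f)$ are $1$-Lipschitz with respect to $\|\cdot\|_\infty$, uniformly in $g$. Indeed,
\begin{align*}
|\mathbb{E}f(Y_g)-\mathbb{E}h(Y_g)|\le \|f-h\|_\infty
\end{align*}
holds for every $g\in\mathbb{G}$, and
\begin{align*}
|M(f)-M(h)|\le \limsup_{t\to\infty}\frac{1}{2t}\int_{-t}^t|f(s)-h(s)|\,ds\le \|f-h\|_\infty,
\end{align*}
exactly as in the proof of Lemma \ref{lemma1}. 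Since $M(h)$ exists for $h\in AP(\mathbb{R},\mathbb{C})$, the second estimate is legitimate.

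Fix $\varepsilon>0$. Because $F$ is relatively compact in $(C_b(\mathbb{R},\mathbb{C}),\|\cdot\|_\infty)$, it is totally bounded. Hence there are finitely many $f_1,\dots,f_N\in F$ such that every $f\in F$ satisfies $\|f-f_j\|_\infty<\varepsilon$ for some $j$. (If one prefers centers outside $F$, one can take them in $\overline{F}\subset AP(\mathbb{R},\mathbb{C})$, since $AP$ is closed under uniform limits; but centers can always be chosen in $F$ itself.) By Lemma \ref{lemma1}, applied to each of the finitely many $f_j$, there is $g_0$ such that $|\mathbb{E}f_j(Y_g)-M(f_j)|<\varepsilon$ for all $j=1,\dots,N$ and all $|g|\ge g_0$.

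For arbitrary $f\in F$, choose $j$ with $\|f-f_j\|_\infty<\varepsilon$ and use the triangle inequality:
\begin{align*}
|\mathbb{E}f(Y_g)-M(f)|\le |\mathbb{E}f(Y_g)-\mathbb{E}f_j(Y_g)|+|\mathbb{E}f_j(Y_g)-M(f_j)|+|M(f_j)-M(f)|<3\varepsilon
\end{align*}
for all $|g|\ge g_0$. Since $g_0$ does not depend on $f$, taking the supremum over $f\in F$ and then letting $\varepsilon\downarrow 0$ gives the claim.

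There is no real obstacle here. The only points needing care are that the Lipschitz bound for $\mathbb{E}f(Y_g)$ is uniform in $g$, which is immediate since $Y_g$ is just some random variable, and that relative compactness means total boundedness in the sup norm.
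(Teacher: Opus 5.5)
Your proof is correct and follows essentially the same route as the paper: a finite $\varepsilon$-net for $F$ in $\|\cdot\|_\infty$, the uniform-in-$g$ $1$-Lipschitz bounds for $f\mapsto\mathbb{E}f(Y_g)$ and $f\mapsto M(f)$, and Lemma~\ref{lemma1} applied to the finitely many centres. The only cosmetic difference is that you take the centres inside $F$, while the paper takes them in $AP(\mathbb{R},\mathbb{C})$; this changes nothing.
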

\begin{proof}
    Let $\varepsilon>0$. Since $F$ is relatively compact, for each $\varepsilon>0$ we can find functions $f_1,...,f_n\in AP(\mathbb{R},\mathbb{C})$ such that $F\subset \bigcup_{k=1}^n B_\varepsilon(f_k)$. For $f\in F$, we therefore know that there exists $j\in\{1,...,n\}$ such that $||f-f_j||_\infty<\varepsilon$. We estimate
    \begin{align*}
        |\mathbb{E}f(Y_g)-M(f)|\leq& |\mathbb{E}f(Y_g)-\mathbb{E}f_j(Y_g)|+|\mathbb{E}f_j(Y_g)-M(f_j)|+|M(f_j)-M(f)|\\
        \leq & 2 ||f-f_j||_\infty + \max\limits_{k\in\{1,...,n\}}|\mathbb{E}f_k(Y_g)-M(f_k)|\\
        \leq & 2\varepsilon + \max\limits_{k\in\{1,...,n\}}|\mathbb{E}f_k(Y_g)-M(f_k)|,
    \end{align*}
    and thus also
    \begin{align*}
        \sup\limits_{f\in F}|\mathbb{E}f(Y_g)-M(f)|\leq 2\varepsilon +\max\limits_{k\in\{1,...,n\}}|\mathbb{E}f_k(Y_g)-M(f_k)|.
    \end{align*}
    Lemma \ref{lemma1} gives $\lim\limits_{|g|\to\infty}\max\limits_{k\in\{1,...,n\}}|\mathbb{E}f_k(Y_g)-M(f_k)|=0$ and thus the proof is complete.
\end{proof}

\begin{corollary}
\label{lemma_uniformity}
Let $f,h\in AP(\mathbb{R},\mathbb{C})$ and assume $Y=(Y_g)_{g\in \mathbb{G}}$ is a random sampling such that $|\widehat{\mathcal{L}}(Y_1)(z)|\neq 1$ for all $z\neq 0$. Then
\begin{align*}
    \lim_{|g|\to \infty} \sup_{x\in \mathbb{R}} |\mathbb{E}f(x+Y_g)-M(f)|=0
\end{align*}
and
\begin{align*}
    \lim_{|g|\to\infty} \sup_{x,y\in \mathbb{R}} |\mathbb{E}f(x+Y_g)h(y+Y_g)-M(f(\cdot+x)h(\cdot+y))|=0.
\end{align*}
\end{corollary}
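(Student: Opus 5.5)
Both statements will follow from Lemma \ref{lemma_1_sup} once we exhibit suitable relatively compact families of almost periodic functions. We use two standard facts: the Bohr mean is translation invariant, i.e.\ $M(f(\cdot+x))=M(f)$, and $AP(\mathbb{R},\mathbb{C})$ is an algebra.

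For the first statement, take
\begin{align*}
F_1:=\{f(\cdot+x):x\in\mathbb{R}\}.
\end{align*}
By Definition \ref{def_bohr_ap}, this family is relatively compact in $C_b(\mathbb{R},\mathbb{C})$, and every member is almost periodic. Translation invariance of the Bohr mean follows directly from its definition as a limit of averages over $(-t,t)$, because shifting the window by $x$ changes the integral by at most $2|x|\,\|f\|_\infty$, which becomes negligible after dividing by $2t$. Hence $M(f(\cdot+x))=M(f)$. Applying Lemma \ref{lemma_1_sup} to $F_1$ then gives
\begin{align*}
\sup_{x}|\mathbb{E}f(x+Y_g)-M(f)|\to 0 .
\end{align*}

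For the second statement, take
\begin{align*}
F_2:=\{f(\cdot+x)h(\cdot+y):x,y\in\mathbb{R}\}.
\end{align*}
We need two things.

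\textbf{Each element lies in $AP(\mathbb{R},\mathbb{C})$.} By Remark \ref{remark_equiv_bohr_ap}(ii), products of trigonometric polynomials are trigonometric polynomials, and $\|pq-f_xh_y\|_\infty\le \|p\|_\infty\|q-h_y\|_\infty+\|h\|_\infty\|p-f_x\|_\infty$. So uniform approximation of $f_x$ and $h_y$ by trigonometric polynomials yields uniform approximation of the product.

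\textbf{$F_2$ is relatively compact.} This is the main point. We show $F_2$ is totally bounded in the sup norm. Given $\varepsilon>0$, cover the relatively compact orbits $\{f(\cdot+x)\}$ and $\{h(\cdot+y)\}$ by finitely many $\varepsilon$-balls centered at $f(\cdot+x_i)$ and $h(\cdot+y_j)$ respectively. Using the elementary estimate
\begin{align*}
\|f_xh_y-f_{x_i}h_{y_j}\|_\infty\le \|f\|_\infty\|h_y-h_{y_j}\|_\infty+\|h\|_\infty\|f_x-f_{x_i}\|_\infty ,
\end{align*}
the finitely many products $f(\cdot+x_i)h(\cdot+y_j)$ form a $(\|f\|_\infty+\|h\|_\infty)\varepsilon$-net for $F_2$. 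Because $C_b(\mathbb{R},\mathbb{C})$ is complete, total boundedness gives relative compactness.

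Finally, apply Lemma \ref{lemma_1_sup} to $F_2$. This yields the claim with $M(f(\cdot+x)h(\cdot+y))$ on the right, and no further identification of the mean is needed.
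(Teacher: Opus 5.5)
Your proof is correct and follows the same route as the paper: apply Lemma \ref{lemma_1_sup} to the relatively compact families $\{f(\cdot+x):x\in\mathbb{R}\}$ and $\{f(\cdot+x)h(\cdot+y):x,y\in\mathbb{R}\}$, and use translation invariance of the Bohr mean. The only difference is that you prove the supporting facts yourself (products of almost periodic functions are almost periodic, total boundedness of the product family, and translation invariance of $M$), whereas the paper cites or simply asserts them.
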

\begin{proof}
    If $f,h\in AP(\mathbb{R},\mathbb{C})$, then also $fh\in AP(\mathbb{R},\mathbb{C})$, see e.g. \cite[Remark 3.4.IV]{Corduneanu2009}. Moreover by definition the sets $\{f(\cdot +x): x\in\mathbb{R}\}$ and $\{h(\cdot+x):x\in\mathbb{R}\}$ are relatively compact and so is the set $\{f(\cdot +x)h(\cdot +y):x,y\in\mathbb{R}\}$. Since $M(f)=M(f(x+\cdot))$, the assertion follows immediately with Lemma \ref{lemma_1_sup}.
\end{proof}
\begin{remark}
Let $\check{h}(t):=h(-t)$. For $s=x-y$, translation invariance of the
Bohr mean gives
\[
    M(f(\cdot+x)h(\cdot+y))
    =M(f(\cdot+s)h(\cdot))
    =(f\circledast\check h)(s),
\]
where $\circledast$ denotes the Eberlein convolution, see
\cite[Section 3]{LenzSpindelerStrungaru}. Thus, Corollary 2.5 shows that random sampling asymptotically recovers the Eberlein convolution, uniformly in $x$ and $y$.
\end{remark}
\begin{definition}\label{def_bohr_ap_proc}
Let $X=(X_t)_{t\in\mathbb R}$ be a real-valued stochastic process.
We say that $X$ is \emph{almost periodic in distributional sense} if, for every
    $n\in\mathbb N$ and every $t_1,\ldots,t_n\in\mathbb R$, the map
    \begin{align*}
        s\mapsto \mathcal L(X_{t_1+s},\ldots,X_{t_n+s})
    \end{align*}
    is Bohr almost periodic as a map from $\mathbb R$ into the complete metric space $(\mathcal P(\mathbb R^n),d_{\mathrm{LP}})$, where $\mathcal{P}(\mathbb{R}^n)$ denotes the space of probability measures on $\mathbb{R}^n$ and $d_{LP}$ denotes the Lévy-Prokhorov metric.
\end{definition} 
\noindent
In \cite{BM-22} almost periodic processes in distributional sense are also called almost periodic stationary and several properties of such processes and equivalent definitions in terms of characteristic functions are given. \\
It is clear that every strictly stationary process is almost periodic in distributional sense, but the converse is not necessarily true as the deterministic process given by $X_t=\cos(t)$ already shows.\\
In further sections we look at processes which are almost periodic in distributional sense as well as subordinated processes. Since the assumption that $X$ is almost periodic does not guarantee that $X_{Y_g}$ is measurable, we from now on assume that $(t,\omega)\mapsto X_t(\omega)$ is jointly measurable.\\
\noindent
The next lemma is a preparatory lemma for the main results in Section 3. 
\begin{lemma}\label{rel_compact_char}
    Let $X=(X_t)_{t\in\mathbb R}$ be a real-valued stochastic process, which is almost periodic in distributional sense. Let $n\in\mathbb{N}$ and  $u=(u_1,...,u_{n-1})\in [0,\infty)^{n-1}$. Define for $z\in\mathbb{R}^{n}$
    \begin{align*}
        F_{u,z}(g):=\widehat{\mathcal{L}}(X_g,X_{g+u_1},...,X_{g+\sum_{k=1}^{n-1}u_k})(z), \qquad g\in\mathbb{R}.
    \end{align*}
    Then for every $r>0$ the set
    \begin{align*}
        H_{u,r}:=\{F_{u,z}: z\in \overline{B_r(0)}\}
    \end{align*}
    is compact in $(AP(\mathbb{R},\mathbb{C}), ||\cdot||_\infty)$.
\end{lemma}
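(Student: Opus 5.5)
We show that $H_{u,r}$ is the continuous image of the compact ball $\overline{B_r(0)}\subset\mathbb{R}^n$ under a map into $AP(\mathbb{R},\mathbb{C})$. Compactness then follows from continuity alone.

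\emph{Step 1: each $F_{u,z}$ lies in $AP(\mathbb{R},\mathbb{C})$.} Put $t_1=0$ and $t_j=\sum_{k=1}^{j-1}u_k$. By Definition \ref{def_bohr_ap_proc}, the map $\mu:\mathbb{R}\to(\mathcal P(\mathbb{R}^n),d_{LP})$, $g\mapsto\mathcal{L}(X_{g+t_1},\dots,X_{g+t_n})$, is Bohr almost periodic. Fix $z$. The map $\Phi_z:\mathcal P(\mathbb{R}^n)\to\mathbb{C}$, $\nu\mapsto\int e^{iz^Tx}\nu(dx)$, is continuous for $d_{LP}$, since $d_{LP}$ metrizes weak convergence and $x\mapsto e^{iz^Tx}$ is bounded and continuous. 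Since $F_{u,z}=\Phi_z\circ\mu$, it is continuous. The translates $\{\mu(\cdot+h):h\in\mathbb{R}\}$ form a relatively compact subset of $C_b(\mathbb{R},\mathcal P(\mathbb{R}^n))$, and we want to push this through $\Phi_z$. The convenient tool is Bochner's characterisation in Remark \ref{remark_equiv_bohr_ap}(i), applied with a modulus that is uniform in $z$; Step 2 supplies it.

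\emph{Step 2: a uniform modulus.} The key estimate is the following. For $|z|\le r$ and probability measures $\nu,\nu'$ on $\mathbb{R}^n$,
\[
|\Phi_z(\nu)-\Phi_z(\nu')|\le C(r)\,\sqrt{d_{LP}(\nu,\nu')}\quad\text{(or some }\omega_r(d_{LP}(\nu,\nu'))\text{ with }\omega_r(0+)=0).
\]
To prove it, let $\delta=d_{LP}(\nu,\nu')$ and use Strassen's theorem to obtain a coupling $(V,V')$ with $P(|V-V'|>\delta')\le\delta'$ for every $\delta'>\delta$. Then
\[
|\mathbb{E}e^{iz^TV}-\mathbb{E}e^{iz^TV'}|\le r\delta'+2\delta' .
\]
So $\Phi_z$ is Lipschitz in $d_{LP}$ with constant $r+2$, uniformly over $z\in\overline{B_r(0)}$. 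Now take an $\varepsilon$-almost period $\tau$ of $\mu$, meaning $d_{LP}(\mu(g+\tau),\mu(g))<\varepsilon$ for all $g$. It is simultaneously an $(r+2)\varepsilon$-almost period of every $F_{u,z}$ with $|z|\le r$. Together with continuity, Remark \ref{remark_equiv_bohr_ap}(i) gives $F_{u,z}\in AP$.

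\emph{Step 3: continuity of $z\mapsto F_{u,z}$ in $\|\cdot\|_\infty$.} This is the main obstacle, because we need uniformity in $g$. For $z,z'\in\overline{B_r(0)}$,
\[
|F_{u,z}(g)-F_{u,z'}(g)|\le \mathbb{E}\min(2,|z-z'|\,|V_g|),\qquad V_g:=(X_{g+t_1},\dots,X_{g+t_n}).
\]
This is small uniformly in $g$ provided the family $\{\mu(g):g\in\mathbb{R}\}$ is tight. Tightness holds because the range of an almost periodic function is relatively compact; equivalently, $\{\mu(\cdot+h)\}$ is relatively compact in $C_b$, so $\mu(\mathbb{R})$ is relatively compact in $(\mathcal P(\mathbb{R}^n),d_{LP})$, and Prokhorov's theorem applies. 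Given $\varepsilon>0$, choose $K$ with $\sup_gP(|V_g|>K)<\varepsilon$. Then
\[
\|F_{u,z}-F_{u,z'}\|_\infty\le |z-z'|K+2\varepsilon .
\]
Hence $z\mapsto F_{u,z}$ is continuous (indeed uniformly so) from $\overline{B_r(0)}$ into $(AP(\mathbb{R},\mathbb{C}),\|\cdot\|_\infty)$. Since $\overline{B_r(0)}$ is compact, its image $H_{u,r}$ is compact.
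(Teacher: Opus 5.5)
Your proof is correct. Its skeleton matches the paper's: show each $F_{u,z}$ is almost periodic, show that $z\mapsto F_{u,z}$ is continuous from $\overline{B_r(0)}$ into $(C_b(\mathbb{R},\mathbb{C}),\|\cdot\|_\infty)$, and conclude because the ball is compact. The ingredients differ, though.

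\textbf{The paper's route.} It gets both facts from the characterisation in \cite[Theorem 2.9]{BM-22}. Applied with $K=\{z\}$, it gives $F_{u,z}\in AP$. Applied with $K=\overline{B_r(0)}$, together with L\'evy's continuity theorem, it shows that $g\mapsto G_u(g)=F_{u,\cdot}(g)$ is an almost periodic map into $C(\overline{B_r(0)},\mathbb{C})$. The range of $G_u$ is then relatively compact, and Arzel\`a--Ascoli yields equicontinuity in $z$ uniformly in $g$.

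\textbf{Your route.} You work directly with $d_{LP}$.
\begin{itemize}
\item The Strassen coupling makes $\nu\mapsto\int e^{iz^Tx}\,\nu(dx)$ $(r+2)$-Lipschitz, so almost periods of $\mu$ transfer to $F_{u,z}$.
\item The range $\{\mu(g):g\in\mathbb{R}\}$ is relatively compact, hence tight by Prokhorov. This gives the explicit modulus $\|F_{u,z}-F_{u,z'}\|_\infty\le K_\varepsilon|z-z'|+2\varepsilon$.
\end{itemize}

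\textbf{Comparison.} The two equicontinuity arguments are really the same fact seen from two sides: equicontinuity of a family of characteristic functions is equivalent to tightness of the corresponding laws. Your version is self-contained. It avoids the vector-valued almost periodic function $G_u$ and the external characterisation theorem, and it gives uniform continuity of $z\mapsto F_{u,z}$ on all of $\mathbb{R}^n$. The paper's version stays inside the characteristic-function framework of \cite{BM-22}, which it reuses later.

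\textbf{A simplification.} The uniformity in $z$ of the almost periods in your Step~2 is not needed. Almost periodicity of each single $F_{u,z}$ suffices, since compactness comes from Step~3. Step~1 could even be done without Strassen, using uniform continuity of $\Phi_z$ on the compact closure of $\mu(\mathbb{R})$.
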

\begin{proof}
    Since $X$ is almost periodic in distributional sense, we get for each $u\in [0,\infty)^{n-1}$ and $z\in \overline{B_r(0)}$ that the function $F_{u,z}$ is in $AP(\mathbb{R},\mathbb{C})$ and thus $H_{u,z}\subset AP(\mathbb{R},\mathbb{C})$; this follows from \cite[Theorem 2.9]{BM-22}, applied with $K:=\{z\}$.\\
    For $u\in[0,\infty)^{n-1}$ we define $G_u: \mathbb{R}\to C(\overline{B_r(0)}, \mathbb{C})$ by 
    \begin{align*}
        G_u(g)(z):=\widehat{\mathcal{L}}(X_g,X_{g+u_1},...,X_{g+\sum_{k=1}^{n-1}u_k})(z)=F_{u,z}(g).
    \end{align*}
    By \cite[Theorem 2.9]{BM-22}, $g\mapsto G_u(g)(z)$ is continuous for each $z\in \mathbb{R}^n$. Let now $g_n\to g$. Since $G_u(g)(z)=F_{u,z}(g)$ is a characteristic function and $G_u(g_n)$ converges pointwise to $G_u(g)$, Lévy's continuity theorem gives that $G_u(g_n)(z)\underset{n\to\infty}{\longrightarrow}G_u(g)(z)$ locally uniformly in $z$, so $G_u\in C(\mathbb{R}, C(\overline{B_r(0)}, \mathbb{C}))$. Applying \cite[Theorem 2.9]{BM-22} with $K:=\overline{B_r(0)}$ furthermore gives that $G_u$ is almost periodic in the sense of Bochner and thus by Remark \ref{remark_equiv_bohr_ap}(i), $G_u\in AP(\mathbb{R}, C(\overline{B_r(0)}, \mathbb{C}))$.
    \cite[Proposition 3.18]{Corduneanu2009} now shows that $\widetilde{H}_{u,r}:=\{G_u(g): g\in\mathbb{R}\}$ is relatively compact in $C(\overline{B_r(0)}, \mathbb{C})$ and the Theorem of Arzelà-Ascoli thus yields
    \begin{align*}
        \sup\limits_{f\in \widetilde{H}_{u,r}}|f(z)-f(z')|\underset{z\to z'}{\longrightarrow}0.
    \end{align*}
    Hence for $z,z'\in \overline{B_r(0)}$ we get
    \begin{align*}
        \sup\limits_{g\in\mathbb{R}}|F_{u,z}(g)-F_{u,z'}(g)|=\sup\limits_{g\in\mathbb{R}}|G_u(g)(z)-G_u(g)(z')|\leq \sup\limits_{f\in \widetilde{H}_{u,r}}|f(z)-f(z')|,
    \end{align*}
    yielding that $z\mapsto F_{u,z}$ is a continuous mapping from $\overline{B_r(0)}$ to $(C_b(\mathbb{R},\mathbb{C}),||\cdot||_\infty)$. This gives that $H_{u,r}$ is compact in $(AP(\mathbb{R},\mathbb{C}),||\cdot||_\infty)$.     
\end{proof}

\section{Stationary limits under random sampling}
We now study the asymptotic effect of random sampling on processes which are almost periodic in distributional sense. Our main result is that, under the non-degeneracy condition $|\widetilde{\mathcal{L}}(Y_1)(z)|\ne 1$ for all $z\ne 0$, the shifted finite dimensional distributions of the sampled process converge to those of a strictly stationary limit process. As a consequence, a sampled process, which is almost periodic in distribution, must necessarily be strictly stationary. 

\begin{theorem}\label{Theorem_stat_Z}
Let $(X_t)_{t\in \mathbb{R}}$ be almost periodic in distributional sense, $Y$ be a random sampling, independent of $X$, such that $|\widehat{\mathcal{L}}(Y_1)(z)|\neq 1$ for all $z\neq 0$ and $g_1<g_2<\dotso<g_n\in \mathbb{G}$ for some $n\in\mathbb{N}$. 
Then, 
\begin{align*}
    (X_{Y_{g_1+g}},\ldots,X_{Y_{g_n+g}})
\underset{|g|\to\infty}{\overset{d}{\longrightarrow}} Z^{g_1,\ldots,g_n}.    
\end{align*}
Moreover, the law of $Z^{g_1,\ldots,g_n}$ depends only on the increments
$g_2-g_1,\ldots,g_n-g_{n-1}$. The characteristic function of $Z^{g_1,\ldots,g_n}$ at $z\in\mathbb{R}^n$ is given by
\begin{align*}
\lim\limits_{t\to\infty}\frac{1}{2t}\int\limits_{(-t,t)}\int\limits_{[0,\infty)}\dotso\int\limits_{[0,\infty)}\widehat{\mathcal{L}}(X_{s},\dotso,X_{s+\sum\limits_{i=1}^{n-1}u_i})(z)\mathcal{L}(Y_{g_2-g_1})(du_1)\dotso\mathcal{L}(Y_{g_n-g_{n-1}})(du_{n-1})ds.
\end{align*}
\end{theorem}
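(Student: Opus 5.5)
We work with characteristic functions and use Lévy's continuity theorem. Fix $z\in\mathbb{R}^n$ and write $h_j:=g_{j+1}-g_j$ for $j=1,\dots,n-1$. Conditioning on $Y$, which is independent of $X$, and using the joint measurability of $X$, we get
\begin{align*}
\widehat{\mathcal{L}}(X_{Y_{g_1+g}},\ldots,X_{Y_{g_n+g}})(z)=\mathbb{E}\,F_{U,z}(Y_{g_1+g}),
\end{align*}
where $U=(U_1,\dots,U_{n-1})$ with $U_j:=Y_{g_{j+1}+g}-Y_{g_j+g}$, and $F_{u,z}$ is as in Lemma \ref{rel_compact_char}. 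The key structural fact is that $Y_{g_1+g}$ is independent of $U$, and that the $U_j$ are independent with $U_j\overset{d}{=}Y_{h_j}$, with laws not depending on $g$. This holds by independent and stationary increments in both the random-walk and the subordinator case. One caveat: if $g_1+g<0<g_n+g$, some increments straddle $0$. For the two-sided construction, increments across $0$ still have the law $\mathcal{L}(Y_{h_j})$ (the left-limit convention is designed for this), but $Y_{g_1+g}$ is then not independent of them. This is avoided because for $|g|$ large all the indices $g_j+g$ have the same sign. For $g\to\infty$ we use the increments of the right part. For $g\to-\infty$ we write $Y_{g_1+g}=Y_{g_n+g}-\sum_j U_j$, condition on $U$, and use that $Y_{g_n+g}$ is independent of $U$.

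By Fubini, for large positive $g$,
\begin{align*}
\mathbb{E}\,F_{U,z}(Y_{g_1+g})=\int_{[0,\infty)^{n-1}}\mathbb{E}\,F_{u,z}(Y_{g_1+g})\,\mathcal{L}(Y_{h_1})(du_1)\cdots\mathcal{L}(Y_{h_{n-1}})(du_{n-1}).
\end{align*}
For each fixed $u$, Lemma \ref{rel_compact_char} gives $F_{u,z}\in AP(\mathbb{R},\mathbb{C})$, and Lemma \ref{lemma1} gives $\mathbb{E}F_{u,z}(Y_{g_1+g})\to M(F_{u,z})$. Since $|F_{u,z}|\le 1$, dominated convergence yields convergence to
\begin{align*}
\phi(z):=\int M(F_{u,z})\,\mathcal{L}(Y_{h_1})(du_1)\cdots.
\end{align*}
Applying Fubini once more inside the Bohr mean (the averages $\frac1{2t}\int_{-t}^t F_{u,z}(s)\,ds$ are bounded by $1$), $\phi(z)$ equals the displayed formula in the theorem.

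For $g\to-\infty$ we condition as above. The quantity is then $\int \mathbb{E}\,\tilde F_{u,z}(Y_{g_n+g})\,\mathcal{L}(Y_{h_1})(du_1)\cdots$ with $\tilde F_{u,z}(s)=F_{u,z}(s-\sum_j u_j)$, a translate of an almost periodic function. Lemma \ref{lemma1} (the $g\to-\infty$ case) and the translation invariance of $M$ give the same limit $\phi(z)$. The only dependence on $g_1,\dots,g_n$ enters through the laws of $Y_{h_j}$, so the limit law depends only on the increments.

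The remaining and main step is to show that $\phi$ is a characteristic function, so that Lévy's continuity theorem applies. We plan to prove continuity of $\phi$ at $0$, equivalently tightness. By Lemma \ref{rel_compact_char}, $z\mapsto F_{u,z}$ is continuous into $(C_b(\mathbb{R},\mathbb{C}),\|\cdot\|_\infty)$ for each $u$, hence $z\mapsto M(F_{u,z})$ is continuous since $|M(f)|\le\|f\|_\infty$. Dominated convergence in $u$ then gives continuity of $\phi$. Pointwise convergence of characteristic functions to a function continuous at $0$ yields, by Lévy's theorem, convergence in distribution to some $Z^{g_1,\ldots,g_n}$ with characteristic function $\phi$.

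The hard part we expect is the bookkeeping of independence in the two-sided sampling for negative $g$, together with the measurability of $(u,s)\mapsto F_{u,z}(s)$. The latter is needed for Fubini and follows from joint measurability of $X$, since $F_{u,z}(s)=\mathbb{E}\,e^{i\sum_k z_kX_{s+u_1+\dots+u_{k-1}}}$ is measurable in $(u,s)$.
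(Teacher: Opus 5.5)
Your proposal is correct and follows the paper's proof essentially step by step: the same Fubini representation via independent increments, Lemma \ref{lemma1} for each fixed $u$ combined with dominated convergence, the translated function $F_{u,z}(\cdot-\sum_j u_j)$ with translation invariance of $M$ for $g\to-\infty$, and Lévy's continuity theorem. The only difference is minor. The paper upgrades to locally uniform convergence in $z$ via Lemma \ref{lemma_1_sup} applied to the compact family $H_{u,r}$. You instead keep pointwise convergence and get continuity of the limit directly from the sup-norm continuity of $z\mapsto F_{u,z}$ established in the proof of Lemma \ref{rel_compact_char}; both routes suffice for Lévy's theorem.
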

\begin{proof}
We first treat the case $g\to\infty$. Let $n\in\mathbb{N}$ and $g_1<g_2<...<g_n \in \mathbb{G}$. Assume $g$ to be so large, that $g_1+g>0$.  As $0<g_1+g<g_2+g<\dotso<g_n+g$, we write for $j\in\{2,...,n\}$, $g_j+g=g_1+g+\sum\limits_{k=1}^{j-1} ((g_{k+1}+g)-(g_k+g))$. Since $Y$ is a random sampling, we know that $Y_{g_{j+1}+g}-Y_{g_j+g}\overset{d}{=}Y_{g_{j+1}-g_j}$ and that $Y_{g_1+g},(Y_{g_2+g}-Y_{g_1+g}),...,(Y_{g_n+g}-Y_{g_{n-1}+g})$ are independent. Furthermore we have
\begin{align*}
    Y_{g_j+g}=Y_{g_1+g}+\sum_{k=1}^{j-1}(Y_{g_{k+1}+g}-Y_{g_k+g}).
\end{align*}
Using Fubini's theorem and the independence of the increments of $Y$, as well as the independence of $X$ and $Y$,  we write
\begin{align*}
    &\widehat{\mathcal{L}}(X_{Y_{g_1+g}},\dotso,X_{Y_{g_n+g}})(z)=\widehat{\mathcal{L}}(X_{Y_{g_1+g}},\dotso,X_{Y_{g_1+g}+\sum_{k=1}^{n-1}Y_{g_{k+1}+g}-Y_{g_k+g}})(z)\\
    =&\int_{[0,\infty)}...\int_{[0,\infty)}\mathbb{E}(F_{u,z}(Y_{g_1+g}))\mathcal{L}(Y_{g_2-g_1})(du_1)\dotso\mathcal{L}(Y_{g_n-g_{n-1}})(du_{n-1}), 
\end{align*}
with $F_{u,z}$ as in Lemma \ref{rel_compact_char}. Now fix $r>0$. With Lemma \ref{rel_compact_char} we know that the set $H_{u,r}=\{F_{u,z}: z\in \overline{B_r(0)}\}$ is compact in $(AP(\mathbb{R},\mathbb{C}), ||\cdot||_\infty)$, so for every fixed $u\in [0,\infty)^{n-1}$, Lemma \ref{lemma_1_sup} gives that 
\begin{align*}
    \sup\limits_{z\in \overline{B_r(0)}}|\mathbb{E}(F_{u,z}(Y_{g_1+g})) - M(F_{u,z})|\underset{g\to\infty}{\longrightarrow}0,
\end{align*}
with $M(F_{u,z})=\lim\limits_{t\to\infty}\frac{1}{2t}\int_{(-t,t)}\widehat{\mathcal{L}}(X_s, X_{s+u_1},...,X_{s+\sum_{k=1}^{n-1}u_k})(z)\,ds$. We estimate
\begin{align*}
    &\sup\limits_{z\in \overline{B_r(0)}}\Bigg|\int_{[0,\infty)}...\int_{[0,\infty)}\mathbb{E}(F_{u,z}(Y_{g_1+g}))\mathcal{L}(Y_{g_2-g_1})(du_1)\dotso\mathcal{L}(Y_{g_n-g_{n-1}})(du_{n-1})\\
    &-\int_{[0,\infty)}...\int_{[0,\infty)}M(F_{u,z})\mathcal{L}(Y_{g_2-g_1})(du_1)\dotso\mathcal{L}(Y_{g_n-g_{n-1}})(du_{n-1})\Bigg| \\
    &\leq \int_{[0,\infty)}...\int_{[0,\infty)} \sup\limits_{z\in\overline{B_r(0)}}|\mathbb{E}(F_{u,z}(Y_{g_1+g})) - M(F_{u,z})|\mathcal{L}(Y_{g_2-g_1})(du_1)\dotso\mathcal{L}(Y_{g_n-g_{n-1}})(du_{n-1})
\end{align*}
and since $\sup\limits_{z\in \overline{B_r(0)}}|\mathbb{E}(F_{u,z}(Y_{g_1+g})) - M(F_{u,z})|\leq 2$ an application of Lebesgue's dominated convergence theorem shows that the right hand side of the inequality converges to $0$ as $g\to\infty$. Thus the sequence of characteristic functions $\widehat{\mathcal{L}}(X_{Y_{g_1+g}},\dotso,X_{Y_{g_n+g}})$ converges locally uniformly to $\int_{[0,\infty)}...\int_{[0,\infty)}M(F_{u,z})\mathcal{L}(Y_{g_2-g_1})(du_1)\dotso\mathcal{L}(Y_{g_n-g_{n-1}})(du_{n-1})$. Since the convergence is locally uniform we get that the limit function has to be continuous in $0$ and Lévy's continuity theorem now gives that
\begin{align*}
    z\mapsto\int_{[0,\infty)}...\int_{[0,\infty)}M(F_{u,z})\mathcal{L}(Y_{g_2-g_1})(du_1)\dotso\mathcal{L}(Y_{g_n-g_{n-1}})(du_{n-1})
\end{align*}
is a characteristic function of a random vector $Z^{g_1,...,g_n}$ only depending on the size of the increments $g_2-g_1,...,g_n-g_{n-1}$. The exact form of the characteristic function as stated in the theorem can now be easily obtained by another application of Lebesgue's dominated convergence theorem and Fubini's theorem.\\
Now to the case $g\to-\infty$. Let again $n\in\mathbb{N}$ and $g_1<...<g_n\in \mathbb{G}$. We assume $g$ to be so small, that $g_n+g<0$. We define $q_k:=-(g_{k}+g)$ for $k\in{1,...,n}$. This implies $0<q_n<...<q_1$. Furthermore the variables $(Y_{g_{j+1}+g}-Y_{g_j+g})$ are independent for $j\in\{1,...,n-1\}$ and are independent of $Y_{g_n+g}$. We write
\begin{align*}
    Y_{g_j+g}=Y_{g_n+g}-\sum_{k=j}^{n-1}(Y_{g_{k+1}+g}-Y_{g_k+g})
\end{align*}
and define for $u\in [0,\infty)^{n-1}, z\in\mathbb{R}^n$, $\widetilde{F}_{u,z}(s):=\widehat{\mathcal{L}}(X_{s-\sum_{k=1}^{n-1}u_k},X_{s-\sum_{k=2}^{n-1}u_k},...,X_{s})$. Then $\widetilde{F}_{u,z}(s)=F_{u,z}(s-\sum_{k=1}^{n-1}u_k)$ and since the Bohr mean is translation invariant we obtain $M(\widetilde{F}_{u,z})=M(F_{u,z})$. Since also $\widetilde{H}_{u,r}=\{\widetilde{F}_{u,z}:z\in\overline{B_r(0)}\}$ is compact in $(AP(\mathbb{R},\mathbb{C}), ||\cdot||_\infty)$, we get with Lemma \ref{lemma_1_sup} that
\begin{align*}
    \sup\limits_{z\in \overline{B_r(0)}}|\mathbb{E}(\widetilde{F}_{u,z}(Y_{g_n+g})) - M(F_{u,z})|\underset{g\to-\infty}{\longrightarrow}0.
\end{align*}
The rest of the proof follows verbatim to the positive case.
\end{proof}
\begin{remark}\label{remark_after_stat_thrm}
\begin{enumerate}
    \item The condition $|\widehat{\mathcal{L}}(Y_1)(z)|\ne 1$ for all $z\ne 0$ cannot simply be omitted, as the later Example \ref{exmp_non_ap_stat} shows.
    \item $Z^{g_1,...,g_n}$ can be seen as a collection of finite-dimensional limits, which only depend on the increments.
    The family of limiting finite-dimensional distributions is consistent, since it is obtained as the weak limit of consistent finite-dimensional distributions. By Kolmogorov's extension theorem it therefore defines a process
    $Z=(Z_g)_{g\in G}$. Since the limiting laws depend only on the increments
    between the indices, $Z$ is strictly stationary.
    \item Since weak convergence and uniform $L_1$ integrability imply convergence of the first moment (see e.g. \cite[Theorem 25.12]{Billingsley2012}) we get: If $X$ and $Y$ satisfy the assumptions of Theorem \ref{Theorem_stat_Z} and if $X$ is additionally $L_1$-uniformly integrable, then it is easily checked that also $(X_{Y_g})_{g\in\mathbb{G}}$ is $L_1$-uniformly integrable, hence
    \begin{align*}
        \lim\limits_{|g|\to\infty}\mathbb{E}(X_{Y_g})=\mathbb{E}(Z_0).
    \end{align*}
    If $X$ is even $L_2$-uniformly integrable, then also
    \begin{align*}
         \lim\limits_{|g|\to\infty} {\rm{Cov}}(X_{Y_{g+h}}, X_{Y_g})={\rm{Cov}}(Z_h,Z_0),
    \end{align*}
    for each fixed $h\in\mathbb{G}$.
\end{enumerate}

\end{remark}
\noindent
As a consequence of Theorem \ref{Theorem_stat_Z} we show that $X_Y$ can only be almost periodic if it is already strictly stationary. We only state it for $\mathbb{G}=\mathbb{R}$. The statement can also be shown verbatim for $\mathbb{G}=\mathbb{Z}$, but it would require to define the notion of stochastic processes which are almost periodic in distributional sense on a more general level, which we have decided to omit to improve the readability of the paper.
\begin{corollary}\label{cor_ap_stat}
Let $X=(X_t)_{t\in \mathbb{R}}$ be almost periodic in distributional sense and $Y=(Y_g)_{g\in \mathbb{R}}$ be a random sampling, independent of $X$, such that $|\widehat{\mathcal{L}}(Y_1)(z)|\neq 1$ for all $z\neq 0$. Then the following assertions are equivalent:
\begin{itemize}
\item[1)] $X_Y$ is almost periodic in distributional sense.
\item[2)] $X_Y$ is strictly stationary.
\end{itemize}
\end{corollary}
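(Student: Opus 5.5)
The direction 2) $\Rightarrow$ 1) is immediate, since every strictly stationary process is almost periodic in distributional sense; the constant map $s\mapsto \mathcal L(X_{Y_{t_1+s}},\dots,X_{Y_{t_n+s}})$ is trivially Bohr almost periodic. (Joint measurability of $(t,\omega)\mapsto X_t(\omega)$ ensures $X_Y$ is a process.) So the work is in 1) $\Rightarrow$ 2).

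For 1) $\Rightarrow$ 2), fix $n\in\mathbb N$ and $t_1<\dots<t_n$ in $\mathbb R$, and set $\mu(s):=\mathcal L(X_{Y_{t_1+s}},\dots,X_{Y_{t_n+s}})$. By assumption, $\mu\in AP(\mathbb R,(\mathcal P(\mathbb R^n),d_{\mathrm{LP}}))$. Theorem \ref{Theorem_stat_Z} gives $\mu(s)\to \nu:=\mathcal L(Z^{t_1,\dots,t_n})$ weakly as $|s|\to\infty$, hence $d_{\mathrm{LP}}(\mu(s),\nu)\to 0$, since the Lévy--Prokhorov metric metrizes weak convergence. The key step is the elementary fact that an almost periodic function with a limit at infinity is constant. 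To prove it, fix $s_0\in\mathbb R$ and $\varepsilon>0$. By the Bochner characterization in Remark \ref{remark_equiv_bohr_ap}(i), there are $\varepsilon$-almost periods $\tau_k\in[k,k+L_\varepsilon]$ for every $k\in\mathbb N$, so $d_{\mathrm{LP}}(\mu(s_0+\tau_k),\mu(s_0))<\varepsilon$. Since $\tau_k\to\infty$, we have $\mu(s_0+\tau_k)\to\nu$, which yields $d_{\mathrm{LP}}(\mu(s_0),\nu)\le\varepsilon$. As $\varepsilon$ was arbitrary, $\mu(s_0)=\nu$ for all $s_0$.

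Consequently $\mathcal L(X_{Y_{t_1+s}},\dots,X_{Y_{t_n+s}})=\mathcal L(X_{Y_{t_1}},\dots,X_{Y_{t_n}})$ for all $s$, i.e.\ $X_Y$ is strictly stationary. Arbitrary orderings of the $t_i$, including repeated indices, reduce to the increasing case by permuting coordinates or duplicating them, since both operations commute with the shift.

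The only potentially delicate points are checking that Theorem \ref{Theorem_stat_Z} applies with real (non-integer) indices when $\mathbb G=\mathbb R$, which it does as stated, and that almost periodicity in $(\mathcal P(\mathbb R^n),d_{\mathrm{LP}})$ provides the Bochner almost periods used above. The latter follows from Remark \ref{remark_equiv_bohr_ap}(i) applied to the complete metric space $\mathcal P(\mathbb R^n)$. I expect no serious obstacle beyond this.
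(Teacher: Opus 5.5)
Your proof is correct and follows the paper's argument: use Theorem \ref{Theorem_stat_Z} to obtain a limit of the shifted finite-dimensional laws as $g\to\infty$, then show that an almost periodic map with a limit at infinity must be constant. The only difference is that you prove this last step with relatively dense $\varepsilon$-almost periods from Remark \ref{remark_equiv_bohr_ap}(i), whereas the paper uses the relative compactness in Definition \ref{def_bohr_ap} to extract a uniformly convergent subsequence of translates; your version is slightly more direct, and it handles the $n$-dimensional marginals and unordered index tuples explicitly rather than saying the same argument applies.
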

\begin{proof}
It is clear that $2)$ implies $1)$, so we show that $1)$ implies $2)$. Let $X_Y$ be almost periodic in distributional sense. With Theorem \ref{Theorem_stat_Z} we get that there exists $\rho\in\mathcal{P}(\mathbb{R})$ such that $d_{LP}(\mathcal{L}(X_{Y_g}), \rho)\underset{g\to\infty}{\longrightarrow} 0$. Define $f:\mathbb{R}\to \mathcal{P}(\mathbb{R})$ by $f(g)=\mathcal{L}(X_{Y_g})$, then, by assumption, $f$ is almost periodic on $(\mathcal{P}(\mathbb{R}), d_{LP})$.By Definition \ref{def_bohr_ap} we find for every sequence $(g_n)_{n\in\mathbb{N}}\subset \mathbb{R}$ a subsequence $(g_{n_k})_{k\in\mathbb{N}}$ such that $\mathcal{L}(X_{Y_{g_{n_k}+\cdot}})$ converges in $C(\mathbb{R}, \mathcal{P}(\mathbb{R}))$ to a function $\varphi$, i.e. 
\begin{align*}
    \lim\limits_{k\to\infty}\sup\limits_{h\in\mathbb{R}}d_{LP}(\mathcal{L}(X_{Y_{g_{n_k}+h}}), \varphi(h))=0.
\end{align*}
Assume now that $g_n\to\infty$ as $n\to\infty$. Then for each fixed $h\in\mathbb{R}$ we have $h+g_{{n_k}}\to\infty$ as $k\to\infty$, so with Theorem \ref{Theorem_stat_Z} we have $\lim\limits_{k\to\infty}d_{LP}(\mathcal{L}(X_{Y_{g_{n_k}+h}}), \rho)=0$. Hence $\rho=\varphi(h)$ in $\mathcal{P}(\mathbb{R})$ for all $h\in\mathbb{R}$. Now let $g\in\mathbb{R}$, then
\begin{align*}
    d_{LP}(\mathcal{L}(X_{Y_g}),\rho)=d_{LP}(\mathcal{L}(X_{Y_{(g-g_{n_k})+g_{n_k}}}),\rho)\leq \sup_{h\in \mathbb{\mathbb{R}}}d_{LP}(\mathcal{L}(X_{Y_{h+g_{n_k}}}),\rho)\underset{k\to\infty}{\longrightarrow} 0.
\end{align*}
Hence, $d_{LP}(\mathcal{L}(X_{Y_g}),\rho)=0$ for all $g$, so all 1-dimensional marginals of $X_Y$ coincide. The same argument can be applied to all finite-dimensional marginals and thus we conclude that the process $X_Y$ is strictly stationary.
\end{proof}
\noindent
We now give an example which shows that the non-degeneracy condition regarding the subordinator $Y$ in Theorem \ref{Theorem_stat_Z} and thus also Corollary \ref{cor_ap_stat} cannot simply be omitted. Furthermore we give an example of a stochastic process which is almost periodic, but the subordinated process is not almost periodic for any subordinator $Y$, satisfying $|\widehat{\mathcal{L}}(Y_1)(z)|\ne 1$ for all $z\ne 0$. 
\begin{example}\label{exmp_non_ap_stat}
\begin{enumerate}
    \item Let $X_t=\cos(t)$, then the process defined by $X=(X_t)_{t\in\mathbb{R}}$ is $2\pi$-periodic, hence almost periodic in distributional sense, but not strictly stationary. Define the subordinator $Y$ by $Y_g=g$ for $g\in\mathbb{R}$, then $|\mathcal{L}(Y_1)(z)|=1$ for all $z\in\mathbb{R}$ and $X_Y=X$ is almost periodic in distributional sense, but not strictly stationary. Letting $g\to\infty$, we see that $X_{Y_g}$ cannot converge in distribution, since $X_{Y_{2\pi k}}=1$ and $X_{Y_{(2k+1)\pi}}=-1$ for all $k\in\mathbb{N}$.
    \item Let $X$ be as in the previous example. Let $Y$ be any subordinator that satisfies $|\widehat{\mathcal{L}}(Y_1)(z)|\ne 1$ for all $z\ne 0$. Assume $X_Y$ is almost periodic in distributional sense, then the function $f:\mathbb{R}\to \mathbb{R}$ defined by 
    \begin{align*}
        f(g):=\mathbb{E}(X_{Y_g})=\mathbb{E}(\cos(Y_g))
    \end{align*}
    is almost periodic, since the process is $L_1$-uniformly integrable (c.f. \cite[Remark 2.7]{BM-22}). Furthermore by Lemma \ref{lemma1},
    \begin{align*}
        f(g)=\mathbb{E}(\cos(Y_g))\underset{g\to\infty}{\longrightarrow}M(\cos(\cdot))=0.
    \end{align*}
    However if an almost periodic function has a limit for $g\to\infty$, then it has to be constant, yielding that $f(g)=0$ for all $g\in\mathbb{R}$. To see this choose any sequence $(g_n)_{n\in\mathbb{N}}$ such that $g_n\to\infty$. By the definition of almost periodicity, there exists a subsequence $(g_{n_k})_{k\in\mathbb{N}}$ and a function $h$, such that $f(\cdot+g_{n_k})\underset{k\to\infty}{\longrightarrow}h$ uniformly. Since we also have $f(g+g_{n_k})\underset{k\to\infty}{\longrightarrow}0$, we conclude $h(g)=0$ for all $g\in\mathbb{R}$. Moreover, since $||f||_\infty=||f(\cdot+g_{n_k})||_\infty\underset{k\to\infty}{\longrightarrow}0$, we conclude $f(g)=0$ for all $g\in\mathbb{R}$. However $f(0)=\mathbb{E}(\cos(Y_0))=1$ and thus the assumption that $X_Y$ is almost periodic in distributional sense cannot hold.
\end{enumerate}
\end{example}
\section{Discrete-time asymptotics of sampled processes}
We now look at the discrete-time asymptotic behaviour of the sampled process $(X_{Y_k})_{k\in\mathbb{Z}}$. We derive an $L_2$-law of large numbers for sampled almost periodic processes and later in the section a central limit theorem for sampled $m$-dependent almost periodic processes, with the limiting mean and variance naturally expressed in terms of the stationary limit process constructed in section 3.\\
First we recall a theorem which is due to Robbins \cite[Theorem 1 and Theorem 3]{Robbins1953}.
\begin{theorem}(Robbins)\label{thrm_robbins}
    Let $(\psi_n)_{n\in\mathbb{N}}$ be a sequence of i.i.d. random variables and $S_n:=\sum_{k=1}^n\psi_k$. Let further $\widehat{\mathcal{L}}(\psi_1)(z)\ne 1$ for all $z\ne 0$, then for $f\in AP(\mathbb{R},\mathbb{C})$ 
    \begin{align*}
        \lim\limits_{n\to\infty}\frac{1}{n}\sum_{k=1}^n f(S_k) \underset{n\to\infty}{\overset{a.s.}{\longrightarrow}}M(f),
    \end{align*}
    with $M(f)$ being the Bohr mean, defined in Lemma \ref{lemma1}.\\
    If $(Y_t)_{t\geq 0}$ is a stochastic process with stationary increments such that for every $z\ne 0$ there exists a $\delta>0$ for which
    \begin{align*}
        \widehat{\mathcal{L}}(Y_t-Y_0)(z)=\mathcal{O}(t^{-\delta}),\qquad \text{ as }t\to\infty,
    \end{align*}
    then for $f\in AP(\mathbb{R},\mathbb{C})$
    \begin{align*}
        \lim\limits_{T\to\infty}\frac{1}{T}\int_0^Tf(Y_t)\,dt\underset{T\to\infty}{\overset{a.s.}{\longrightarrow}}M(f).
    \end{align*}
\end{theorem}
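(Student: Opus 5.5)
Since this is Robbins' classical result, which the paper only quotes, I would give the standard second-moment argument of Weyl type. In both parts I first reduce to pure exponentials. Given $f\in AP(\mathbb{R},\mathbb{C})$ and $\varepsilon>0$, Remark \ref{remark_equiv_bohr_ap}(ii) gives $f_\varepsilon(x)=\sum_{k=0}^n\alpha_ke^{i\lambda_kx}\in\mathcal{T}$ with $\|f-f_\varepsilon\|_\infty<\varepsilon$, where $\lambda_0=0$ and $\lambda_k\neq0$ for $k\ge1$. Pathwise, both the empirical averages and $M$ change by at most $\varepsilon$ when $f$ is replaced by $f_\varepsilon$, and $M(f_\varepsilon)=\alpha_0$. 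Intersecting countably many almost sure events (one for each $\lambda_k$ and each $\varepsilon=1/m$), it therefore suffices to show, for each fixed $\lambda\neq0$,
\begin{align*}
\frac1n\sum_{k=1}^n e^{i\lambda S_k}\overset{a.s.}{\longrightarrow}0
\qquad\text{and}\qquad
\frac1T\int_0^T e^{i\lambda Y_t}\,dt\overset{a.s.}{\longrightarrow}0 .
\end{align*}

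\emph{Discrete case.} Put $\varphi:=\widehat{\mathcal{L}}(\psi_1)(\lambda)\neq1$. If $|\varphi|=1$, then $e^{i\lambda\psi_1}=\varphi$ almost surely, so $e^{i\lambda S_k}=\varphi^k$ almost surely. The average is then a geometric sum bounded by $2/(n|1-\varphi|)\to0$, deterministically. If instead $|\varphi|<1$, write $A_n:=\frac1n\sum_{k\le n}e^{i\lambda S_k}$. Independence of the increments gives
\begin{align*}
\mathbb{E}|A_n|^2=\frac1{n^2}\sum_{j,k\le n}\mathbb{E}e^{i\lambda(S_k-S_j)},
\end{align*}
where each summand is $\varphi^{k-j}$ or $\overline{\varphi}^{\,j-k}$. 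Hence $\mathbb{E}|A_n|^2\le \frac1{n^2}\sum_{j,k}|\varphi|^{|j-k|}\le C/n$. Along $n=m^2$ this is summable, so Chebyshev's inequality and Borel--Cantelli give $A_{m^2}\to0$ almost surely. For $m^2\le n<(m+1)^2$, boundedness of the summands gives $|A_n-A_{m^2}|\le 2((m+1)^2-m^2)/m^2=O(1/m)$, which fills the gaps.

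\emph{Continuous case.} I would repeat the same scheme with $A_T:=\frac1T\int_0^Te^{i\lambda Y_t}\,dt$; measurability follows from joint measurability of $(t,\omega)\mapsto Y_t(\omega)$. By Fubini and stationarity of increments, $\mathbb{E}e^{i\lambda(Y_t-Y_s)}$ equals $\widehat{\mathcal{L}}(Y_{t-s}-Y_0)(\lambda)$ for $t\ge s$ and its conjugate otherwise. Hence
\begin{align*}
\mathbb{E}|A_T|^2\le\frac{1}{T^2}\int_0^T\!\!\int_0^T\min\{1,C|t-s|^{-\delta}\}\,ds\,dt=O(T^{-\delta}),
\end{align*}
where I assume without loss of generality that $\delta<1$. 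Choosing $T_m:=m^{2/\delta}$ makes $\sum_m\mathbb{E}|A_{T_m}|^2<\infty$, so $A_{T_m}\to0$ almost surely. For $T_m\le T\le T_{m+1}$ one has $|A_T-A_{T_m}|\le 2(T_{m+1}-T_m)/T_m\to0$, because $T_{m+1}/T_m\to1$.

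The main obstacle is this gap-filling together with the merely polynomial variance decay in the continuous case. The subsequence must be sparse enough for Borel--Cantelli yet have ratios $T_{m+1}/T_m\to1$, and the polynomial choice $T_m=m^{2/\delta}$ achieves both. In the discrete case the only subtle point is that the hypothesis is $\varphi\neq1$ rather than $|\varphi|\neq1$, which is why the unimodular case is handled separately by the geometric-sum argument.
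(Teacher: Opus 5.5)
Your proof is correct. The paper gives no argument for this statement; it only quotes Theorems 1 and 3 of Robbins' 1953 paper, so there is no internal proof to compare with.

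What you wrote is a complete, self-contained version of the standard second-moment route to Robbins' theorem:
\begin{itemize}
\item Uniform approximation by trigonometric polynomials reduces a fixed $f$ to countably many characters $e^{i\lambda\cdot}$ with $\lambda\neq0$.
\item Independence of increments gives $\mathbb{E}|A_n|^2=O(n^{-1})$; stationarity of increments gives $\mathbb{E}|A_T|^2=O(T^{-\delta})$.
\item Borel--Cantelli along a polynomially growing subsequence whose consecutive ratios tend to $1$, followed by deterministic gap-filling, upgrades this to almost sure convergence.
\end{itemize}
Your estimates check out, in particular $|A_n-A_{m^2}|\le 2(2m+1)/m^2$ and the reduction to $\delta<1$.

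Two minor remarks. First, the case split in the discrete part is unnecessary. For every $\varphi\neq1$ with $|\varphi|\le1$,
\[
\Bigl|\sum_{h=1}^{n-1}(n-h)\varphi^h\Bigr|=\Bigl|\sum_{k=1}^{n-1}\frac{\varphi(1-\varphi^k)}{1-\varphi}\Bigr|\le\frac{2n}{|1-\varphi|},
\]
so $\mathbb{E}|A_n|^2\le\frac1n\bigl(1+\frac{4}{|1-\varphi|}\bigr)$ in one stroke. Your separate geometric-sum treatment of $|\varphi|=1$ is nevertheless correct. You are also right that this case genuinely occurs under the stated hypothesis $\widehat{\mathcal{L}}(\psi_1)(z)\neq1$: take $\psi_1$ uniform on $\{\sqrt2,\sqrt2+2\pi\}$ and $z=1$.

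Second, joint measurability of $(t,\omega)\mapsto Y_t(\omega)$ is an implicit hypothesis of the continuous-time part. It is needed both to define the integral and for your Fubini step, as you note.
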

\noindent
If we assume $(Y_g)_{g\in\mathbb{Z}}$ to be a random sampling with $|\widehat{\mathcal{L}}(Y_1)(z)|<1$ for all $z\ne 0$, then the assumptions of Theorem \ref{thrm_robbins} are satisfied. Since also $\left|\frac{1}{n}\sum_{k=1}^nf(Y_k)-M(f)\right|^2\leq 4||f||_\infty^2<\infty$, Lebesgue's dominated convergence theorem gives
\begin{align*}
    \lim\limits_{n\to\infty}\frac{1}{n}\sum_{k=1}^n f(Y_k) \underset{n\to\infty}{\overset{L_2}{\longrightarrow}}M(f).
\end{align*}
Similarly one can show that in the case of $\mathbb{G}=\mathbb{R}$ it holds
\begin{align*}
    \lim\limits_{T\to\infty}\frac{1}{T}\int_0^Tf(Y_t)\,dt\underset{T\to\infty}{\overset{L_2}{\longrightarrow}}M(f).
\end{align*}
Since for $g<0$, $f(Y_g)=f(-Y'_{-g})$ a.s. and $M(f(-\cdot)=M(f)$, we can also derive
\begin{align*}
    \lim\limits_{n\to\infty}\frac{1}{n}\sum_{k=1}^n f(Y_{-k}) \underset{n\to\infty}{\overset{L_2}{\longrightarrow}}M(f),\quad \text{ as well as } \quad \lim\limits_{T\to\infty}\frac{1}{T}\int_0^Tf(Y_{-t})\,dt\underset{T\to\infty}{\overset{L_2}{\longrightarrow}}M(f).
\end{align*}
\noindent
With these $L_2$-limits in place, we are now able to prove an $L_2$-Law of Large Numbers for the sampled process $X_{Y}$.

\begin{theorem}
\label{LLN_sampled}
    Let $(X_t)_{t\in\mathbb{R}}$ be a stochastic process, which is almost periodic in distributional sense, with $a_X(h):= \sup\limits_{t\in\mathbb{R}}|\rm{Cov}(X_t,X_{t+h})|\underset{h\to\infty}{\longrightarrow}0$, and $\sup\limits_{t\in \mathbb{R}}\mathbb{E}X_t^2<\infty$. Let $(Y_g)_{g\in\mathbb{Z}}$ be a random sampling, independent of $X$, such that $|\widehat{\mathcal{L}}(Y_1)(z)|\ne 1$ for all $z\ne 0$. Define $f_X(t):=\mathbb{E}(X_t)$. Then
    \begin{align*}
        \frac1n \sum_{k=1}^n X_{Y_k}\underset{n\to\infty}{\overset{L_2}{\longrightarrow}}M(f_X), \text{ as well as } \frac1n \sum_{k=1}^n X_{Y_{-k}}\underset{n\to\infty}{\overset{L_2}{\longrightarrow}}M(f_X).
    \end{align*}
    Denoting $Z=(Z_g)_{g\in\mathbb{Z}}$ the process obtained in Theorem \ref{Theorem_stat_Z}, this can be rewritten as
    \begin{align*}
        \frac1n \sum_{k=1}^n X_{Y_k}\underset{n\to\infty}{\overset{L_2}{\longrightarrow}}\mathbb{E}(Z_0) \text{ as well as } \frac1n \sum_{k=1}^n X_{Y_{-k}}\underset{n\to\infty}{\overset{L_2}{\longrightarrow}}\mathbb{E}(Z_0).
    \end{align*}
\end{theorem}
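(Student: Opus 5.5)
Write $S_n:=\frac1n\sum_{k=1}^n X_{Y_k}$. The plan is to split
\[
S_n-M(f_X)=\frac1n\sum_{k=1}^n\bigl(X_{Y_k}-f_X(Y_k)\bigr)+\Bigl(\frac1n\sum_{k=1}^n f_X(Y_k)-M(f_X)\Bigr)=:A_n+B_n
\]
and to show $\mathbb{E}A_n^2\to0$ and $\mathbb{E}B_n^2\to0$.

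\textbf{The term $B_n$.} First check that $f_X\in AP(\mathbb{R},\mathbb{C})$. Since $\sup_t\mathbb{E}X_t^2<\infty$, the family $(X_t)$ is $L_1$-uniformly integrable. By \cite[Remark 2.7]{BM-22} (as already used in Example \ref{exmp_non_ap_stat}), the mean function of an $L_1$-uniformly integrable process that is almost periodic in distributional sense is Bohr almost periodic. The $L_2$-version of Robbins' theorem stated right after Theorem \ref{thrm_robbins} then gives $B_n\to0$ in $L_2$. The same statement along $Y_{-k}$ handles the backward sums.

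\textbf{The term $A_n$.} Condition on $Y$ and use independence of $X$ and $Y$. Then
\[
\mathbb{E}A_n^2=\frac1{n^2}\sum_{j,k=1}^n\mathbb{E}\bigl[c(Y_j,Y_k)\bigr],\qquad c(s,t):=\mathrm{Cov}(X_s,X_t).
\]
Here joint measurability of $(t,\omega)\mapsto X_t(\omega)$ makes this Fubini step legitimate. We have $|c(s,t)|\le a_X(|t-s|)$ and $|c|\le C:=\sup_t\mathbb{E}X_t^2$. Given $\varepsilon>0$, choose $H$ with $a_X(h)<\varepsilon$ for $h\ge H$. Then
\[
\mathbb{E}A_n^2\le\varepsilon+\frac{C}{n^2}\sum_{j,k=1}^n P\bigl(|Y_k-Y_j|<H\bigr).
\]
For $j<k$ we have $Y_k-Y_j\overset{d}{=}Y_{k-j}$, so the double sum is at most
\[
n+2n\sum_{m=1}^{n}P(Y_m<H).
\]
It therefore suffices to show $P(Y_m<H)\to0$ as $m\to\infty$, since then its Cesàro average vanishes. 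This holds because the $\psi_i$ are positive, so $Y_m=\sum_{i\le m}\psi_i\to\infty$ a.s. (strong law, allowing infinite mean, or simply $P(\psi_1>\delta)>0$ for some $\delta>0$ together with Borel–Cantelli). Hence $\limsup_n\mathbb{E}A_n^2\le\varepsilon$. The backward case is identical, with $Y_{-k}=-Y'_k$.

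\textbf{Identification of the limit and main obstacle.} To rewrite the limit as $\mathbb{E}Z_0$, use Remark \ref{remark_after_stat_thrm}(iii): $L_2$-boundedness gives $L_1$-uniform integrability, hence $\mathbb{E}X_{Y_g}\to\mathbb{E}Z_0$. Also $\mathbb{E}X_{Y_g}=\mathbb{E}f_X(Y_g)\to M(f_X)$ by Lemma \ref{lemma1}, so $M(f_X)=\mathbb{E}Z_0$. The main obstacle I expect is justifying that $f_X$ is almost periodic (continuity and Bochner almost periodicity from convergence in $d_{LP}$ combined with uniform integrability). If citing \cite{BM-22} is not enough, I would prove it directly: truncate at level $K$, note that $\mathbb{E}\phi_K(X_t)$ is almost periodic for bounded continuous $\phi_K$, and pass to the uniform limit in $K$ using the second-moment bound.
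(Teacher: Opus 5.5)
Your proposal is correct and follows essentially the same route as the paper. Both split into a centered part $\frac1n\sum(X_{Y_k}-f_X(Y_k))$ and a Robbins-type part. Both bound the covariances by $a_X$ with a cutoff and Ces\`aro averaging. Both identify $M(f_X)=\mathbb{E}Z_0$ via Remark \ref{remark_after_stat_thrm}(iii) and Lemma \ref{lemma1}. The only differences are minor: you bound $|c(Y_j,Y_k)|$ directly by $\varepsilon+C\One_{\{|Y_k-Y_j|<H\}}$, which sidesteps measurability of $a_X$, and you treat the backward sums directly via $Y_{-k}=-Y'_k$ rather than through the reflected process $\widetilde X_t=X_{-t}$.
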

\begin{proof}
    By Remark 2.7 as well as the proof of Proposition 2.8 d) in \cite{BM-22} and the fact that the assumption $\sup\limits_{t\in\mathbb{R}}\mathbb{E}(X_t^2)<\infty$ implies uniform integrability of $X$, we see that $t\mapsto\mathbb{E}X_t$ is almost periodic. Furthermore, it is clear that 
    \begin{equation}\label{eq_thrm_lln_sampled}
        \sup_{k\in\mathbb{N}}\mathbb{E} X_{Y_k}^2\leq\sup_{t\in\mathbb{R}} \int\limits_{[0,\infty)}\mathbb{E}X_{s}^2\,P_{Y_t}(ds)\leq \sup_{s\in \mathbb{R}} \mathbb{E}X_s^2.
    \end{equation}
    It holds that
    \begin{align*}
       \mathbb{E} \left|\frac{1}{n}\sum\limits_{k=1}^nX_{Y_k}-M(f_X) \right|^2\leq 2 \mathbb{E} \left|\frac{1}{n}\sum\limits_{k=1}^n(X_{Y_k}-f_X(Y_k))\right|^2+ 2\mathbb{E}\left|\frac{1}{n}\sum\limits_{k=1}^nf_X(Y_k) -M(f_X) \right|^2.       
    \end{align*}
    Using Theorem \ref{thrm_robbins} we get
    \begin{align*}
        \mathbb{E}\left|\frac{1}{n}\sum\limits_{k=1}^nf_X(Y_k) -M(f_X) \right|^2\underset{n\to\infty}{\longrightarrow}0.
    \end{align*}
    Define $\xi_t=X_{t}-\mathbb{E}X_{t}=X_{t}-f_X(t)$, then
    \begin{align*}
        \mathbb{E} \left|\frac{1}{n}\sum\limits_{k=1}^n\xi_{Y_k}\right|^2=\frac{1}{n^2}\sum\limits_{k,l=1}^n \mathbb{E} \xi_{Y_k}\xi_{Y_l} =\frac{2}{n^2}\sum\limits_{k=1}^n\sum\limits_{l=1}^{k-1} \mathbb{E} \xi_{Y_k}\xi_{Y_l} + \frac{1}{n^2}\sum\limits_{k=1}^n \mathbb{E} \xi_{Y_k}^2.
    \end{align*}
    Using (\ref{eq_thrm_lln_sampled}), we see that the last sum goes to $0$ as $n\to\infty$, so we only need to look at the first sum on the right hand side. It is clear that
    \begin{align*}
        |\mathbb{E}(\xi_{t}\xi_{t+h})|\leq a_X(h).
    \end{align*}
    Thus we estimate for $k>l$, with $\tilde{Y}=(\tilde{Y}_t)_{t\in\mathbb{R}}$ being an independent copy of $Y$,
    \begin{align*}
        &|\mathbb{E}(\xi_{Y_k}\xi_{Y_l})|=|\mathbb{E}(\xi_{Y_l+\tilde{Y}_{k-l}}\xi_{Y_l})|\\
        \leq&\int_{[0,\infty)^2}|\mathbb{E}(\xi_{t+h}\xi_{t})|P_{\tilde{Y}_{k-l}}(dh)P_{Y_l}(dt) \leq\int_{[0,\infty)^2}a_X(h)P_{\tilde{Y}_{k-l}}(dh)P_{Y_l}(dt)=\mathbb{E}(a_X(\tilde{Y}_{k-l})).
    \end{align*}
    Hence
    \begin{align*}
        \left|\frac{2}{n^2}\sum\limits_{k=1}^n\sum\limits_{l=1}^{k-1} \mathbb{E} \xi_{Y_k}\xi_{Y_l}\right|\leq& \frac{2}{n^2} \sum_{j=1}^{n-1}(n-j)\mathbb{E}(a_X(\tilde{Y}_{j}))\leq\frac{2}{n}\sum_{j=1}^{n-1}\mathbb{E}(a_X(\tilde{Y}_{j})).
    \end{align*}
    Let $\varepsilon>0$ and choose $R>0$ such that $a_X(r)<\varepsilon$ for all $r> R$, then
    \begin{align*}
        \mathbb{E}(a_X(\tilde{Y}_{k}))=& \mathbb{E}(a_X(\tilde{Y}_{k})\One_{\tilde{Y}_{k}>R})+\mathbb{E}(a_X(\tilde{Y}_{k})\One_{\tilde{Y}_{k}\leq R})\\
        \leq& \varepsilon+\sup\limits_{t\in\mathbb{R}}\mathbb{E}(X_t^2) P(\tilde{Y}_{k}\leq R),
    \end{align*}
    where we used that $|\rm{Cov}(X_t,X_{t+h})|\leq \sqrt{\text{Var}(X_t)}\sqrt{\text{Var}(X_{t+h})}\leq \sup\limits_{t\in\mathbb{R}}\mathbb{E}(X_t^2)$. But $\lim\limits_{k\to\infty}P(\tilde{Y}_{k}\leq R)=0$, hence $\mathbb{E}(a_X(\tilde{Y}_{k}))\underset{k\to\infty}{\longrightarrow}0$. Thus by the Césaro Lemma we obtain
    \begin{align*}
        \frac{1}{n}\sum_{j=1}^n \mathbb{E}(a_X(\tilde{Y}_{j}))\underset{n\to\infty}{\longrightarrow}0,
    \end{align*}
    and therefore $\frac1n \sum_{k=1}^n X_{Y_k}\underset{n\to\infty}{\overset{L_2}{\longrightarrow}}M(f_X).$\\
    It remains to show that $M(f_X)=\mathbb{E}(Z_0)$. As was already mentioned, the assumption $\sup\limits_{t\in\mathbb{R}}\mathbb{E}(X_t^2)<\infty$, implies that $X$ is $L_1$-uniformly integrable and hence, Remark \ref{remark_after_stat_thrm}~(iii) shows that
    \begin{align*}
        \mathbb{E}(X_{Y_k})\underset{k\to\infty}{\longrightarrow}\mathbb{E}(Z_0),
    \end{align*}
    Since $X$ and $Y$ are independent, we obtain 
    \begin{align*}
        \mathbb{E}(X_{Y_k})=\int\limits_{[0,\infty)}\mathbb{E}(X_t)\, P_{Y_k}(dt)=\int\limits_{[0,\infty)}f_X(t)\, P_{Y_k}(dt)=\mathbb{E}(f_X(Y_k)).
    \end{align*}
   However with Lemma \ref{lemma1} we see that
    \begin{align*}
        \mathbb{E}(f_X(Y_k))\underset{k\to\infty}{\longrightarrow}M(f_X).
    \end{align*}
    So putting everything together we get $M(f_X)=\mathbb{E}(Z_0)$.\\
    The negative case follows by applying the positive case to the process $\widetilde{X}$ defined by $\widetilde{X}_t:=X_{-t}$. It is clear that if $X$ satisfies the assumptions of Theorem \ref{LLN_sampled}, so does $\widetilde{X}$. Indeed $a_{\widetilde{X}}(h)=\sup\limits_{t\in\mathbb{R}}|\rm{Cov}(\widetilde{X}_t,\widetilde{X}_{t+h})|=\sup\limits_{t\in\mathbb{R}}|\text{Cov}(X_{-t},X_{-t-h})|=\sup\limits_{s\in\mathbb{R}}|\rm{Cov}(X_{s+h},X_s)|=a_X(h)$. Furthermore $X_{Y_{-k}}\overset{a.s.}{=}X_{-Y'_{k}}=\widetilde{X}_{Y_k'}$. Since $f_{\widetilde{X}}(t)=f_X(-t)$, we obtain $M(f_{\widetilde{X}})=M(f_X)$ and thus the result follows also for the negative case.
\end{proof}

\begin{remark}
    A continuous time analog of Theorem \ref{LLN_sampled} can also be proven. The statement in the continuous case is the following:\\
    Under the assumptions of Theorem \ref{LLN_sampled}, we obtain
    \begin{align*}
        \frac1T\int_0^T X_{Y_{t}}\,dt \underset{T\to\infty}{\overset{L_2}{\longrightarrow}} M(f_X)=\mathbb{E}(Z_0), \text{ as well as } \frac1T\int_0^T X_{Y_{-t}}\,dt \underset{T\to\infty}{\overset{L_2}{\longrightarrow}} M(f_X)=\mathbb{E}(Z_0).
    \end{align*}
    The method is basically the same, if we interchange summation with integration and use an integral version of the Césaro Lemma. 
\end{remark}
\noindent
Before we come to the proof of the central limit theorem for the sampled process $X_Y$, we note the following.\\
Let $(Y_t)_{t\in\mathbb{Z}}$ be a non-trivial random sampling and $m\in\mathbb{N}_0$, then for $t\in\mathbb{N}, s>0$ 
\begin{align}\label{exp_decay_coeff}
    P(Y_t\leq m)=P(e^{-Y_t s}\geq e^{-ms})\leq e^{ms}\mathbb{E}(e^{-Y_t s})=e^{ms}(\mathbb{E}(e^{-s\psi_1}))^t=e^{ms-\Phi(s)t},
\end{align}
with $\Phi$ being defined as $\Phi(s):=-\log(\mathbb{E}(e^{-s\psi_1}))$. Since the increment $\psi_1$ is positive, we get $\Phi(s)>0$, in particular $P(Y_t\leq m)$ decays exponentially as $t\to\infty$. \\
\begin{definition}
Let $m\geq 0$. A stochastic process
$X=(X_t)_{t\in\mathbb R}$ is called \emph{$m$-dependent} if, for every
$s\in\mathbb R$, the $\sigma$-algebras
\begin{align*}
    \sigma(X_t:t\leq s)
    \qquad\text{and}\qquad
    \sigma(X_t:t>s+m)    
\end{align*}
are independent.
\end{definition}
\noindent
We can now formulate a central limit theorem for randomly sampled $m$-dependent processes that are almost periodic in distributional sense.
\begin{theorem}\label{CLT_sampled}
    Let $(X_t)_{t\in\mathbb{R}}$ be a stochastic process, which is almost periodic in distributional sense, $m$-dependent for some $m\in\mathbb{N}$ and $L_2$-uniformly integrable. Let $(Y_g)_{g\in\mathbb{Z}}$ be a random sampling such that $|\widehat{\mathcal{L}}(Y_1)(z)|\ne 1$ for all $z\ne 0$. Assume further that $Y$ and $X$ are independent. Define 
    \begin{align*}
        f_X(t):=\mathbb{E}(X_t), \qquad \xi_t:=X_t-f_X(t),
    \end{align*}
    and let $Z$ be the stationary limit process from Theorem \ref{Theorem_stat_Z}, applied to the almost periodic in distributional sense process $(\xi_t)_{t\in\mathbb{R}}$.
    Then
    \begin{align}\label{abs_conv_acf}
        \sum_{h=1}^\infty|\gamma_Z(h)|<\infty \quad\text{ and }\quad \sigma^2:=\gamma_Z(0)+2\sum_{h=1}^\infty \gamma_Z(h)=\sum_{h\in\mathbb{Z}}\gamma_Z(h)\geq 0,
    \end{align}
    where $\gamma_Z$ denotes the autocovariance function of $Z$. Furthermore:
    \begin{enumerate}
        \item If $\sigma^2>0$, then
        \begin{align*}
            \frac{\sum_{k=1}^n (X_{Y_k}-f_X(Y_k))}{\sqrt{n}\sigma}\underset{n\to\infty}{\overset{d}{\longrightarrow}}N(0,1) \text{ as well as }\frac{\sum_{k=1}^n (X_{Y_{-k}}-f_X(Y_{-k}))}{\sqrt{n}\sigma}\underset{n\to\infty}{\overset{d}{\longrightarrow}}N(0,1).
        \end{align*}
        \item If $\sigma^2=0$, then
        \begin{align*}
            \frac{\sum_{k=1}^n (X_{Y_k}-f_X(Y_k))}{\sqrt{n}}\underset{n\to\infty}{\overset{L_2}{\longrightarrow}}0 \text{ as well as }\frac{\sum_{k=1}^n (X_{Y_{-k}}-f_X(Y_{-k}))}{\sqrt{n}}\underset{n\to\infty}{\overset{L_2}{\longrightarrow}}0.
        \end{align*}
    \end{enumerate}
\end{theorem}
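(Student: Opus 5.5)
Write $S_n:=\sum_{k=1}^n \xi_{Y_k}$. The plan is to reduce the sampled sum to a CLT for a triangular array of $m'$-dependent-type blocks, after conditioning on the sampling $Y$.

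\textbf{Step 1: the autocovariances of $Z$.}
Since $\xi$ is almost periodic in distributional sense, $L_2$-uniformly integrable and centered, Remark \ref{remark_after_stat_thrm}~(iii) gives
\begin{align*}
\gamma_Z(h)=\lim_{k\to\infty}\mathrm{Cov}(\xi_{Y_{k+h}},\xi_{Y_k}).
\end{align*}
By $m$-dependence and independence of $X$ and $Y$,
\begin{align*}
|\mathbb{E}\,\xi_{Y_{k+h}}\xi_{Y_k}|\le \sup_t \mathbb{E}\xi_t^2 \cdot P(Y_h\le m)\le C e^{ms-\Phi(s)h},
\end{align*}
using \eqref{exp_decay_coeff}. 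Hence $|\gamma_Z(h)|\le Ce^{ms-\Phi(s)h}$, which is summable. Nonnegativity of $\sigma^2$ follows because $\sigma^2=\lim_n \frac1n\mathrm{Var}(\sum_{k=1}^n Z_k)\ge 0$, by stationarity and absolute summability.

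\textbf{Step 2: the variance of $S_n/\sqrt n$ converges to $\sigma^2$.}
Expand
\begin{align*}
\frac1n\mathbb{E}S_n^2=\frac1n\sum_{k}\mathbb{E}\xi_{Y_k}^2+\frac2n\sum_{h\ge1}\sum_{k}\mathbb{E}\xi_{Y_{k+h}}\xi_{Y_k}.
\end{align*}
The exponential bound from Step 1 dominates the terms uniformly in $k$. For each fixed $h$, Ces\`aro averaging of the convergent sequence $k\mapsto\mathbb{E}\xi_{Y_{k+h}}\xi_{Y_k}$ gives the limit $\gamma_Z(h)$. Dominated convergence in $h$ then yields $\frac1n\mathbb{E}S_n^2\to\sigma^2$.

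This already proves (ii) when $\sigma^2=0$. The negative-index statements follow as in Theorem \ref{LLN_sampled}, via $\widetilde X_t=X_{-t}$, which is again $m$-dependent, almost periodic in distribution and uniformly integrable. One checks that it yields the same $Z$-covariances, since $\gamma_Z$ is expressed through $M$ of covariance functions, which is reflection invariant.

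\textbf{Step 3: the CLT when $\sigma^2>0$.}
Condition on $Y$. Given $Y=y$, the variables $\xi_{y_k}$ form an independent-block structure: indices $k<l$ with $y_l-y_k>m$ are independent. Use a Bernstein big-block/small-block scheme with block lengths $p_n\to\infty$, $q_n=\lfloor c\log n\rfloor$, $p_n=o(\sqrt n)$.

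On the event $A_n$ that every small block of $q_n$ consecutive increments has $Y$-span exceeding $m$, the big-block sums are conditionally independent. By a union bound and \eqref{exp_decay_coeff}, $P(A_n^c)\le n e^{ms-\Phi(s)q_n}\to0$ for $c$ large. The small blocks contribute $o(1)$ in $L_2$ by the covariance bound. For the big blocks, apply the Lindeberg CLT conditionally.

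The required inputs are:
\begin{itemize}
\item[(a)] the conditional variance $\frac1n\sum_j\mathrm{Var}(B_j\mid Y)\to\sigma^2$ in probability;
\item[(b)] the Lindeberg condition, which follows from $L_2$-uniform integrability of $\xi$, with blocks of size $o(\sqrt n)$ handled by truncation.
\end{itemize}

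\textbf{Main obstacle: the conditional variance in (a).}
Write
\begin{align*}
\frac1n\sum_{k,l}\mathbb{E}(\xi_{Y_k}\xi_{Y_l}\mid Y)=\frac1n\sum_{h}\sum_k c(Y_k,Y_{k+h}-Y_k),
\end{align*}
where $c(t,u)=\mathbb{E}\xi_{t+u}\xi_t$ and $c(\cdot,u)$ is almost periodic in $t$.

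For fixed $h$, the pair process $(Y_k,\,Y_{k+h}-Y_k)$ should be handled by a Robbins-type ergodic argument. I would first approximate $c(\cdot,u)$ uniformly in $u\in[0,m]$ by finitely many almost periodic functions, using compactness as in Lemma \ref{rel_compact_char}. I would then apply Theorem \ref{thrm_robbins} after splitting $k$ into residue classes mod $h$, so that $Y_k$ and the increment $Y_{k+h}-Y_k$ are independent along each class.

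This gives convergence of $\frac1n\sum_k c(Y_k,Y_{k+h}-Y_k)$ to $\mathbb{E}_u M(c(\cdot,u))=\gamma_Z(h)$ in $L_1$, and truncating $h$ via the exponential bound completes the argument. Unconditioning then yields $S_n/(\sqrt n\sigma)\to N(0,1)$.
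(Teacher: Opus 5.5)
Your Steps 1 and 2 match the paper's argument. Conditioning on $Y$ and using a Bernstein big-block/small-block scheme is a workable alternative to the paper's use of McLeish's mixingale CLT: on $A_n$ the big blocks are conditionally independent and centered, the small blocks are negligible in $L_2$, and Lindeberg follows by truncation because $p_n=o(\sqrt n)$.

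The gap is in (a), which is the real content of the proof. You want to approximate $c(\cdot,u)$ in $\|\cdot\|_\infty$, uniformly in $u\in[0,m]$, by finitely many almost periodic functions ``using compactness as in Lemma \ref{rel_compact_char}''. That lemma gives compactness in the Fourier variable $z$ for a \emph{fixed} lag vector $u$. It rests on equicontinuity of characteristic functions in $z$ (tightness plus Arzel\`a--Ascoli). Nothing analogous holds in the lag variable. Almost periodicity in distribution only controls how $\mathcal{L}(\xi_s,\xi_{s+u})$ depends on the common shift $s$ for each fixed $u$. Neither $m$-dependence, nor $L_2$-uniform integrability, nor joint measurability controls $\sup_t|c(t,u)-c(t,u')|$. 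Total boundedness in $\|\cdot\|_\infty$ is a strong property (it already fails for $\{\cos(\lambda\,\cdot):\lambda\in[1,2]\}$), so this step is unfounded. Moreover, Theorem \ref{thrm_robbins} only treats averages of $f(S_k)$, whereas $c(Y_k,Y_{k+h}-Y_k)$ depends on the forward increment. Along a residue class mod $h$ that increment is exactly the next step of the walk, so the residue-class split does not put you in Robbins' setting without a further martingale argument.

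The missing idea is to avoid any uniformity in $u$, as the paper does in Lemma \ref{lemma_moving_block_var_cond}. For fixed $h$ set $g_h(t):=\int c(t,u)\,P_{Y_h}(du)$ and split $c(Y_k,Y_{k+h}-Y_k)=D_k+g_h(Y_k)$. With $\mathcal{G}_k=\sigma(Y_1,\dots,Y_k)$, $D_k$ has zero $\mathcal{G}_k$-conditional mean and is $\mathcal{G}_{k+h}$-measurable. Hence $\mathbb{E}D_iD_j=0$ for $|i-j|\ge h$, and $\frac1n\sum_k D_k=O(\sqrt{h/n})$ in $L_2$. You never need $g_h$ to be almost periodic: by Corollary \ref{lemma_uniformity} for each fixed $u$ and dominated convergence,
\[
\sup_{x\in\mathbb{R}}\bigl|\mathbb{E}g_h(x+Y_r)-\gamma_Z(h)\bigr|\le\int\sup_{x\in\mathbb{R}}\bigl|\mathbb{E}c(x+Y_r,u)-M(c(\cdot,u))\bigr|\,P_{Y_h}(du)=:\varepsilon_h(r)\underset{r\to\infty}{\longrightarrow}0.
\]
Conditioning on $\mathcal{G}_i$ then gives $|\mathbb{E}[(g_h(Y_i)-\gamma_Z(h))(g_h(Y_j)-\gamma_Z(h))]|\le C\varepsilon_h(j-i)$, and Ces\`aro yields $\frac1n\sum_k g_h(Y_k)\to\gamma_Z(h)$ in $L_2$.

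With (a) repaired this way your route closes, and it is somewhat leaner than the paper's. It needs only the $k=0$ case of Lemma \ref{lemma_moving_block_var_cond}, with no supremum over block positions and no conditioning on $\mathcal{F}_{k-r}$ as in Lemma \ref{lemma_moving_block_cond_full}. Also, the argument only uses $|c|\le\sup_t\mathbb{E}\xi_t^2$, so truncation is needed only inside the Lindeberg check. You avoid truncating the whole process and proving $\sigma_N^2\to\sigma^2$.
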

\noindent
We will prove this theorem with a series of lemmata. First we establish the notion of an $L_2$-mixingale, which can be found in \cite[Definition 2.1]{McLeish1975}.
\begin{definition}[McLeish \cite{McLeish1975}]\label{def_mixingale}
    Let $(\mathcal{F}_j)_{j\in\mathbb Z}$ be a nondecreasing sequence of $\sigma$-algebras. A centered square-integrable sequence $(X_i)_{i\in\mathbb{N}}$ is an \emph{$L_2$-mixingale} if there is a positive nonincreasing sequence $\theta_r\downarrow0$ such that
    \begin{align}\label{mixingale_equation_1}
        ||{\mathbb{E}(X_i\mid\cF_{i-r})}||_{L_2}\leq\theta_r       
    \end{align}
    and
    \begin{align}\label{mixingale_equation_2}
        ||{X_i-\mathbb{E}(X_i|\mathcal{F}_{i+r})}||_{L_2}\leq\theta_{r+1}
    \end{align}
    for all $i\in \mathbb{N}$ and $r\in\mathbb{N}_0$.\\
    The sequence $(\theta_r)_{r\in\mathbb{N}_0}$ is said to be of \emph{size} $-\frac{1}{2}$, if there exists a sequence $(L_r)_{r\in\mathbb{N}_0}$ of positive numbers such that $\sum_{r=1}^\infty \frac{1}{rL_r}<\infty$, $L_r-L_{r-1}=O\left(\frac{L_r}{r}\right)$ as $r\to\infty$, $L_r$ is eventually non-decreasing and $\theta_r=o\left(\frac{1}{r^{1/2}L_r}\right)$ as $r\to\infty$. 
\end{definition}

\begin{proposition}[Consequence of McLeish's Theorem 2.6]\label{prop_McLeish}
    Let $((X_i)_{i\in\mathbb{N}},(\mathcal{F}_j)_{j\in\mathbb{Z}})$ be a centered $L_2$-mixingale whose coefficients $(\theta_r)_{r\in\mathbb{N}_0}$ have size $-1/2$. Define $S_n=\sum_{i=1}^n X_i$ and assume that
    \begin{enumerate}
      \item $(X_i)_{i\in\mathbb{N}}$ is $L_2$-uniformly integrable,
      \item $\frac{1}{n}\mathbb{E} S_n^2\underset{n\to\infty}{\longrightarrow}\sigma^2>0$,
      \item
      \begin{align*}
            \mathbb{E}\left(
           \frac{(S_{k+n}-S_k)^2}{n}
           \Big| \cF_{k-r}
         \right)
         \overset{L_1}{\longrightarrow}\sigma^2
      \end{align*}
      as $\min\{r,k,n\}\to\infty$.
    \end{enumerate}
    Then
    \begin{align*}
        \frac{S_n}{\sqrt n}\underset{n\to\infty}{\overset{d}{\longrightarrow}} N(0,\sigma^2).
    \end{align*}
\end{proposition}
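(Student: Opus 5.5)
The plan is to obtain the statement as a direct specialisation of McLeish's functional central limit theorem \cite[Theorem 2.6]{McLeish1975}, followed by evaluation of the limiting Brownian motion at time $1$.

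First I would align the definitions. McLeish's mixingale condition reads $\|\mathbb{E}(X_i\mid\cF_{i-r})\|_{L_2}\le c_i\psi_r$ and $\|X_i-\mathbb{E}(X_i\mid\cF_{i+r})\|_{L_2}\le c_i\psi_{r+1}$, for constants $c_i\ge 0$ and $\psi_r\downarrow 0$. Definition \ref{def_mixingale} is the special case $c_i\equiv 1$, $\psi_r=\theta_r$. The notion of size $-\frac12$ in Definition \ref{def_mixingale} is copied verbatim from McLeish's, so the size hypothesis transfers without change.

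Next I would translate the remaining hypotheses.
\begin{itemize}
\item With $c_i\equiv 1$, McLeish's uniform integrability requirement on $\{X_i^2/c_i^2\}$ is exactly assumption (i).
\item His negligibility condition on $\max_{i\le n}c_i^2/n$ becomes $1/n\to 0$, which holds trivially.
\item The variance normalisation $\frac1n\mathbb{E}S_n^2\to\sigma^2>0$ is assumption (ii).
\item The conditional variance stability condition, that $\mathbb{E}\bigl((S_{k+n}-S_k)^2/n\mid\cF_{k-r}\bigr)\to\sigma^2$ in $L_1$ as $\min\{r,k,n\}\to\infty$, is assumption (iii).
\end{itemize}
McLeish's theorem then gives the invariance principle $W_n(t):=S_{\lfloor nt\rfloor}/(\sigma\sqrt n)\Rightarrow W$ in $D[0,1]$, where $W$ is standard Brownian motion.

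Finally, the projection $\pi_1:D[0,1]\to\mathbb{R}$, $x\mapsto x(1)$, is continuous at every continuous path. Since $W$ has a.s.\ continuous paths, the continuous mapping theorem yields $S_n/(\sigma\sqrt n)=\pi_1(W_n)\overset{d}{\to}W_1\sim N(0,1)$. Multiplying by $\sigma$ gives $S_n/\sqrt n\overset{d}{\to}N(0,\sigma^2)$.

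I expect the main obstacle to be bookkeeping rather than substance. McLeish's theorem may be stated with a normalisation by $s_n^2=\mathbb{E}S_n^2$ instead of $\sigma^2 n$, or for triangular arrays. If it uses $s_n^2$, I would use (ii) to get $s_n/(\sigma\sqrt n)\to1$ and conclude with Slutsky's lemma. If it is formulated for arrays, I would take the array $X_{n,i}:=X_i/\sqrt n$ with $\cF_{n,j}:=\cF_j$. I would also check that the indexing of the filtration by $j\in\mathbb{Z}$ (so that $\cF_{i-r}$ makes sense for $r>i$) matches McLeish's convention.
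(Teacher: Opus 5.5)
Your proposal is correct and follows the same route as the paper. The paper's proof only remarks that the statement is an immediate consequence of McLeish's functional central limit theorem \cite[Theorem 2.6]{McLeish1975}; your specialisation to $c_i\equiv 1$, matching hypotheses (i)--(iii) to McLeish's conditions and evaluating the limiting Brownian motion at $t=1$ via the continuous mapping theorem, writes out exactly the step that remark leaves implicit.
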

\begin{proof}
    Actually McLeish \cite[Theorem 2.6]{McLeish1975} obtained a functional central limit theorem and Proposition \ref{prop_McLeish} is an immediate consequence of that.
\end{proof}
\noindent
Throughout the remainder of the section we assume $\eta=(\eta_t)_{t\in\mathbb{R}}$ to be jointly measurable, centered, almost periodic in distributional sense, $m$-dependent and bounded by $|\eta_t|\leq K$ almost surely for all $t\in\mathbb{R}$, for some $K>0$. Let $Y=(Y_k)_{k\in\mathbb{Z}}$ be a random sampling as in Theorem \ref{CLT_sampled}, which is independent of $\eta$ and which satisfies $|\widehat{\mathcal{L}}(Y_1)(z)|\ne 1$ for all $z\ne 0$. By $Z^\eta$ we denote the stationary limit process from Theorem \ref{Theorem_stat_Z}. Furthermore we define
\begin{align*}
    \mathcal{Y}:=\sigma(Y_k: k\in\mathbb{N}_0)
\end{align*}
as well as for $u\geq 0$
\begin{align*}
    c_u(t):=\mathbb{E}(\eta_t \eta_{t+u}), \quad t\in\mathbb{R}.
\end{align*}
We collect the following preliminaries about the function $c_u$ and the autocovariance function $\gamma_\eta$ of $Z^\eta$.
\begin{lemma}\label{prop_c_u}
    \begin{enumerate}
        \item The function $(t,u)\mapsto c_u(t)$ is measurable,
        \item for $u\geq 0$, $c_u\in AP(\mathbb{R},\mathbb{R})$ and $||c_u||_\infty\leq K^2$,
        \item $c_u(t)=0$ $\forall t\in\mathbb{R}$ if $u>m$,
        \item for $h\in\mathbb{N}_0$, $\gamma_\eta(h):={\rm{Cov}}(Z_0^\eta, Z_h^\eta)=\mathbb{E}(Z^\eta_0Z^\eta_h)=\int_{[0,\infty)}M(c_u)P_{Y_h}(du)$,
        \item for $h\in\mathbb{N}_0$, $|\gamma_\eta(h)|\leq K^2 P(Y_h\leq m)$,
        \item $\sigma_\eta^2:=\gamma_\eta(0)+2\sum_{h=1}^\infty\gamma_\eta(h)\in [0,\infty)$ with absolute convergence of the series.
    \end{enumerate}
\end{lemma}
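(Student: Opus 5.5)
I would verify the six items in order; each reduces to a short calculation built on earlier results. For (i), joint measurability of $(t,\omega)\mapsto\eta_t(\omega)$ makes $(t,u,\omega)\mapsto \eta_t(\omega)\eta_{t+u}(\omega)$ jointly measurable, because $(t,u)\mapsto t+u$ is continuous. Since $|\eta_t\eta_{t+u}|\le K^2$ is integrable, Fubini's theorem gives measurability of $(t,u)\mapsto \mathbb{E}(\eta_t\eta_{t+u})$.

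For (ii), the bound $\|c_u\|_\infty\le K^2$ is immediate. For almost periodicity, I use that $t\mapsto\mathcal{L}(\eta_t,\eta_{t+u})$ is Bohr almost periodic in $(\mathcal{P}(\mathbb{R}^2),d_{LP})$. All these laws are supported in $[-K,K]^2$, where $(x,y)\mapsto xy$ is bounded and continuous (truncate outside by $\max(-K^2,\min(K^2,xy))$). The map $\mu\mapsto\int \phi\,d\mu$ is then uniformly continuous on the set of measures supported in $[-K,K]^2$ with respect to $d_{LP}$, because $\phi$ is Lipschitz there. Composing a Bohr almost periodic function with a uniformly continuous map yields a Bohr almost periodic function, for example via the Bochner $\varepsilon$-translation characterisation in Remark \ref{remark_equiv_bohr_ap}(i). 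Alternatively I can cite \cite[Remark 2.7]{BM-22} together with the argument used for $f_X$ in Theorem \ref{LLN_sampled}, applied to the bounded, hence uniformly integrable, process $(\eta_t\eta_{t+u})_t$.

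Item (iii) follows from $m$-dependence with $s=t$. For $u>m$, $\eta_t$ is $\sigma(\eta_r:r\le t)$-measurable and $\eta_{t+u}$ is measurable with respect to $\sigma(\eta_r:r>t+m)$, so $c_u(t)=\mathbb{E}\eta_t\,\mathbb{E}\eta_{t+u}=0$ because $\eta$ is centered.

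Item (iv) is the main step. By Theorem \ref{Theorem_stat_Z} with $g_1=0$ and $g_2=h$, we have $(\eta_{Y_g},\eta_{Y_{g+h}})\overset{d}{\to}(Z_0^\eta,Z_h^\eta)$. Everything is bounded by $K$, so moments converge. This gives $\mathbb{E}Z_0^\eta=\lim\mathbb{E}\eta_{Y_g}=0$, since $\mathbb{E}\eta_{Y_g}=\mathbb{E}f_\eta(Y_g)$ with $f_\eta\equiv0$. It also gives $\mathbb{E}(Z_0^\eta Z_h^\eta)=\lim_{g\to\infty}\mathbb{E}(\eta_{Y_g}\eta_{Y_{g+h}})$. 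Conditioning on $Y$ and using independence of increments,
\begin{align*}
\mathbb{E}(\eta_{Y_g}\eta_{Y_{g+h}})=\int_{[0,\infty)}\mathbb{E}\, c_u(Y_g)\,P_{Y_h}(du).
\end{align*}
Lemma \ref{lemma1} gives $\mathbb{E}c_u(Y_g)\to M(c_u)$ for each fixed $u$. Dominated convergence with bound $K^2$ then yields $\int M(c_u)P_{Y_h}(du)$. The main subtlety here is measurability of $u\mapsto \mathbb{E}c_u(Y_g)$ and of $u\mapsto M(c_u)$; both follow from (i), the latter as a pointwise limit of measurable functions.

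For (v), (iii) gives $M(c_u)=0$ for $u>m$, and $|M(c_u)|\le K^2$ otherwise. Hence $|\gamma_\eta(h)|\le K^2P(Y_h\le m)$.

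For (vi), (\ref{exp_decay_coeff}) shows that $P(Y_h\le m)$ decays exponentially, so the series converges absolutely. Finally, $\sigma_\eta^2\ge0$: for the stationary process $Z^\eta$ with summable autocovariances, $\frac1n\operatorname{Var}(\sum_{k=1}^n Z_k^\eta)=\sum_{|h|<n}(1-|h|/n)\gamma_\eta(h)\ge0$. By dominated convergence this tends to $\sigma_\eta^2$, so $\sigma_\eta^2\ge0$.
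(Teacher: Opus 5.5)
Your proposal is correct and follows essentially the same route as the paper: (i) via joint measurability and Fubini, (iii) from $m$-dependence and centering, and (iv) by matching the moment limit from Theorem \ref{Theorem_stat_Z} with the conditioning formula $\int_{[0,\infty)}\mathbb{E}\,c_u(Y_g)\,P_{Y_h}(du)$, Lemma \ref{lemma1} and dominated convergence; (v) and (vi) then come from $P(Y_h\le m)$, its exponential decay and the variance identity for the stationary process $Z^\eta$. The only deviation is your self-contained argument for almost periodicity of $c_u$ in (ii), via uniform $d_{LP}$-continuity of $\mu\mapsto\int\phi\,d\mu$ on laws supported in $[-K,K]^2$, where the paper simply cites \cite[Proposition 2.8(d)]{BM-22}; it works once you replace $\phi$ by a globally bounded Lipschitz extension of $(x,y)\mapsto xy$ restricted to $[-K,K]^2$.
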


\begin{proof}
    The process $\eta$ is $L_2$-uniformly integrable, as it is a.s. bounded. Proposition 2.8(d) in \cite{BM-22} therefore gives $c_u\in AP(\mathbb{R},\mathbb{R})$ for every fixed $u\geq 0$. Furthermore joint measurability of $(t,u)\mapsto c_u(t)$ follows from joint measurability of $\eta$ and Fubini's theorem, exactly as in the proof of Theorem 7.2 in \cite{BM-22}. Also since for every $N\in\mathbb{N}$ the function $u\mapsto \frac{1}{2N}\int_{(-N,N)}c_u(t)\,dt$ is measurable and converges pointwise to $M(c_u)$, we get that $u\mapsto M(c_u)$ is measurable. Since $\eta$ is bounded, $||c_u||_\infty\leq K^2$ is immediate. Furthermore $m$-dependence and centering of $\eta$ immediately give $c_u(t)=0$ $\forall t\in\mathbb{R}$, whenever $u>m$, so (i)-(iii) are proven.\\
    Independence of $\eta$ and $Y$ together with the stationary and independent increments property of $Y$ gives, for $r,h\in\mathbb{N}_0$
    \begin{align*}
        \mathbb{E}(\eta_{Y_r}\eta_{Y_{r+h}})=\mathbb{E}(c_{Y_{r+h}-Y_r}(Y_r))=\int_{[0,\infty)}\mathbb{E}(c_u(Y_r))\,P_{Y_h}(du).
    \end{align*}
    With Lemma \ref{lemma1} we get for each $u\geq 0$ that $\mathbb{E}(c_u(Y_r))\underset{r\to\infty}{\longrightarrow}M(c_u)$. Lebesgue's dominated convergence theorem, which is applicable because of (ii), therefore gives
    \begin{align}\label{eq_prop_c_u_1}
        \mathbb{E}(\eta_{Y_r}\eta_{Y_{r+h}})\underset{r\to\infty}{\longrightarrow}\int_{[0,\infty)}M(c_u)P_{Y_h}(du).
    \end{align}
    Since $Y$ is independent of $\eta$, which is centered, a conditioning argument shows that $\mathbb{E}(\eta_{Y_r})=0$ for all $r\in\mathbb{N}_0$. By Theorem \ref{Theorem_stat_Z}, we get $(\eta_{Y_r},\eta_{Y_{r+h}})\underset{r\to\infty}{\overset{d}{\longrightarrow}}(Z_0^\eta, Z_h^\eta)$ for all $h\in\mathbb{N}_0$. Since $\eta$ is uniformly bounded Remark \ref{remark_after_stat_thrm}(iii) ensures convergence of the first moments and the product moments to the corresponding moments of $Z^\eta$. Comparing with (\ref{eq_prop_c_u_1}) yields (iv).\\
    Using (iv) we can estimate
    \begin{align*}
        |\gamma_\eta(h)|\leq\int_{[0,\infty)}|M(c_u)|\,P_{Y_h}(du)=\int_{[0,m]}|M(c_u)|P_{Y_h}(du)\leq K^2 P(Y_h\leq m),
    \end{align*}
    which proves (v). Absolute convergence of the series in (vi) now follows from (v) and because $P(Y_h\leq m)$ decays exponentially. Stationarity of $Z^\eta$ yields
    \begin{align*}
        0\leq \frac{1}{n}\text{Var}\left(\sum_{j=1}^n Z_j^\eta\right)=\gamma_\eta(0)+2\sum_{i=1}^{n-1}\left(1-\frac{h}{n}\right)\gamma_\eta(h).
    \end{align*}
    Since we have already established absolute summability we are able to conclude $\sigma_\eta^2\geq 0$, by letting $n\to\infty$.
\end{proof}

\begin{lemma}\label{lemma_moving_block_var_cond}
    Let $\sigma_\eta^2$ be defined as in Lemma \ref{prop_c_u}, then
    \begin{align*}
        \sup\limits_{k\geq 0}\left|\left|\frac{1}{n}\mathbb{E}\left(\left(\sum_{j=k+1}^{k+n}\eta_{Y_j}\right)^2\Bigg |\mathcal Y\right)-\sigma_\eta^2\right|\right|_{L_1}\underset{n\to\infty}{\longrightarrow}0.
    \end{align*}
\end{lemma}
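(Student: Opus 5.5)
We first compute the conditional second moment explicitly. Since $\eta$ is independent of $\mathcal Y$ and $Y_j$ is $\mathcal Y$-measurable for $j\ge 0$, we have
\begin{align*}
    \frac1n\mathbb{E}\Big(\Big(\sum_{j=k+1}^{k+n}\eta_{Y_j}\Big)^2\Big|\mathcal Y\Big)
    =\frac1n\sum_{j=k+1}^{k+n}c_0(Y_j)+\frac2n\sum_{h=1}^{n-1}\sum_{j=k+1}^{k+n-h}c_{Y_{j+h}-Y_j}(Y_j).
\end{align*}
Because $Y$ has strictly increasing paths and $c_u\equiv 0$ for $u>m$ by Lemma \ref{prop_c_u}(iii), only pairs with $Y_{j+h}-Y_j\le m$ contribute. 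Since $\sum_h K^2P(Y_h\le m)<\infty$ by \eqref{exp_decay_coeff}, we fix $H$ and split off the terms with $h>H$. Their $L_1$-norm is bounded by $\frac2n\sum_{h>H}\sum_j K^2P(Y_h\le m)\le 2K^2\sum_{h>H}P(Y_h\le m)$, uniformly in $k,n$. By Lemma \ref{prop_c_u}(v), the tail $2\sum_{h>H}|\gamma_\eta(h)|$ of $\sigma_\eta^2$ is also small. This reduces the claim to showing, for each fixed $h\in\{0,\dots,H\}$,
\begin{align*}
    \sup_{k\ge0}\Big\|\frac1n\sum_{j=k+1}^{k+n}c_{Y_{j+h}-Y_j}(Y_j)-\gamma_\eta(h)\Big\|_{L_1}\underset{n\to\infty}{\longrightarrow}0,
\end{align*}
where the boundary loss of $h$ summands costs at most $HK^2/n$.

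Next, fix $h$ and write $\Delta_j:=Y_{j+h}-Y_j$ and $W_j:=c_{\Delta_j}(Y_j)$. By Lemma \ref{prop_c_u}(iv), $\gamma_\eta(h)=\mathbb{E}M(c_{\Delta_j})$. We bound $L_1$ by $L_2$ and compute the variance-type quantity $\mathbb{E}\big(\frac1n\sum_j (W_j-\gamma_\eta(h))\big)^2$. Split it into $\frac1n\sum_j\big(W_j-M(c_{\Delta_j})\big)$ and $\frac1n\sum_j\big(M(c_{\Delta_j})-\gamma_\eta(h)\big)$. The second part is an average of bounded, $h$-dependent, identically distributed centered variables, since $\Delta_j$ depends only on $\psi_{j+1},\dots,\psi_{j+h}$. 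Its second moment is therefore $O(h K^4/n)$, uniformly in $k$.

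For the first part, the term $W_j-M(c_{\Delta_j})$ depends on $Y_j$ and on the future increments. We expand the square and look at a pair $j<l$ with $l-j>h$. We condition on $\mathcal G:=\sigma(\psi_i:i\le j+h)$ and use that $Y_l=Y_{j+h}+\widetilde Y_{l-j-h}$ with $\widetilde Y$ independent of $\mathcal G$. By independence of $\Delta_l$ from everything else, the conditional expectation of the $l$-th term equals
\begin{align*}
    \int\Big(\mathbb{E}\,c_u\big(x+\widetilde Y_{l-j-h}\big)-M(c_u)\Big)P_{Y_h}(du)\Big|_{x=Y_{j+h}}.
\end{align*}
By Corollary \ref{lemma_uniformity}, for each $u$ the expression $\sup_x|\mathbb{E}c_u(x+\widetilde Y_r)-M(c_u)|$ tends to $0$ as $r\to\infty$, and dominated convergence in $u$ gives a bound $\epsilon(r)\to0$ independent of $x$. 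The covariance of such a pair is therefore bounded by $2K^2\epsilon(l-j-h)$, and pairs with $l-j\le h$ are bounded by $4K^4$. Summing gives
\begin{align*}
    \frac{C(h+1)}{n}+\frac{C}{n}\sum_{r=1}^{n}\epsilon(r),
\end{align*}
which tends to $0$ by Cesàro, uniformly in $k$ since the bound never involves $k$. Note that the \emph{diagonal/near-diagonal} terms need no mean information at all.

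The main obstacle is the uniformity in $u$ needed to pass Corollary \ref{lemma_uniformity} under the integral against $P_{Y_h}(du)$. This step requires the measurability of $(t,u)\mapsto c_u(t)$ from Lemma \ref{prop_c_u}(i). Pointwise convergence in $u$ together with the bound $2K^2$ suffices, via dominated convergence, once we note that $\sup_x$ of a measurable family is handled by taking the supremum over rational $x$ and using continuity of $c_u$. If this causes difficulties, the fallback is to use the compactness of $\{c_u:u\in[0,m]\}$; this would need continuity in $u$, which we expect to obtain similarly to Lemma \ref{rel_compact_char}.
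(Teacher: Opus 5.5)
Your proof is correct and follows the paper's overall plan: the same expansion of the conditional second moment, the same uniform tail bound $K^2\sum_{h>H}P(Y_h\le m)$ for large lags, and for each fixed lag an $L_2$ estimate driven by $\epsilon(r)=\int\sup_x|\mathbb{E}c_u(x+Y_r)-M(c_u)|\,P_{Y_h}(du)$ (the paper's $\varepsilon_h(r)$) together with Ces\`aro. The one genuine difference is the intermediate centering at fixed $h$.

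The paper centers $W_j$ at $g_h(Y_j)=\int c_u(Y_j)\,P_{Y_h}(du)$, i.e.\ it averages out the increment $\Delta_j$ first. Then $W_j-g_h(Y_j)$ has zero conditional mean given $\mathcal G_j=\sigma(\psi_1,\dots,\psi_j)$ and is orthogonal beyond lag $h$. The remainder $g_h(Y_j)-\gamma_\eta(h)$ depends on the position only and is controlled by conditioning on $\mathcal G_i$ with gap $j-i$.

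You center at $M(c_{\Delta_j})$ instead, averaging out the position first. Then $M(c_{\Delta_j})-\gamma_\eta(h)$ is orthogonal beyond lag $h$ simply because $\Delta_j$ and $\Delta_l$ are sums over disjoint blocks of $\psi$'s. The decay is carried by $W_j-M(c_{\Delta_j})$, at the cost of conditioning at time $j+h$ and working with the gap $l-j-h$.

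The two routes are mirror images built from the same ingredients. Yours avoids the auxiliary function $g_h$ and the martingale-difference step. Your explicit choice $\sigma(\psi_i:i\le j+h)$ also keeps the measurability bookkeeping transparent. The paper states that the $i$-th term of its first piece is $\mathcal G_i$-measurable; it should read $\mathcal G_{i+h}$, which is harmless since $j\ge i+h$.

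Your fallback is not needed, and it would not be available as stated. Lemma \ref{rel_compact_char} gives continuity in the Fourier variable $z$ for fixed increments, not continuity of $u\mapsto c_u$. The rational-supremum measurability argument you give first is exactly the paper's, and it suffices.
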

\begin{proof}
    Using the independence of $\eta$ and $Y$, we see
    \begin{align*}
        \mathbb{E}(\eta_{Y_i}\eta_{Y_{i+h}}|\mathcal{Y})=c_{Y_{i+h}-Y_i}(Y_i)
    \end{align*}
    for $i,\in\mathbb{N}, h\in\mathbb{N}_0$. Let $n\in\mathbb{N}$ and $k\in\mathbb{N}_0$. We expand
    \begin{align*}
        \left(\sum_{j=k+1}^{k+n}\eta_{Y_j}\right)^2=\sum_{i=k+1}^{k+n}\eta_{Y_i}^2+2\sum_{i={k+1}}^{k+n-1}\sum_{j=i+1}^{k+n}\eta_{Y_i}\eta_{Y_j}=\sum_{i=k+1}^{k+n}\eta_{Y_i}^2+2\sum_{h={1}}^{n-1}\sum_{i=k+1}^{k+n-h}\eta_{Y_i}\eta_{Y_{i+h}}.
    \end{align*}
    Conditionally on $\mathcal{Y}$ we therefore obtain
    \begin{align}\label{eq_lemma_cond_mov_block_6}
        \frac{1}{n}\mathbb{E}\left(\left(\sum_{j=k+1}^{k+n}\eta_{Y_j}\right)^2\Bigg|\mathcal{Y}\right)=\frac{1}{n}\sum_{i=k+1}^{k+n}c_0(Y_i)+\frac{2}{n}\sum_{h={1}}^{n-1}\sum_{i=k+1}^{k+n-h}c_{Y_{i+h}-Y_i}(Y_i).
    \end{align}
    Fix $h\in\mathbb{N}_0$ and let $n>h$. Our next goal is to show
    \begin{align}\label{eq_lemma_cond_mov_block_1}
        \sup\limits_{k\geq 0}\left|\left|\frac{1}{n}\sum_{i=k+1}^{k+n-h}c_{Y_{i+h}-Y_i}(Y_i)-\gamma_\eta(h)\right|\right|_{L_2}\underset{n\to\infty}{\longrightarrow}0.
    \end{align}
    by showing
    \begin{align}\label{eq_lemma_cond_mov_block_2}
        \sup\limits_{k\geq 0}\left|\left|\frac{1}{n}\sum_{i=k+1}^{k+n-h}\big(c_{Y_{i+h}-Y_i}(Y_i)-g_h(Y_i)\big)\right|\right|_{L_2}\underset{n\to\infty}{\longrightarrow}0
    \end{align}
    and 
    \begin{align}\label{eq_lemma_cond_mov_block_4}
        \sup\limits_{k\geq 0}\left|\left|\frac{1}{n}\sum_{i=k+1}^{k+n-h}\big(g_h(Y_i)-\gamma_\eta(h)\big)\right|\right|_{L_2}\underset{n\to\infty}{\longrightarrow}0.
    \end{align}
    with $g_h$ being defined as
    \begin{align*}
        g_h(t):=\int_{[0,\infty)}c_u(t)\, P_{Y_h}(du)=\mathbb{E}(c_{Y_h}(t))=\mathbb{E}(c_{Y_{i+h}-Y_i}(t)) \qquad \forall i\in\mathbb{N}
    \end{align*}
    for $t\in\mathbb{R}$. We start with (\ref{eq_lemma_cond_mov_block_2}). Lemma \ref{prop_c_u} gives joint measurability and boundedness of $c_u(t)$, which implies that $g_h$ is measurable and $|g_h(t)|\leq K^2$. Define further 
    \begin{align*}
        \mathcal{G}_i:=\sigma(\psi_1,...,\psi_i)=\sigma(Y_1,...,Y_i).
    \end{align*}
    Since $Y_{i+h}-Y_i$ is independent of $\mathcal{G}_i$ we obtain $\mathbb{E}(c_{Y_{i+h}-Y_i}(Y_i)|\mathcal{G}_i)=g_h(Y_i)$, hence 
    \begin{align*}
        \mathbb{E}(c_{Y_{i+h}-Y_i}(Y_i)-g_h(Y_i)|\mathcal{G}_i)=0 \qquad \forall i\in\mathbb{N}, h\in\mathbb{N}_0.
    \end{align*} 
    Now for $h\in\mathbb{N}_0$ and $i\leq j$ with $j-i\geq h$, $c_{Y_{i+h}-Y_i}(Y_i)-g_h(Y_i)$ is $\mathcal{G}_i$-measurable, hence using the tower property we get
    \begin{align*}
        &\mathbb{E}\big[\big((c_{Y_{i+h}-Y_i}(Y_i)-g_h(Y_i)\big)\big((c_{Y_{j+h}-Y_j}(Y_j)-g_h(Y_j)\big)\big]\\
        &=\mathbb{E}\big[\big((c_{Y_{i+h}-Y_i}(Y_i)-g_h(Y_i)\big)\mathbb{E}\big((c_{Y_{j+h}-Y_j}(Y_j)-g_h(Y_j)\big|\mathcal{G}_j\big)\big]=0.
    \end{align*}
    So $\mathbb{E}\big[\big((c_{Y_{i+h}-Y_i}(Y_i)-g_h(Y_i)\big)\big((c_{Y_{j+h}-Y_j}(Y_j)-g_h(Y_j)\big)\big]=0$ whenever $|i-j|\geq h$.
    We expand
    \begin{align*}
        &\mathbb{E}\left(\frac{1}{n}\sum_{i=k+1}^{k+n-h}(c_{Y_{i+h}-Y_i}(Y_i)-g_h(Y_i))\right)^2\\
        &=\frac{1}{n^2}\sum_{i=k+1}^{k+n-h}\sum_{j=k+1}^{k+n-h}\mathbb{E}\big[\big(c_{Y_{i+h}-Y_i}(Y_i)-g_h(Y_i)\big)\big(c_{Y_{j+h}-Y_j}(Y_j)-g_h(Y_j)\big)\big].
    \end{align*}
    Only summands for which $|i-j|< h$ are potentially not equal to $0$. For each $i$ there are at most $2h$ choices of such $j$ ($2h-1$ choices if $h\ne 0$) and since there are $n-h$ choices of $i$ the sum contains at most $(n-h)2h$ summands which are potentially not equal to $0$. Since also $|(c_{Y_{i+h}-Y_i}(Y_i)-g_h(Y_i)|\leq 2K^2$, each summand is bounded by $4K^4$. Thus
    \begin{align*}
        &\mathbb{E}\left(\frac{1}{n}\sum_{i=k+1}^{k+n-h}((c_{Y_{i+h}-Y_i}(Y_i)-g_h(Y_i))\right)^2\leq \frac{4K^4(n-h)2h}{n^2}.
    \end{align*}
    For the case $h=0$, we have $Y_{i+h}-Y_i=0$, i.e. $g_0(t)=c_0(t)$ and thus the left-hand side of the previous inequality is $0$. Hence for each fixed $h\in\mathbb{N}_0$ (\ref{eq_lemma_cond_mov_block_2}) follows. \\
    Next we want to prove (\ref{eq_lemma_cond_mov_block_4}). For $r\in\mathbb{N}$ and $h\in\mathbb{N}_0$ we define
    \begin{align*}
        \varepsilon_h(r):=\int_{[0,\infty)}\sup\limits_{x\in\mathbb{R}}|\mathbb{E}(c_u(x+Y_r))-M(c_u))|\, P_{Y_h}(du).
    \end{align*}
    This is well-defined because $u\mapsto M(c_u)$ is measurable as was shown in the proof of Lemma \ref{prop_c_u}. Furthermore for each $u,r$ fixed the function $x\mapsto \mathbb{E}(c_u(x+Y_r))$ is continuous, which follows from continuity and boundedness of $c_u\in AP(\mathbb{R},\mathbb{R})$ and Lebesgue's dominated convergence theorem. So the supremum in the integral of the definition of $\varepsilon_h$ could also be taken over $\mathbb{Q}$ instead of $\mathbb{R}$. Since for each $q\in\mathbb{Q}$ the function $u\mapsto\mathbb{E}(c_u(q+Y_r))$ is measurable because of joint measurability of $c_u$ and since countable suprema of measurable functions of $u$ are measurable, the integrand in $\varepsilon_h$ is measurable and $\varepsilon_h$ therefore well-defined.\\
    Now since $c_u$ is almost periodic for each fixed $u$, Corollary \ref{lemma_uniformity} shows that
    \begin{align*}
        \sup\limits_{x\in\mathbb{R}}|\mathbb{E}(c_u(x+Y_r)-M(c_u))|\underset{r\to\infty}{\longrightarrow}0.
    \end{align*}
    Since $\sup\limits_{x\in\mathbb{R}}|\mathbb{E}(c_u(x+Y_r)-M(c_u))|\leq 2K^2$, Lebesgue's dominated convergence theorem again gives
    \begin{align*}
        \varepsilon_h(r)\underset{r\to\infty}{\longrightarrow}0
    \end{align*}
    for each fixed $h\in\mathbb{N}_0$. Using Lemma \ref{prop_c_u} (iv) and Fubini's theorem we obtain for $x\in\mathbb{R}$
    \begin{align*}
        \mathbb{E}(g_h(x+Y_r))-\gamma_\eta(h)=\int_{[0,\infty)}\mathbb{E}(c_u(x+Y_r))-M(c_u)\,P_{Y_h}(du),
    \end{align*}
    which implies
    \begin{align}\label{eq_lemma_cond_mov_block_3}
        \sup\limits_{x\in\mathbb{R}}|\mathbb{E}(g_h(x+Y_r))-\gamma_\eta(h)|\leq \varepsilon_h(r).
    \end{align}
    Let $j>i$ then again $Y_j-Y_i$ is independent of $\mathcal{G}_i$ and has law $P_{Y_{j-i}}$, hence 
    \begin{align*}
        &\mathbb{E}(g_h(Y_j)-\gamma_\eta(h)|\mathcal{G}_i)=\mathbb{E}(g_h(Y_i+(Y_j-Y_i))|\mathcal{G}_i)-\gamma_\eta(h)\\
        &=\int\limits_{[0,\infty)}g_h(Y_i+u)\,P_{Y_{j-i}}(du)-\gamma_\eta(h).
    \end{align*}
    We can now estimate 
    \begin{align*}
        &|\mathbb{E}(g_h(Y_j)-\gamma_\eta(h)|\mathcal{G}_i)|=\left|\int_{[0,\infty)}g_h(Y_i+u)\,P_{Y_{j-i}}(du)-\gamma_\eta(h)\right|\\
        &\leq \sup\limits_{x\in\mathbb{R}}\left|\int_{[0,\infty)}g_h(x+u)\,P_{Y_{j-i}}(du)-\gamma_\eta(h)\right|\\
        &=\sup\limits_{x\in\mathbb{R}}|\mathbb{E}(g_h(x+Y_{j-i}))-\gamma_\eta(h)|\underset{(\ref{eq_lemma_cond_mov_block_3})}{\leq} \varepsilon_h({j-i}).
    \end{align*}
    Since $|g_h(Y_i)-\gamma_\eta(h)|\leq 2K^2$ we get with the tower property
    \begin{align*}
        &|\mathbb{E}\big[\big(g_h(Y_i)-\gamma_\eta(h)\big)\big(g_h(Y_j)-\gamma_\eta(h)\big)\big]|=|\mathbb{E}\big[\big(g_h(Y_i)-\gamma_\eta(h)\big)\mathbb{E}\big(g_h(Y_j)-\gamma_\eta(h)|\mathcal{G}_i\big)\big]|\\
        &\leq 2K^2 \varepsilon_h(j-i).
    \end{align*}
    We now expand $\left(\frac{1}{n}\sum_{i=k+1}^{k+n-h}\left(g_h(Y_i)-\gamma_\eta(h)\right)\right)^2$. The sum inside the square has $n-h$ terms. For $1\leq r\leq n-h-1$ there are exactly $n-h-r$ pairs of indices $i<j$ with $j-i=r$. The diagonal terms can be estimated by $4K^4$, the non-diagonal terms by $2K^2\varepsilon_h(r)$. Thus
    \begin{align*}
        &\mathbb{E}\left(\frac{1}{n}\sum_{i=k+1}^{k+n-h}\left(g_h(Y_i)-\gamma_\eta(h)\right)\right)^2\leq \frac{4K^4(n-h)}{n^2}+\frac{4K^2}{n^2}\sum_{r=1}^{n-h-1}(n-h-r)\varepsilon_h(r)\\
        &\leq \frac{4K^4}{n}+\frac{4K^2}{n}\sum_{r=1}^{n-1}\varepsilon_h(r),
    \end{align*}
    where the second term converges to $0$ as $n\to\infty$ by the Lemma of Cesàro. Since the bound in the previous inequality does not depend on $k$, we get (\ref{eq_lemma_cond_mov_block_4}). This now establishes (\ref{eq_lemma_cond_mov_block_1}), since
    \begin{align*}
        &\sup\limits_{k\geq 0}\left|\left|\frac{1}{n}\sum_{i=k+1}^{k+n-h}c_{Y_{i+h}-Y_i}(Y_i)-\gamma_\eta(h)\right|\right|_{L_2}\\
        &\leq
        \sup\limits_{k\geq 0}\left|\left|\frac{1}{n}\sum_{i=k+1}^{k+n-h}\big(c_{Y_{i+h}-Y_i}(Y_i)-g_h(Y_i)\big)\right|\right|_{L_2}\\
        &+\sup\limits_{k\geq 0}\left|\left|\frac{1}{n}\sum_{i=k+1}^{k+n-h}\big(g_h(Y_i)-\gamma_\eta(h)\big)\right|\right|_{L_2}+\frac{h}{n}|\gamma_\eta(h)|.        
    \end{align*}
    The first term converges to $0$ as $n\to\infty$ by (\ref{eq_lemma_cond_mov_block_2}), the second by (\ref{eq_lemma_cond_mov_block_4}) and the last because $h\in\mathbb{N}_0$ is fixed.\\
    So far we have only treated fixed lags $h\in\mathbb{N}_0$. We now want to establish uniform control of large lags. We know that $c_u(t)=0$, if $u>m$ and that (similar to the proof of Lemma \ref{prop_c_u} (v)) $\mathbb{E}|c_{Y_{i+h}-Y_i}(Y_i)|\leq K^2 P({Y_h\leq m})$. We now fix $H\in\mathbb{N}_ 0$. Then
    \begin{align*}
        &\mathbb{E}\left(\sum_{h=H+1}^{n-1}\left|\frac{1}{n}\sum_{i=k+1}^{k+n-h}c_{Y_{i+h}-Y_i}(Y_i)\right|\right)\leq \frac{1}{n}\sum_{h=H+1}^{n-1}\sum_{i=k+1}^{k+n-h}\mathbb{E}|c_{Y_{i+h}-Y_i}(Y_i)|\\
        &\leq K^2 \sum_{h=H+1}^{n-1}\frac{n-h}{n}P(Y_h\leq m)\leq K^2\sum_{h=H+1}^\infty P(Y_h\leq m).
    \end{align*}
    Since the last term is independent of $k$ and $n$ and since the sum in the last term is absolutely converging, we get
    \begin{align}\label{eq_lemma_cond_mov_block_5}
        \sup\limits_{k\geq 0}\sup\limits_{n\geq 1}\mathbb{E}\left(\sum_{h=H+1}^{n-1}\left|\frac{1}{n}\sum_{i=k+1}^{k+n-h}c_{Y_{i+h}-Y_i}(Y_i)\right|\right)\leq K^2 \sum_{h=H+1}^\infty P(Y_h\leq m) \underset{H\to\infty}{\longrightarrow}0.
    \end{align}
    For fixed $H\in\mathbb{N}_ 0$ and $n>H$ we write
    \begin{align*}
        \sigma_\eta^2=\gamma_\eta(0)+2\sum_{h=1}^H\gamma_\eta(h)+2\sum_{h=H+1}^\infty\gamma_\eta(h).
    \end{align*}
    Using (\ref{eq_lemma_cond_mov_block_6}) this gives the following estimate
    \begin{align*}
        &\sup\limits_{k\geq 0}\left|\left|\frac{1}{n}\mathbb{E}\left(\left(\sum_{j=k+1}^{k+n}\eta_{Y_j}\right)^2\Bigg |\mathcal Y\right)-\sigma_\eta^2\right|\right|_{L_1}\\
        &\leq \sup\limits_{k\geq 0}\left|\left|\frac{1}{n}\sum_{i=k+1}^{k+n}c_{0}(Y_i)-\gamma_\eta(0)\right|\right|_{L_1}\\
        &+ 2\sum_{h=1}^H \sup\limits_{k\geq 0}\left|\left|\frac{1}{n}\sum_{i=k+1}^{k+n-h}c_{Y_{i+h}-Y_i}(Y_i)-\gamma_\eta(h)\right|\right|_{L_1}\\
        &+ 4K^2 \sum_{h=H+1}^\infty P(Y_h\leq m),
    \end{align*}
    where for the last term we used the estimate (\ref{eq_lemma_cond_mov_block_5}) as well as $\sum_{h=H+1}^\infty|\gamma_\eta(h)|\leq K^2\sum_{h=H+1}^\infty P(Y_h\leq m)$, which is due to Lemma \ref{prop_c_u} (v). Since $||\cdot||_{L_1}\leq ||\cdot||_{L_2}$ we get with (\ref{eq_lemma_cond_mov_block_1}) for fixed $H\in\mathbb{N}_0$
    \begin{align*}
        \limsup\limits_{n\to\infty}\sup\limits_{k\geq 0}\left|\left|\frac{1}{n}\mathbb{E}\left(\left(\sum_{j=k+1}^{k+n}\eta_{Y_j}\right)^2\Bigg |\mathcal Y\right)-\sigma_\eta^2\right|\right|_{L_1} \leq 4K^2 \sum_{h=H+1}^\infty P(Y_h\leq m).
    \end{align*}
    Letting now $H\to\infty$ completes the proof.
\end{proof}
\noindent
Define 
\begin{align*}
    \mathcal{F}_j:=\begin{cases}
        \mathcal{Y}, & j\leq 0,\\
        \sigma(\mathcal{Y} \cup \sigma(\eta_{Y_1},...,\eta_{Y_j})), & j\geq 1.
    \end{cases}
\end{align*}
\begin{lemma}\label{lemma_mixingale_trunc}
    $((\eta_{Y_i})_{i\in\mathbb{N}},(\mathcal{F}_j)_{j\in\mathbb{Z}})$ is a $L_2$-uniformly integrable centered $L_2$-mixingale. More precisely, for $r\in\mathbb{N}_0$,
    \begin{align*}
        \sup\limits_{i\in\mathbb{N}}||\mathbb{E}(\eta_{Y_i}|\mathcal{F}_{i-r})||_{L_2}\leq K P(Y_r\leq m)^{1/2},
    \end{align*}
    and $\eta_{Y_i}=\mathbb{E}(\eta_{Y_i}|\mathcal{F}_{i+r}).$ We can furthermore choose exponentially decaying mixingale coefficients, which have size $-1/2$.
\end{lemma}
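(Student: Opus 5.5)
The plan is to handle the forward condition \eqref{mixingale_equation_2} first, since it is trivial. By definition $\eta_{Y_i}$ is $\mathcal F_i$-measurable, and $\mathcal F_i\subset\mathcal F_{i+r}$ for $r\in\mathbb N_0$. Hence $\eta_{Y_i}=\mathbb E(\eta_{Y_i}\mid\mathcal F_{i+r})$, and the left-hand side of \eqref{mixingale_equation_2} vanishes.

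\emph{Centering and uniform integrability.} Because $|\eta_t|\le K$, the family $(\eta_{Y_i})_{i\in\mathbb N}$ is bounded by $K$ and is therefore $L_2$-uniformly integrable. Centering follows by conditioning on $\mathcal Y$: independence of $\eta$ and $Y$ together with joint measurability gives $\mathbb E(\eta_{Y_i}\mid\mathcal Y)=f_\eta(Y_i)=0$.

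\emph{Backward bound.} The main step is bounding $\|\mathbb E(\eta_{Y_i}\mid\mathcal F_{i-r})\|_{L_2}$. For $r\ge i$ we have $\mathcal F_{i-r}=\mathcal Y$, so the conditional expectation is $0$ by the computation above. For $1\le r<i$, set $j=i-r$. Conditionally on $\mathcal Y$, the sampling times are frozen, so $Y_1<\dots<Y_i$ are fixed points.
\begin{itemize}
\item On the event $\{Y_i-Y_j>m\}$, $m$-dependence (applied at $s=Y_j$) makes $\eta_{Y_i}$ conditionally independent of $(\eta_{Y_1},\dots,\eta_{Y_j})$ given $\mathcal Y$. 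The conditional expectation then equals $\mathbb E(\eta_{Y_i}\mid\mathcal Y)=0$.
\item On the complement it is bounded by $K$.
\end{itemize}
This gives $|\mathbb E(\eta_{Y_i}\mid\mathcal F_j)|\le K\One_{\{Y_i-Y_j\le m\}}$ a.s. Squaring and taking expectations yields
\[
\|\mathbb E(\eta_{Y_i}\mid\mathcal F_j)\|_{L_2}\le K\,P(Y_i-Y_j\le m)^{1/2}=K\,P(Y_r\le m)^{1/2}.
\]
The case $r=0$ is trivial.

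\emph{Making the conditioning rigorous.} I expect this to be the main obstacle. One has to justify the regular conditional computation: with $Y$ independent of $\eta$, take $y=(y_k)$ fixed and define
\[
\phi(y,x_1,\dots,x_j):=\mathbb E\bigl(\eta_{y_i}\mid \eta_{y_1}=x_1,\dots,\eta_{y_j}=x_j\bigr),
\]
using product-space/Fubini arguments on $\Omega_\eta\times\Omega_Y$. I would verify the identity
\[
\mathbb E\bigl(\eta_{Y_i}\,G(Y)\,h(\eta_{Y_1},\dots,\eta_{Y_j})\bigr)=\int \mathbb E\bigl(\eta_{y_i}\,h(\eta_{y_1},\dots,\eta_{y_j})\bigr)\,G(y)\,P_Y(dy)
\]
for bounded measurable $G$ and $h$. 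Since such products generate $\mathcal F_j$, this suffices. When $y_i-y_j>m$, the inner expectation factorizes as $\mathbb E(\eta_{y_i})\,\mathbb E(h(\cdot))=0$ by $m$-dependence. This shows
\[
\mathbb E(\eta_{Y_i}\mid\mathcal F_j)=\One_{\{Y_i-Y_j\le m\}}\,\mathbb E(\eta_{Y_i}\mid\mathcal F_j),
\]
and the bound $K$ on the remaining factor completes the estimate.

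\emph{Mixingale coefficients.} Set $\theta_r:=K\,P(Y_{r}\le m)^{1/2}$ for $r\ge1$ and $\theta_0:=K$. By \eqref{exp_decay_coeff}, $\theta_r\le K e^{(ms-\Phi(s)r)/2}$, which decays exponentially, and $P(Y_r\le m)$ is nonincreasing in $r$ because the increments are positive. To get a strictly positive sequence with $\theta_r\downarrow 0$ covering both conditions, replace it by $\theta_r:=K e^{ms/2}e^{-\Phi(s)(r-1)/2}$. This is still exponentially decaying, and condition \eqref{mixingale_equation_2} holds trivially.

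\emph{Size $-1/2$.} Take $L_r=r$ (or $L_r=r^{1/2+\delta}$, whichever makes $\sum 1/(rL_r)<\infty$; $L_r=r$ gives $\sum r^{-2}<\infty$). Then $L_r-L_{r-1}=1=O(L_r/r)$, $L_r$ is nondecreasing, and $\theta_r=o(r^{-3/2})$ holds because of the exponential decay.
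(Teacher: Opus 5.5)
Your proposal is correct and follows essentially the paper's proof. The forward condition is trivial by $\mathcal F_i$-measurability, and the backward bound comes from showing $\One_{\{Y_i-Y_{i-r}>m\}}\mathbb E(\eta_{Y_i}\mid\mathcal F_{i-r})=0$ via a product-space/Fubini argument with $m$-dependence, which yields $K^2P(Y_r\le m)$ and exponentially decaying coefficients of size $-1/2$ with $L_r=r$. The only cosmetic differences are that you pass from product test functions $G(Y)h(\eta_{Y_1},\dots,\eta_{Y_j})$ to all of $\mathcal F_j$ by an implicit $\pi$--$\lambda$ argument, where the paper uses the Doob--Dynkin representation $\One_A=\One_B\circ(\overline Y,\eta_{Y_1},\dots,\eta_{Y_{i-r}})$ directly, and that your shift to $r-1$ in $\theta_r$ is harmless but unnecessary.
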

\begin{proof}
    That $\mathbb{E}(\eta_{Y_i})=0$ for all $i\in\mathbb{N}$ was already observed in the proof of Lemma \ref{prop_c_u}. Also boundedness of $\eta$ immediately implies $L_2$-uniform integrability of $\eta_Y$.\\
    Since for $i\in\mathbb{N}$, $\eta_{Y_i}$ is $\mathcal{F}_i$-measurable and $(\mathcal{F}_j)_{j\in\mathbb{Z}}$ is increasing, we immediately get $\eta_{Y_i}=\mathbb{E}(\eta_{Y_i}|\mathcal{F}_{i+r})$ for $r\in\mathbb{N}_0$ and thus $((\eta_{Y_i})_{i\in\mathbb{N}},(\mathcal{F}_j)_{j\in\mathbb{Z}})$ fulfills (\ref{mixingale_equation_2}) for any positive sequence $\theta_r\downarrow 0$.\\
    For the proof of (\ref{mixingale_equation_1}), if $i-r\leq 0$, then $\mathcal{F}_{i-r}=\mathcal{Y}$ and $\mathbb{E}(\eta_{Y_i}|\mathcal Y)=0$ since $\eta$ is centered and independent of $Y$, we therefore only need to consider $i-r\geq 1$. Define $D:=\{Y_i-Y_{i-r}>m\} \in \mathcal Y\subset\mathcal{F}_{i-r}$. Our next goal is to prove that 
    \begin{align}\label{eq_mixingale_proof_1}
        \mathbb{E}(\One_A \One_D\eta_{Y_i})=0 \qquad \forall A\in \mathcal{F}_{i-r}.
    \end{align}
    Define $\overline{Y}:=(Y_k)_{k\in\mathbb{N}_0}$. Since $\eta$ and $Y$ are independent are independent, we can realise $(\eta,\overline{Y})$ on a product space $(\Omega_\eta, \mathcal{F}_\eta, P_\eta)\otimes(\Omega_{\overline{Y}}, \mathcal{F}_{\overline{Y}}, P_{\overline{Y}})$ so that $\eta_{Y_i}(\omega)=\eta_{Y_i(\omega_Y)}(\omega_\eta)$. Since $A\in\mathcal{F}_{i-r}$, there exists a $B\in \left(\bigotimes\limits_{n\in\mathbb{N}_0} \mathcal{B}([0,\infty))\right)\otimes \mathcal{B}(\mathbb{R}^{i-r})$ such that $\One_A=\One_B \circ (\overline{Y}, \eta_{Y_1},...,\eta_{Y_{i-r}})$. Then we obtain
    \begin{align}\label{eq_mixingale_proof_2}
        \mathbb{E}(\One_A \One_D\eta_{Y_i})=\int\limits_{\Omega_{\overline{Y}}}\One_{\{Y_i(\omega_Y)-Y_{i-r}(\omega_Y)>m\}}G(\omega_Y)\,P_{\overline{Y}}(d\omega_Y),
    \end{align}
    where
    \begin{align*}
        &G(\omega_Y):=\int\limits_{\Omega_\eta}\eta_{Y_i(\omega_Y)}(\omega_\eta)\One_B(\overline{Y}(\omega_Y),\eta_{Y_1(\omega_Y)}(\omega_\eta),...,\eta_{Y_{i-r}(\omega_Y)}(\omega_\eta))\,P_\eta(d\omega_\eta)\\
        &=\mathbb{E}^\eta[\eta_{Y_i(\omega_Y)}\One_B(\overline{Y}(\omega_Y),\eta_{Y_1(\omega_Y)},...,\eta_{Y_{i-r}(\omega_Y)})]
    \end{align*}
    with $\mathbb{E}^\eta$ denoting the expected value with respect to $\eta$. If for fixed $\omega_Y$, $Y_i(\omega_Y)-Y_{i-r}(\omega_Y)>m$, the $m$-dependence property of $\eta$ yields
    \begin{align*}
        G(\omega_Y)=\mathbb{E}^\eta(\eta_{Y_i(\omega_Y)})\mathbb{E}^\eta(\One_B(\overline{Y}(\omega_Y),\eta_{Y_1(\omega_Y)},...,\eta_{Y_{i-r}(\omega_Y)}))=0,
    \end{align*}
    since $\mathbb{E}^\eta(\eta_{Y_i(\omega_Y)})=0$ as $\eta$ is centered. Hence the integrand in (\ref{eq_mixingale_proof_2}) is $P_\eta \otimes P_{\overline{Y}}$-almost surely equal to $0$, thus establishing (\ref{eq_mixingale_proof_1}). Now (\ref{eq_mixingale_proof_1}) implies $\One_D\mathbb{E}(\eta_{Y_i}|\mathcal{F}_{i-r})=\mathbb{E}(\One_D\eta_{Y_i}|\mathcal{F}_{i-r})=0$ for all $i\in\mathbb{N}$ and $r\in\mathbb{N}_0$. 
    We write - using the conditional Jensen inequality -
    \begin{align*}
        &||\mathbb{E}(\eta_{Y_i}|\mathcal{F}_{i-r})||_{L_2}^2=\mathbb{E}[\One_{D^c}|\mathbb{E}(\eta_{Y_i}|\mathcal{F}_{i-r})|^2]\leq \mathbb{E}[\One_{D^c}\mathbb{E}(\eta_{Y_i}^2|\mathcal{F}_{i-r})]\\
        &\leq K^2 P(D^c)= K^2P(Y_r\leq m).
    \end{align*}
    Since the right-hand side does not depend on $i$, we get
    \begin{align*}
        \sup\limits_{i\in\mathbb{N}}||\mathbb{E}(\eta_{Y_i}|\mathcal{F}_{i-r})||_{L_2}\leq K P(Y_r\leq m)^{1/2}.
    \end{align*}
    We now choose coefficients $(\theta_r)_{r\in\mathbb{N}_0}$ such that $((\eta_{Y_i})_{i\in\mathbb{N}},(\mathcal{F}_j)_{j\in\mathbb{Z}})$ is an $L_2$-mixingale, with coefficients of size $-1/2$. We use the estimate (\ref{exp_decay_coeff}) to get for $r\in\mathbb{N}_0$ and any fixed $s>0$
    \begin{align*}
        KP(Y_r\leq m)^{1/2}\leq K e^{ms/2}e^{-\Phi(s)r/2}=:\theta_r.
    \end{align*}
    It is now easy to verify that, using $L(r)=r$, the coefficients $(\theta_r)_{r\in\mathbb{N}_0}$ satisfy McLeish's size $-1/2$ condition and that $((\eta_{Y_i})_{i\in\mathbb{N}},(\mathcal{F}_j)_{j\in\mathbb{Z}})$ satisfies (\ref{mixingale_equation_1}). This completes the proof.
\end{proof}

\begin{lemma}\label{lemma_moving_block_cond_full}
    Let $\sigma_\eta^2$ be defined as in Lemma \ref{prop_c_u}, then for $r,n\in\mathbb{N}$ 
    \begin{align*}
        \sup\limits_{k\geq 0}\left|\left|\frac{1}{n}\mathbb{E}\left(\left(\sum_{j=k+1}^{k+n}\eta_{Y_j}\right)^2\Bigg |\mathcal{F}_{k-r}\right)-\sigma_\eta^2\right|\right|_{L_1}
    \end{align*}
    \begin{align}\label{eq_lemma_cond_mov_block_full_1}
        \leq \sup\limits_{k\geq 0}\left|\left|\frac{1}{n}\mathbb{E}\left(\left(\sum_{j=k+1}^{k+n}\eta_{Y_j}\right)^2\Bigg |\mathcal Y\right)-\sigma_\eta^2\right|\right|_{L_1}+2\sigma_\eta^2P(Y_{r+1}\leq m).
    \end{align}
    In particular
    \begin{align*}
        \sup\limits_{k\geq 0}\left|\left|\frac{1}{n}\mathbb{E}\left(\left(\sum_{j=k+1}^{k+n}\eta_{Y_j}\right)^2\Bigg |\mathcal{F}_{k-r}\right)-\sigma_\eta^2\right|\right|_{L_1}\longrightarrow 0,
    \end{align*}
    as $\min\{r,n\}\to\infty$.
\end{lemma}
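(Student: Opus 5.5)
The plan is to compare conditioning on $\mathcal F_{k-r}$ with conditioning on $\mathcal Y$ on a ``good'' event where $m$-dependence decouples the block from the past, exactly as in the proof of Lemma \ref{lemma_mixingale_trunc}. Write $Q_{k,n}:=\frac1n\big(\sum_{j=k+1}^{k+n}\eta_{Y_j}\big)^2\geq 0$. If $k-r\leq 0$, then $\mathcal F_{k-r}=\mathcal Y$ and \eqref{eq_lemma_cond_mov_block_full_1} is trivial. So assume $k-r\geq 1$ and set $D:=\{Y_{k+1}-Y_{k-r}>m\}\in\mathcal Y\subset\mathcal F_{k-r}$. Since $Y_{k+1}-Y_{k-r}\overset{d}{=}Y_{r+1}$, we have $P(D^c)=P(Y_{r+1}\leq m)$.

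\emph{Step 1 (the key step).} I will show that
\begin{align*}
\One_D\,\mathbb E(Q_{k,n}\mid\mathcal F_{k-r})=\One_D\,\mathbb E(Q_{k,n}\mid\mathcal Y)\quad\text{a.s.}
\end{align*}
Since $\One_D\mathbb E(Q_{k,n}\mid\mathcal Y)$ is $\mathcal F_{k-r}$-measurable, it suffices to verify $\mathbb E(\One_A\One_D Q_{k,n})=\mathbb E(\One_A\One_D\,\mathbb E(Q_{k,n}\mid\mathcal Y))$ for all $A\in\mathcal F_{k-r}$. As in Lemma \ref{lemma_mixingale_trunc}, realise $(\eta,\overline Y)$ on a product space and write $\One_A=\One_B\circ(\overline Y,\eta_{Y_1},\dots,\eta_{Y_{k-r}})$. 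Fix $\omega_Y\in D$. Then the times $Y_1(\omega_Y),\dots,Y_{k-r}(\omega_Y)$ are all $\leq Y_{k-r}(\omega_Y)$, while $Y_{k+1}(\omega_Y),\dots,Y_{k+n}(\omega_Y)$ all exceed $Y_{k-r}(\omega_Y)+m$. By $m$-dependence the inner $\eta$-integral therefore factorises:
\begin{align*}
\mathbb E^\eta\big[Q_{k,n}(\omega_Y,\cdot)\One_B(\cdot)\big]=\mathbb E^\eta\big[Q_{k,n}(\omega_Y,\cdot)\big]\,\mathbb E^\eta\big[\One_B(\cdot)\big].
\end{align*}
Here $\omega_Y\mapsto\mathbb E^\eta[Q_{k,n}(\omega_Y,\cdot)]$ is a version of $\mathbb E(Q_{k,n}\mid\mathcal Y)$. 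Integrating over $\omega_Y\in D$ and applying Fubini gives the identity. The main care needed is the measurability and version bookkeeping, which is handled exactly as in Lemma \ref{lemma_mixingale_trunc}.

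\emph{Step 2 (the bad event).} Split
\begin{align*}
\|\mathbb E(Q_{k,n}\mid\mathcal F_{k-r})-\sigma_\eta^2\|_{L_1}=\|\One_D(\mathbb E(Q_{k,n}\mid\mathcal Y)-\sigma_\eta^2)\|_{L_1}+\|\One_{D^c}(\mathbb E(Q_{k,n}\mid\mathcal F_{k-r})-\sigma_\eta^2)\|_{L_1}.
\end{align*}
For the second term, use $Q_{k,n}\geq0$ and $D^c\in\mathcal Y\subset\mathcal F_{k-r}$ to get
\begin{align*}
\mathbb E\big[\One_{D^c}\mathbb E(Q_{k,n}\mid\mathcal F_{k-r})\big]=\mathbb E\big[\One_{D^c}\mathbb E(Q_{k,n}\mid\mathcal Y)\big]\leq \mathbb E\big[\One_{D^c}|\mathbb E(Q_{k,n}\mid\mathcal Y)-\sigma_\eta^2|\big]+\sigma_\eta^2P(D^c).
\end{align*}
Hence the second term is at most $\mathbb E[\One_{D^c}|\mathbb E(Q_{k,n}\mid\mathcal Y)-\sigma_\eta^2|]+2\sigma_\eta^2P(D^c)$. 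Adding the first term recombines $\One_D+\One_{D^c}$ and yields $\|\mathbb E(Q_{k,n}\mid\mathcal Y)-\sigma_\eta^2\|_{L_1}+2\sigma_\eta^2P(Y_{r+1}\leq m)$. Taking $\sup_{k\geq0}$ gives \eqref{eq_lemma_cond_mov_block_full_1}.

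\emph{Step 3 (the limit).} The first term on the right of \eqref{eq_lemma_cond_mov_block_full_1} tends to $0$ as $n\to\infty$ by Lemma \ref{lemma_moving_block_var_cond}. The second term tends to $0$ as $r\to\infty$ by the exponential bound \eqref{exp_decay_coeff}. Both bounds are uniform in $k$, so the supremum tends to $0$ as $\min\{r,n\}\to\infty$. I expect Step 1 to be the main obstacle; Steps 2 and 3 are elementary once it is in place.
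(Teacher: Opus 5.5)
Your proposal is correct and follows essentially the same route as the paper's proof: you use the same good event $\{Y_{k+1}-Y_{k-r}>m\}$, with $P(D^c)=P(Y_{r+1}\leq m)$. On that event you use the same $m$-dependence factorisation, via the product-space argument of Lemma \ref{lemma_mixingale_trunc}, to identify $\mathbb E(\,\cdot\mid\mathcal F_{k-r})$ with $\mathbb E(\,\cdot\mid\mathcal Y)$, and on the complement you use the same estimate $x\leq|x-y|+y$ to produce the term $2\sigma_\eta^2P(Y_{r+1}\leq m)$. You also spell out the Fubini step the paper only indicates, and you correctly put $\mathcal Y$ (not $\mathcal F_{k-r}$, as the paper's final combined display mistakenly has) on the right-hand side.
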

\begin{proof}
    From Lemma \ref{lemma_moving_block_var_cond} we already know that 
    \begin{align*}
        \sup\limits_{k\geq 0}\left|\left|\frac{1}{n}\mathbb{E}\left(\left(\sum_{j=k+1}^{k+n}\eta_{Y_j}\right)^2\Bigg |\mathcal Y\right)-\sigma_\eta^2\right|\right|_{L_1}\underset{n\to\infty}{\longrightarrow}0.
    \end{align*}
    Fix now $k\in\mathbb{N}_0$ and $r,n\in\mathbb{N}$. If $k-r\leq 0$ then $\mathcal{F}_{k-r}=\mathcal{Y}$, so (\ref{eq_lemma_cond_mov_block_full_1}) follows immediately. Therefore we only have to look at the case $k-r\geq 1$.\\
    Define the event $D_{k,r}:=\{Y_{k+1}-Y_{k-r}>m\}\in \mathcal{Y}$. On this event it holds $Y_j\geq Y_{k+1}>Y_{k-r}+m$ for $j\geq k+1$. Similar to the proof of $\One_D \mathbb{E}(\eta_{Y_i}|\mathcal{F}_{i-r})=0$ in Lemma \ref{lemma_mixingale_trunc} we can show that
    \begin{align}\label{eq_lemma_cond_mov_block_full_2}
        \One_{D_{k,r}}\mathbb{E}\left(\frac{1}{n}\left(\sum_{j=k+1}^{k+n}\eta_{Y_j}\right)^2\Bigg| \mathcal{F}_{k-r}\right)=\One_{D_{k,r}}\frac{1}{n}\mathbb{E}\left(\left(\sum_{j=k+1}^{k+n}\eta_{Y_j}\right)^2\Bigg| \mathcal{Y}\right).
    \end{align}
    Hence on $D_{k,r}$ we are again in the case of Lemma \ref{lemma_moving_block_var_cond}. \\
    It is clear that $D_{k,r}^c=\{Y_{k+1}-Y_{k-r}\leq m\}\in \mathcal{Y}\subset\mathcal{F}_{k-r}$. We therefore get by the definition of the conditional expectation
    \begin{align*}
        \mathbb{E}\left[\One_{D_{k,r}^c}\mathbb{E}\left(\frac{1}{n}\left(\sum_{j=k+1}^{k+n}\eta_{Y_j}\right)^2\Bigg| \mathcal{F}_{k-r}\right)\right]=\mathbb{E}\left[\One_{D_{k,r}^c}\frac{1}{n}\left(\sum_{j=k+1}^{k+n}\eta_{Y_j}\right)^2\right]
    \end{align*}
    \begin{align*}
        =\mathbb{E}\left[\One_{D_{k,r}^c}\mathbb{E}\left(\frac{1}{n}\left(\sum_{j=k+1}^{k+n}\eta_{Y_j}\right)^2\Bigg| \mathcal{Y}\right)\right].
    \end{align*}
    We estimate
    \begin{align*}
        &\mathbb{E}\left(\One_{D_{k,r}^c}\left|\mathbb{E}\left(\frac{1}{n}\left(\sum_{j=k+1}^{k+n}\eta_{Y_j}\right)^2\Bigg|\mathcal{F}_{k-r}\right)-\sigma_\eta^2\right|\right)\\
        &\leq \mathbb{E}\left(\One_{D_{k,r}^c}\mathbb{E}\left(\frac{1}{n}\left(\sum_{j=k+1}^{k+n}\eta_{Y_j}\right)^2\Bigg|\mathcal{F}_{k-r}\right)\right)+\sigma_\eta^2P(D_{k,r}^c).
    \end{align*}
    The previous equality and the estimate $x\leq |x-y|+y$ for $x,y\geq 0$ gives
    \begin{align*}
        \mathbb{E}\left(\One_{D_{k,r}^c}\left|\mathbb{E}\left(\frac{1}{n}\left(\sum_{j=k+1}^{k+n}\eta_{Y_j}\right)^2\Bigg|\mathcal{F}_{k-r}\right)-\sigma_\eta^2\right|\right)
    \end{align*}
    \begin{align}\label{eq_lemma_cond_mov_block_full_3}
        \leq \mathbb{E}\left(\One_{D_{k,r}^c}\left|\mathbb{E}\left(\frac{1}{n}\left(\sum_{j=k+1}^{k+n}\eta_{Y_j}\right)^2\Bigg|\mathcal{Y}\right)-\sigma_\eta^2\right|\right)+2\sigma_\eta^2P(D_{k,r}^c).
    \end{align}
    Putting (\ref{eq_lemma_cond_mov_block_full_2}) and (\ref{eq_lemma_cond_mov_block_full_3}) together and using that $P(D_{k,r}^c)=P(Y_{r+1}\leq m)$, we get 
    \begin{align*}
        \left|\left|\frac{1}{n}\mathbb{E}\left(\left(\sum_{j=k+1}^{k+n}\eta_{Y_j}\right)^2\Bigg |\mathcal{F}_{k-r}\right)-\sigma_\eta^2\right|\right|_{L_1}
    \end{align*}
    \begin{align*}
        \leq\left|\left|\frac{1}{n}\mathbb{E}\left(\left(\sum_{j=k+1}^{k+n}\eta_{Y_j}\right)^2\Bigg |\mathcal{F}_{k-r}\right)-\sigma_\eta^2\right|\right|_{L_1}+2\sigma_\eta^2P(Y_{r+1}\leq m).
    \end{align*}
    Taking the supremum over $k\in\mathbb{N}_0$ gives (\ref{eq_lemma_cond_mov_block_full_1}).\\
    By Lemma \ref{lemma_moving_block_var_cond} the first term of the inequality (\ref{eq_lemma_cond_mov_block_full_1}) converges to $0$, uniformly in $k$, as $n\to\infty$. The second term tends to $0$, uniformly in $k$, as $r\to\infty$, since $P(Y_{r+1}\leq m)\underset{r\to\infty}{\longrightarrow}0$, which finishes the proof.
\end{proof}

\begin{lemma}\label{lemma_trunc_CLT}
    Let $\sigma_\eta^2$ be defined as in Lemma \ref{prop_c_u}. If $\sigma_\eta^2>0$, then
    \begin{align*}
        \frac{1}{\sqrt{n}}\sum_{i=1}^n \eta_{Y_i}\overset{d}{\underset{n\to\infty}{\longrightarrow}}N(0, \sigma^2_\eta).
    \end{align*}
    If $\sigma_\eta^2=0$, then $\frac{1}{\sqrt{n}}\sum_{i=1}^n \eta_{Y_i}\underset{n\to\infty}{\overset{L_2}{\longrightarrow}}0.$
\end{lemma}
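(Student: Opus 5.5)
For $\sigma_\eta^2>0$ the plan is to apply Proposition \ref{prop_McLeish} to $X_i:=\eta_{Y_i}$ with the filtration $(\mathcal{F}_j)_{j\in\mathbb{Z}}$ defined before Lemma \ref{lemma_mixingale_trunc}, writing $S_n=\sum_{i=1}^n\eta_{Y_i}$. Lemma \ref{lemma_mixingale_trunc} already says that $(\eta_{Y_i})$ is a centered, $L_2$-uniformly integrable $L_2$-mixingale whose coefficients have size $-1/2$; this gives condition (i) and the structural hypotheses. Condition (iii) is exactly the last assertion of Lemma \ref{lemma_moving_block_cond_full}. That lemma controls $\frac1n\mathbb{E}((S_{k+n}-S_k)^2\mid\mathcal{F}_{k-r})$ uniformly in $k\ge0$ as $\min\{r,n\}\to\infty$, which is stronger than the required convergence as $\min\{r,k,n\}\to\infty$.

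Condition (ii) remains. Take $k=0$ in Lemma \ref{lemma_moving_block_var_cond} and use the tower property:
\begin{align*}
\left|\frac1n\mathbb{E}S_n^2-\sigma_\eta^2\right|=\left|\mathbb{E}\Big(\frac1n\mathbb{E}(S_n^2\mid\mathcal{Y})-\sigma_\eta^2\Big)\right|\le\left\|\frac1n\mathbb{E}(S_n^2\mid\mathcal{Y})-\sigma_\eta^2\right\|_{L_1}\underset{n\to\infty}{\longrightarrow}0 .
\end{align*}
Hence $\frac1n\mathbb{E}S_n^2\to\sigma_\eta^2>0$, and Proposition \ref{prop_McLeish} gives $S_n/\sqrt n\overset{d}{\longrightarrow}N(0,\sigma_\eta^2)$.

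For $\sigma_\eta^2=0$ we have $\|S_n/\sqrt n\|_{L_2}^2=\frac1n\mathbb{E}S_n^2$. The display above shows this tends to $\sigma_\eta^2=0$, which is the claimed $L_2$-convergence. Proposition \ref{prop_McLeish} is not needed in this case.

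The substantive work is already contained in Lemmas \ref{lemma_moving_block_var_cond}, \ref{lemma_mixingale_trunc} and \ref{lemma_moving_block_cond_full}, so the main care point is matching McLeish's hypotheses precisely. First, check that the index set $\mathbb{Z}$ of the filtration with $\mathcal{F}_j=\mathcal{Y}$ for $j\le0$ is admissible in Definition \ref{def_mixingale}. Second, check that the supremum over $k$ in Lemma \ref{lemma_moving_block_cond_full} indeed covers the regime $k\to\infty$ demanded in (iii). Both hold by construction.
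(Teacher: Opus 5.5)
Your proposal is correct and follows the paper's proof essentially line by line. Lemma \ref{lemma_mixingale_trunc} supplies the mixingale structure and uniform integrability, Lemma \ref{lemma_moving_block_cond_full} gives condition (iii), and both condition (ii) and the degenerate case $\sigma_\eta^2=0$ come from Lemma \ref{lemma_moving_block_var_cond} with $k=0$ together with the tower property; your remark that uniformity in $k$ makes the $\min\{r,n\}\to\infty$ convergence stronger than what (iii) requires is a slightly more explicit justification than the paper gives.
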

\begin{proof}
    Our goal is to apply Proposition \ref{prop_McLeish} on $((\eta_{Y_i})_{i\in\mathbb{N}}, (\mathcal{F}_j)_{j\in\mathbb{Z}})$. Lemma \ref{lemma_mixingale_trunc} already established that $((\eta_{Y_i})_{i\in\mathbb{N}}, (\mathcal{F}_j)_{j\in\mathbb{Z}})$ is a centered $L_2$-mixingale, whose coefficients $(\theta_r)_{r\in\mathbb{N}_0}$ can be chosen to have size $-1/2$. Also $L_2$-uniform integrability of $\eta_Y$ has already been established. Condition (iii) of Proposition \ref{prop_McLeish} follows from Lemma \ref{lemma_moving_block_cond_full}. So all that is left to prove (when $\sigma_\eta^2>0$) is condition (ii) in Proposition \ref{prop_McLeish}.\\
   We define $S_n:=\sum_{i=1}^n \eta_{Y_i}$.
    Lemma \ref{lemma_moving_block_var_cond} implies for $k=0$
    \begin{align*}
        \left|\left|\frac{1}{n}\mathbb{E}(S_n^2|\mathcal{Y})-\sigma_\eta^2\right|\right|_{L_1}\underset{n\to\infty}{\longrightarrow}0.
    \end{align*}
    Now the tower property yields
    \begin{align*}
        \left|\frac{1}{n}\mathbb{E}(S_n^2)-\sigma_\eta^2\right|=\left|\mathbb{E}\left(\frac{1}{n}\mathbb{E}(S_n^2|\mathcal{Y})-\sigma_\eta^2\right)\right|\leq \mathbb{E}\left(\left|\frac{1}{n}\mathbb{E}( S_n^2|\mathcal{Y})-\sigma_\eta^2\right|\right)\underset{n\to\infty}{\longrightarrow}0.
    \end{align*}
     If $\sigma_\eta^2>0$, then all assumptions of Proposition \ref{prop_McLeish} are satisfied, we therefore get that
    \begin{align*}
        \frac{1}{\sqrt{n}}\sum_{i=1}^n \eta_{Y_i}\overset{d}{\underset{n\to\infty}{\longrightarrow}}N(0, \sigma^2_\eta).
    \end{align*}
    If $\sigma_\eta^2=0$ then the previous calculation gives
    \begin{align*}
        \left|\left|\frac{S_n}{\sqrt{n}}\right|\right|^2_{L_2}=\frac{1}{n}\mathbb{E}(S_n^2)\underset{n\to\infty}{\longrightarrow} 0
    \end{align*}
    and thus $\frac{S_n}{\sqrt{n}}\underset{n\to\infty}{\overset{L_2}{\longrightarrow}}0$.
\end{proof}
\noindent
We now have the main ingredients to prove Theorem \ref{CLT_sampled}
\begin{proof}[Proof of Theorem \ref{CLT_sampled}]
    We first prove (\ref{abs_conv_acf}). In order to do that, we first collect some properties of the process $\xi=(\xi_t)_{t\in\mathbb{R}}$. In Theorem \ref{LLN_sampled} we established that the function $f_X(t)=\mathbb{E}(X_t)$ is almost periodic. Centering by a deterministic almost-periodic function $f_X$ preserves almost periodicity in distributional sense, so $\xi_t=X_t-f_X(t)$ is almost periodic in distributional sense. Note that $m$-dependence of $\xi$ follows immediately from $m$-dependence of $X$. Further, since both $X$ and $(f_X(t))_{t\in\mathbb{R}}$ are $L_2$-uniformly integrable, so is their difference $\xi$.
    Hence also $\xi_Y$ is $L_2$-uniformly integrable, since by independence of $\xi$ and $Y$ we can estimate
    \begin{align*}
        \mathbb{E}(\xi_{Y_i}^2\One_{\{|\xi_{Y_i}|>K\}})=\int_{[0,\infty)}\mathbb{E}(\xi_t^2\One_{\{|\xi_t|>K\}})\, P_{Y_i}(dt)\leq \sup\limits_{t\in\mathbb{R}}\mathbb{E}(\xi_t^2\One_{\{|\xi_t|>K\}}). 
    \end{align*}
    We now come back to the proof of (\ref{abs_conv_acf}). The Cauchy-Schwarz inequality as well as $m$-dependence of $\xi$ yield for $t\in\mathbb{R}$ and $s\geq 0$
    \begin{align*}
        |\mathbb{E}(\xi_t\xi_{t+s})| \leq \sup\limits_{t\in\mathbb{R}}\mathbb{E}(\xi_t^2)\One_{\{s\leq m\}}.
    \end{align*}
    We write using the independent increment property of $Y$ for $i,h\in\mathbb{N}_0$
    \begin{align*}
        \mathbb{E}(\xi_{Y_i}\xi_{Y_{i+h}})=\int_{[0,\infty)}\int_{[0,\infty)}\mathbb{E}(\xi_t\xi_{t+s})P_{Y_h}(ds)P_{Y_i}(dt),
    \end{align*}
    which shows, paired with the previous estimate,
    \begin{align}\label{ineq_conv_acf_1}
        |\mathbb{E}(\xi_{Y_i}\xi_{Y_{i+h}})|\leq \sup\limits_{t\in\mathbb{R}}\mathbb{E}(\xi_t^2) P(Y_h\leq m).
    \end{align}
    Note that since $P(Y_h\leq m)$ decays exponentially as $h\to\infty$, $|\mathbb{E}(\xi_{Y_i}\xi_{Y_{i+h}})|$ is absolutely summable. Let $Z$ denote the stationary limit process given by Theorem \ref{Theorem_stat_Z} applied on the process $\xi$. Since $\xi_Y$ is $L_2$-uniformly integrable, we get, using Remark \ref{remark_after_stat_thrm} (iii) and Lemma \ref{prop_c_u}(iv) that 
    \begin{align*}
        \mathbb{E}(\xi_{Y_i}\xi_{Y_{i+h}})\underset{i\to\infty}{\longrightarrow}\mathbb{E}(Z_0Z_h)=\gamma_Z(h).
    \end{align*}
    and consequently
    \begin{align*}
        |\gamma_Z(h)|\leq \sup\limits_{t\in\mathbb{R}}\mathbb{E}(\xi_t^2) P(Y_h\leq m).
    \end{align*}
    This gives 
    \begin{align*}
        \sum_{h=1}^\infty |\gamma_Z(h)|\leq \sup\limits_{t\in\mathbb{R}}\mathbb{E}(\xi_t^2)  \sum_{h=1}^\infty P(Y_h\leq m)<\infty,
    \end{align*}
    which proves the first part of (\ref{abs_conv_acf}). For the second part of (\ref{abs_conv_acf}) and the statement of the theorem when $\sigma^2=0$, we expand
    \begin{align*}
        \frac{1}{n}\text{Var}\left(\sum_{i=1}^n\xi_{Y_i}\right)=\frac{1}{n}\sum_{i=1}^n\mathbb{E}(\xi_{Y_i}^2)+2\sum_{h=1}^{n-1}\frac{1}{n}\sum_{i=1}^{n-h}\mathbb{E}(\xi_{Y_i}\xi_{Y_{i+h}}).
    \end{align*}
    The Cesàro Lemma gives that for each fixed $h\in\mathbb{N}_0$
    \begin{align*}
        \frac{1}{n}\sum_{i=1}^{n-h}\mathbb{E}(\xi_{Y_i}\xi_{Y_{i+h}})\underset{n\to\infty}{\longrightarrow}\gamma_{Z}(h).
    \end{align*}
    Since 
    \begin{align*}
        \left|\frac{1}{n}\sum_{i=1}^{n-h}\mathbb{E}(\xi_{Y_i}\xi_{Y_{i+h}})\right|\leq \sup\limits_{t\in\mathbb{R}}\mathbb{E}(\xi_t^2)P(Y_h\leq m)
    \end{align*}
    and the right-hand side is absolutely summable, we obtain by Lebesgue's dominated convergence theorem
    \begin{align*}
        \frac{1}{n}\text{Var}\left(\sum_{i=1}^n\xi_{Y_i}\right)\underset{n\to\infty}{\longrightarrow}\gamma_{Z}(0)+2\sum_{h=1}^\infty\gamma_{Z}(h)=:\sigma^2
    \end{align*}
    and in particular $\sigma^2\geq 0$, finishing the proof of (\ref{abs_conv_acf}).\\ 
    Similar to Lemma \ref{lemma_trunc_CLT} the case $\sigma^2=0$ can be immediately deduced from our work above. Indeed we have
    \begin{align*}
        \left|\left|\frac{1}{\sqrt{n}}\sum_{i=1}^n \xi_{Y_i}\right|\right|_{L_2}^2=\frac{1}{n}\text{Var}\left(\sum_{i=1}^n \xi_{Y_i}\right)\underset{n\to\infty}{\longrightarrow}\sigma^2=0,
    \end{align*}
    so $\frac{\sum_{i=1}^n\xi_{Y_i}}{\sqrt{n}}\underset{n\to\infty}{\overset{L_2}{\longrightarrow}}0$. For the remainder of the proof we therefore assume $\sigma^2>0$.\\
    The idea for the remaining part of the proof is to prove the asymptotic result (i) with a truncation argument. With Lemma \ref{lemma_trunc_CLT} we already have the main ingredient available.\\
    We define for $N>0$ the truncation function 
    \begin{align*}
        h_N(t):=\begin{cases}
            t & \text{ if } |t|\leq N,\\
            N & \text{ if } t>N,\\
            -N & \text{ if } t < -N,
        \end{cases}
    \end{align*}
    and denote $X_t^{(N)}:=h_N(X_t), f_X^{(N)}(t):=\mathbb{E}(X_t^{(N)})$ and $\xi_t^{(N)}:=h_N(X_t)-f_X^{(N)}(t)$. Since $h_N$ is bounded and continuous we get from \cite[Proposition 2.8]{BM-22} that the process $(h_N(X_t))_{t\in\mathbb{R}}$ is almost periodic in distributional sense and since $f_X^{(N)}$ is Bohr almost periodic, we obtain that the process $(\xi_t^{(N)})_{t\in\mathbb{R}}$ is almost periodic in distributional sense. It is also clear that the truncation does not affect the $m$-dependence property, so $(\xi_t^{(N)})_{t\in\mathbb{R}}$ is an $m$-dependent process. Since $\xi^{(N)}_t$ is clearly bounded by $2N$ we get that $(\xi_t^{(N)})_{t\in\mathbb{R}}$ is $L_p$-uniformly integrable for every $p>0$. Similar to before we can also infer $L_p$-uniform integrability of $\xi_Y^{(N)}$.\\ Let furthermore $Z^{(N)}$ be the stationary limit process given by Theorem \ref{Theorem_stat_Z} applied on the process $\xi^{(N)}$ and put 
    \begin{align*}
        \sigma_N^2:=\gamma_{Z^{(N)}}(0)+2\sum_{h=1}^\infty \gamma_{Z^{(N)}}(h),
    \end{align*}
    with $\gamma_{Z^{(N)}}$ denoting the autocovariance function of $Z^{(N)}$.
    Similar to the proof of (\ref{abs_conv_acf}) above one can show that this sum is absolutely convergent and that $\frac{1}{n}\text{Var}\left(\sum_{i=1}^n \xi_{Y_i}^{(N)}\right)\underset{n\to\infty}{\longrightarrow}\sigma_N^2$ for each fixed $N>0$.\\
    Taking $K=2N$ and $\eta=\xi^{(N)}$, we see that all assumptions of Lemma \ref{lemma_trunc_CLT} are satisfied and thus
    \begin{align*}
        \frac{1}{\sqrt{n}}\sum_{i=1}^n \xi^{(N)}_{Y_i}\overset{d}{\underset{n\to\infty}{\longrightarrow}}N(0, \sigma^2_N),
    \end{align*}
    if $\sigma_N^2>0$ and $\frac{1}{\sqrt{n}}\sum_{i=1}^n \xi^{(N)}_{Y_i}\overset{L_2}{\underset{n\to\infty}{\longrightarrow}}0$ if $\sigma_N^2=0$.\\
    Define $\chi_t^{(N)}:=\xi_t-\xi_t^{(N)}$. Our next goal is to show that 
    \begin{align*}
        \sup\limits_{n\in\mathbb{N}}\left|\left|\frac{1}{\sqrt{n}}\sum_{i=1}^n \chi_{Y_i}^{(N)}\right|\right|_{L_2}^2\underset{N\to\infty}{\longrightarrow}0.
    \end{align*}
    We have $\mathbb{E}(\chi_{Y_i}^{(N)})=0$ for all $i\in\mathbb{Z}$ and $N>0$. Furthermore we can estimate
    \begin{align*}
        &\mathbb{E}((\chi_{Y_i}^{(N)})^2|Y_i)=\text{Var}(X_{Y_i}-X_{Y_i}^{(N)}|Y_i)\leq \mathbb{E}((X_{Y_i}-X_{Y_i}^{(N)})^2|Y_i)
    \end{align*}
    and thus
    \begin{align*}
        \mathbb{E}((\chi_{Y_i}^{(N)})^2)\leq \mathbb{E}((X_{Y_i}-X_{Y_i}^{(N)})^2)\leq \sup\limits_{t\in\mathbb{R}}\mathbb{E}((X_t-X_t^{(N)})^2)\leq \sup\limits_{t\in\mathbb{R}}\mathbb{E}(X_t^2 \One_{\{|X_t|>N\}}),
    \end{align*}
    which converges to $0$ as $N\to\infty$, since $(X_t)_{t\in\mathbb{R}}$ is $L_2$-uniformly integrable.\\
    Similar to (\ref{ineq_conv_acf_1}) we get an estimate for $|\mathbb{E}(\chi_{Y_k}^{(N)}\chi_{Y_l}^{(N)})|$. Let $i>l$, then
    \begin{align*}
        |\mathbb{E}(\chi_{Y_i}^{(N)}\chi_{Y_l}^{(N)})|\leq \sup\limits_{t\in\mathbb R}\mathbb{E}((\chi_t^{(N)})^2)P(Y_{i-l}\leq m)
        \leq \sup\limits_{t\in\mathbb{R}}\mathbb{E}(X_t^2 \One_{\{|X_t|>N\}})P(Y_{i-l}\leq m).
    \end{align*}
    Now let $n\in\mathbb{N}$, then
    \begin{align*}
        &\mathbb{E}\left(\left(\sum_{i=1}^n \chi_{Y_i}^{(N)}\right)^2\right)=\sum_{i=1}^n\mathbb{E}((\chi_{Y_i}^{(N)})^2)+2\sum_{i=1}^{n}\sum_{l=1}^{i-1}\mathbb{E}(\chi_{Y_i}^{(N)}\chi_{Y_l}^{(N)})\\
        \leq & n \sup\limits_{t\in\mathbb{R}}\mathbb{E}(X_t^2 \One_{\{|X_t|>N\}})+2 \sup\limits_{t\in\mathbb{R}}\mathbb{E}(X_t^2 \One_{\{|X_t|>N\}})\sum_{h=1}^{n-1}(n-h)P(Y_{h}\leq m)\\
        &\leq n \sup\limits_{t\in\mathbb{R}}\mathbb{E}(X_t^2 \One_{\{|X_t|>N\}}) \left(1+2\sum_{h=1}^{\infty}P(Y_h\leq m)\right) \underset{N\to\infty}{\longrightarrow} 0.
    \end{align*}
    Thus 
    \begin{align*}
        \sup\limits_{n\in\mathbb{N}}\left|\left|\frac{1}{\sqrt{n}}\sum_{i=1}^n\chi_{Y_i}^{(N)}\right|\right|_{L_2}^2\leq \sup\limits_{t\in\mathbb{R}}\mathbb{E}(X_t^2 \One_{\{|X_t|>N\}}) \left(1+2\sum_{h=1}^{\infty}P(Y_h\leq m)\right) \underset{N\to\infty}{\longrightarrow}0.
    \end{align*}
    The above estimate of the $L_2$ average of the sum of the truncations also lets us conclude that $\sigma_N^2\underset{N\to\infty}{\longrightarrow}\sigma^2$. Indeed, let $N,n\in\mathbb{N}$, then we get by the reverse triangle inequality
    \begin{align*}
        &\left|\frac{1}{\sqrt{n}}\sqrt{\text{Var}\left(\sum_{i=1}^n\xi_{Y_i}^{(N)}\right)}-\frac{1}{\sqrt{n}}\sqrt{\text{Var}\left(\sum_{i=1}^n\xi_{Y_i}\right)} \right|=\frac{1}{\sqrt{n}}\left|\, \left|\left|\sum_{i=1}^n\xi_{Y_i}^{(N)}\right|\right|_{L_2}-\left|\left|\sum_{i=1}^n\xi_{Y_i}\right|\right|_{L_2}\right|\\
        &\leq \frac{1}{\sqrt{n}}\left|\left|\sum_{i=1}^n \chi_{Y_i}^{(N)}\right|\right|_{L_2}\leq \sqrt{\sup\limits_{t\in\mathbb{R}}\mathbb{E}(X_t^2 \One_{\{|X_t|>N\}}) \left(1+2\sum_{h=1}^{\infty}P(Y_h\leq m)\right)}.
    \end{align*}
    Fixing $N\in\mathbb{N}$ and letting $n\to\infty$ gives
    \begin{align*}
        \left|\sqrt{\sigma_N^2}-\sqrt{\sigma^2}\right|\leq \sqrt{\sup\limits_{t\in\mathbb{R}}\mathbb{E}(X_t^2 \One_{\{|X_t|>N\}}) \left(1+2\sum_{h=1}^{\infty}P(Y_h\leq m)\right)}.
    \end{align*}
    If we now let $N\to\infty$, we obtain $\sigma_N^2\underset{N\to\infty}{\longrightarrow}\sigma^2$. This also implies that in the case of $\sigma^2>0$ we know that $\sigma_N^2>0$ for $N$ large enough.\\
    Putting everything together, we get
    \begin{align*}
        &\frac{1}{\sqrt{n\sigma^2}}\sum_{i=1}^n\xi_{Y_i}^{(N)}\underset{n\to\infty}{\overset{d}{\longrightarrow}}N\left(0,\frac{\sigma^{2}_N}{\sigma^2}\right) \qquad \forall N \text{ large},\\
        &N\left(0,\frac{\sigma^{2}_N}{\sigma^2}\right)\underset{N\to\infty}{\overset{d}{\longrightarrow}}N(0,1),\\
        &\sup\limits_{n\in\mathbb{N}}\left|\left|\frac{1}{\sqrt{n}}\sum_{i=1}^n\chi_{Y_i}^{(N)}\right|\right|_{L_2}^2 \underset{N\to\infty}{\longrightarrow}0.
    \end{align*}
    This yields for $\varepsilon>0$, using Chebyshev's inequality
    \begin{align*}
        &\lim\limits_{N\to\infty}\limsup\limits_{n\to\infty}P\left(\left|\frac{1}{\sqrt{n\sigma^2}}\sum_{i=1}^n\xi_{Y_i}^{(N)}-\frac{1}{\sqrt{n\sigma^2}}\sum_{i=1}^n\xi_{Y_i}\right|>\varepsilon\right)\\
        \leq& \frac{1}{\varepsilon^2\sigma^2}\lim\limits_{N\to\infty} \sup\limits_{n\in\mathbb{N}} \left|\left|\frac{1}{\sqrt{n}}\sum_{i=1}^n\chi_{Y_i}^{(N)}\right|\right|_{L_2}^2=0.
    \end{align*}
    Using the generalised Lemma of Slutsky (see e.g. \cite[Theorem 6.3.9]{BrockwellDavis1991}) gives us that   
    \begin{align*}
        \frac{1}{\sqrt{n\sigma^2}}\sum_{k=1}^n\xi_{Y_k}\underset{n\to\infty}{\longrightarrow}N(0,1),
    \end{align*}
    which finishes the proof in the positive time case.\\
    For the negative time case we define again $\widetilde{X}$ as in the proof of Theorem \ref{LLN_sampled}. Thus $X_{Y_{-k}}-f_X(Y_{-k})\overset{a.s.}{=}X_{-Y'_k}-f_X(-Y'_k)=\widetilde{X}_{Y'_k}-f_{\widetilde{X}}(Y_k')$. It is again clear that the assumptions of Theorem \ref{CLT_sampled} are satisfied for $\widetilde{X}$ and $Y'$. All that is left to prove is now that $\gamma_Z=\gamma_{\widetilde{Z}}$, with $\widetilde{Z}$ being the stationary limit process from Theorem \ref{Theorem_stat_Z} applied to $\widetilde{\xi}_t:=\widetilde{X}_t-f_{\widetilde{X}}(t)$. \\
    With Theorem \ref{Theorem_stat_Z} we know that
    \begin{align*}
        (\xi_{Y_{-g}}, \xi_{Y_{-(g+h)}})\underset{g\to\infty}{\overset{d}{\longrightarrow}}(Z_h,Z_0) \text{ as well as }(\widetilde{\xi}_{Y_{g}'}, \widetilde{\xi}_{Y_{(g+h)}'})\underset{g\to\infty}{\overset{d}{\longrightarrow}}(\widetilde{Z}_0,\widetilde{Z}_h).
    \end{align*}
    Since $(\widetilde{\xi}_{Y_{g}'}, \widetilde{\xi}_{Y_{(g+h)}'})=(\xi_{Y_{-g}}, \xi_{Y_{-(g+h)}})$, we get $(Z_h,Z_0)\overset{d}{=}(\widetilde{Z}_0, \widetilde{Z}_h)$, which gives $\gamma_Z=\gamma_{\widetilde{Z}}$.
\end{proof}
\noindent
Let $X=(X_n)_{n\in\mathbb{N}}$ be a sequence of random variables on the probability space $(\Omega, \mathcal{F}, P)$ and let $\mathcal{G}_1, \mathcal{G}_2 \subset \mathcal{F}$ be sub-$\sigma$-algebras of $\mathcal{F}$. Define $\alpha(\mathcal{G}_1,\mathcal{G}_2)=\sup\{|P(G_1\cap G_2)-P(G_1)P(G_2)|: G_1\in \mathcal{G}_1, G_2\in\mathcal{G}_2\}$. The sequence $X$ is called \emph{strongly mixing}, if 
\begin{align*}
    \alpha_X(h):=\sup\limits_{n\in\mathbb{N}}\alpha(\sigma(X_j: j\leq n), \sigma(X_j:j\geq n+h))\underset{h\to\infty}{\longrightarrow}0
\end{align*} 
In \cite[Proposition 2.2]{BrandesCurato2019} it was shown that \emph{strictly stationary} $m$-dependent processes are strongly mixing with exponentially decaying mixing coefficients, after random sampling. This result is then used to prove, among other results, that a central limit theorem holds for the sampled process (see \cite[Theorem 3.2]{BrandesCurato2019}). Since plenty of central limits theorems for strongly mixing non-stationary sequences exist (see e.g \cite{Herrndorf1985}), one might be inclined to use a similar route in order to prove Theorem \ref{CLT_sampled}. However in the case of Theorem \ref{CLT_sampled}, the sampled process $X_Y$ does not need to be strongly mixing, as the following example will demonstrate.
\begin{example}
    Define for $t\in\mathbb{R}$
    \begin{align*}
        X_t:= \cos(2\pi t)Z_t, \qquad Z_t=\begin{cases}
            1 & \text{ if } W_{t+1}-W_t\geq 0,\\
            -1 & \text{ if } W_{t+1}-W_t<0,
        \end{cases}
    \end{align*}
    with $(W_t)_{t\in\mathbb{R}}$ being a standard Brownian motion. We further define for $k\in\mathbb{N}$
    \begin{align*}
        Y_k=\sum_{i=1}^k\psi_i, \qquad P(\psi_i=2)=P(\psi_i=3)=P(\psi_i=2+\sqrt{2})=\frac{1}{3} \quad \forall i\in\mathbb{N},
    \end{align*}
    with $(\psi_i)_{i\in\mathbb{N}}$ being an independent sequence, chosen independent of $X$.
    Since $Z=(Z_t)_{t\in\mathbb{R}}$ is strictly stationary and only depends on the increments of $[t,t+1]$, we immediately get that $X=(X_t)_{t\in\mathbb{R}}$ is $1$-dependent. The finite dimensional distributions of $X$ are furthermore $1$-periodic, so $X$ is almost periodic. Since $\mathbb{E}(Z_t)=0$, we get $\mathbb{E}(X_t)=0$ and since $X$ is obviously bounded, it is also $L_2$-uniformly integrable. $X$ therefore satisfies all assumptions of Theorem \ref{CLT_sampled}. \\
    Since $Y$ obviously induces a random sampling in the case $\mathbb{G}=\mathbb{Z}$ and since $\mathcal{L}(Y_1)$ is not a lattice distribution, we get (see the comment after (\ref{eq_lattice_sampling})) that $|\widehat{\mathcal{L}}(Y_1)(z)|\ne1$ for all $z\ne 0$. Hence $Y$ satisfies all assumptions of Theorem \ref{CLT_sampled}.\\ 
    We now prove $\alpha_{X_Y}(h)\geq \frac{1}{4}$ for all $h\in\mathbb{N}$, with $\alpha_{X_Y}$ being the strong mixing coefficient of $X_Y$. Define $M_k:=\sum_{i=1}^k\One_{\{\psi_i=3\}}$ and $N_k:=\sum_{i=1}^k\One_{\{\psi_i=2+\sqrt2\}}$. Then 
    \begin{align*}
        Y_k=2k+M_k+\sqrt{2}N_k
    \end{align*}
    and thus $X_{Y_k}^2=\cos^2(2\pi Y_k)=\cos^2(2\pi \sqrt{2}N_k)$, since $2k+M_k$ is a natural number. Furthermore the map $\mathbb{N}_0 \ni n\mapsto\cos^2(2\pi \sqrt{2}n)$ is injective. To see this assume $\cos^2(2\pi \sqrt{2}n)=\cos^2(2\pi \sqrt{2}m)$. Using the identity $2\cos^2(x)-1=\cos(2x)$ we get $\cos(4\pi \sqrt{2} n)=\cos(4\pi \sqrt{2} m)$, which yields $2\sqrt{2}(n+m)\in\mathbb{Z}$ or $2\sqrt{2}(n-m)\in\mathbb{Z}$. In both cases we obtain $n=m$. \\
    Now for $n\in\mathbb{N}_0$, $\{N_k=n\}=\{X_{Y_k}^2=\cos^2(2\pi\sqrt{2}n)\}$, which yields $\sigma(N_k)\subset \sigma(X_{Y_k})$. Define now for $h,k\in\mathbb{N}$
    \begin{align*}
        A_k:=\left\{N_k\leq \frac{k}{3}\right\}, \qquad B_{k,h}:=\left\{N_{k+h}\leq \frac{k}{3}\right\},
    \end{align*}
    then $A_k\in \sigma(X_{Y_k})\subset \sigma(X_{X_{Y_j}}:j\leq k)$, $B_{k,h}\in \sigma(X_{Y_{k+h}})\subset\sigma(X_{Y_j}:j\geq k+h)$ and $B_{k,h}\subset A_k$. Since $N_k\sim \text{Bin}\left(k, \frac{1}{3}\right)$, we get with the central limit theorem $\frac{N_k-\frac{k}{3}}{\sqrt{\frac{2k}{9}}}\underset{k\to\infty}{\overset{d}{\longrightarrow}}N(0,1)$. Thus
    \begin{align*}
        P(A_k)=P\left(\frac{N_k-\frac{k}{3}}{\sqrt{\frac{2k}{9}}}\leq 0\right) \underset{k\to\infty}{\longrightarrow}\frac{1}{2}
    \end{align*}
    and
    \begin{align*}
        P(B_{k,h})=P\left(\frac{N_{k+h}-\frac{k+h}{3}}{\sqrt{\frac{2(k+h)}{9}}}\leq \frac{-\frac{h}{3}}{\sqrt{\frac{2(k+h)}{9}}}\right) \underset{k\to\infty}{\longrightarrow}\frac{1}{2}.
    \end{align*}
    We can therefore estimate for $h\in\mathbb{N}$
    \begin{align*}
        \alpha_{X_Y}(h)\geq |P(A_k\cap B_{k,h})-P(A_k)P(B_{k,h})|=|P(B_{k,h})-P(A_k)P(B_{k,h})|\underset{k\to\infty}{\longrightarrow}\frac{1}{4}.
    \end{align*}
    The sequence $(X_{Y_k})_{k\in\mathbb{N}}$ can therefore not be strongly mixing.
\end{example}
\section*{Acknowledgements}
The authors are grateful to Alexander Lindner for his support, insightful comments, and many fruitful discussions, which have substantially improved both the content and the presentation of this paper.

\end{document}